\def\bptnote#1{}
\newcolumntype{d}[1]{D{.}{.}{#1}}
\newcolumntype{k}[1]{D{,}{}{#1}}
\newcommand{\midd}{\|}
\newtheorem{theorem}{Theorem}[section]
\newtheorem{lemma}[theorem]{Lemma}
\newcommand{\D}{\mathsf{D}}
\newcommand{\const}{D}
\newcommand{\ER}{\mathsf{R}}
\newcommand{\Cov}{\operatorname{Cov}}
\newcommand{\Fs}{\mathsf{F}}
\newcommand{\de}{{d}}
\newcommand{\cX}{{\mathcal X}}
\newcommand{\cF}{{\mathcal F}}
\newcommand{\J}{{\mathcal J}}
\newcommand{\K}{\mathsf{K}}
\newcommand{\hK}{\widehat{\mathsf K}}
\newcommand{\tK}{\widetilde{\mathsf K}}
\newcommand{\Ball}{\mathsf{B}}
\newcommand{\Tree}{\mathsf{T}}
\newcommand{\rroot}{{\o}}
\newcommand{\droot}{{\partial\o}}
\newcommand{\hEv}{\widehat{\mathsf E}}
\newcommand{\hEvA}{\widehat{\mathsf A}}
\newcommand{\hEvC}{\widehat{\mathsf C}}
\newcommand{\Ev}{\mathsf{E}}
\newcommand{\EvA}{\mathsf{A}}
\newcommand{\EvC}{\mathsf{C}}
\newcommand{\sign}{\operatorname{sign}}
\newcommand{\G}{\mathcal{G}}
\newcommand{\HH}{\mathcal{H}}
\newcommand{\V}{\mathcal{V}}
\newcommand{\Ed}{\mathcal{E}}
\newcommand{\I}{\mathcal{I}}
\newcommand{\SSS}{\mathcal{S}}
\newcommand{\Sh}{\widehat{\mathcal{S}}}
\newcommand{\W}{\mathcal{W}}
\newcommand{\us}{\underline{\sigma}\hspace*{0.2pt}}
\newcommand{\A}{\mathcal{A}}
\newcommand{\up}{\underline{+1}}
\newcommand{\prob}{{\mathbb P}}
\newcommand{\qprob}{{\mathbb Q}}
\newcommand{\Ex}{{\mathbb E}}
\newcommand{\E}{{\mathbb E}}
\newcommand{\reals}{{\mathbb R}}
\newcommand{\naturals}{{\mathbb N}}
\newcommand{\R}{\mathcal{R}}
\newcommand{\Z}{\mathbb{Z}}
\newcommand{\ind}{{\mathbb I}}
\newcommand{\Psih}{\widehat{\Psi}}
\newcommand{\di}{\partial i}
\newcommand{\ve}{\varepsilon}
\newcommand{\C}{\mathcal{C}}
\newcommand{\T}{\mathcal{T}}
\newcommand{\traj}{\lambda}
\newcommand{\bias}{\beta}
\newcommand{\coinset}{\bolds\omega}
\begin{document}
\begin{frontmatter}

\title{Majority dynamics on trees and the dynamic cavity
method\thanksref{T1}}
\runtitle{Majority dynamics on trees}

\thankstext{T1}{Supported in part by a Terman fellowship, the NSF
CAREER Award CCF-0743978
and the NSF Grant DMS-08-06211.}

\begin{aug}
\author[A]{\fnms{Yashodhan} \snm{Kanoria}\thanksref{t2}\ead[label=e1]{ykanoria@stanford.edu}} and
\author[B]{\fnms{Andrea} \snm{Montanari}\corref{}\ead[label=e2]{montanari@stanford.edu}}
\runauthor{Y. Kanoria and A. Montanari}
\affiliation{Stanford University}
\address[A]{Department of Electrical Engineering\\
Stanford University\\
350 Serra Mall\\
Stanford, California 94305-9505\\
USA\\
\printead{e1}} 
\address[B]{Department of Electrical Engineering\\
\quad and Department of Statistics\\
Stanford University\\
350 Serra Mall\\
Stanford, California 94305-9505\\
USA\\
\printead{e2}}
\end{aug}

\thankstext{t2}{Supported by a 3Com Corporation Stanford Graduate
Fellowship.}

\received{\smonth{5} \syear{2010}}

%
\begin{abstract}
A voter sits on each vertex of an infinite tree of degree
$k$, and has to decide between two alternative opinions. At each time step,
each voter switches to the opinion of the majority of her neighbors.
We analyze this majority process when opinions are initialized to
independent and identically distributed random variables.

In particular, we bound the threshold value of the initial
bias such that the process converges to consensus.
In order to prove an upper bound,
we characterize the process of a
single node in the large $k$-limit. This approach is inspired by
the theory of mean field spin-glass and can potentially
be generalized to a wider class of models. We also derive a lower
bound that is nontrivial for small, odd values of $k$.
\end{abstract}

%
\begin{keyword}[class=AMS]
\kwd[Primary ]{60K35}
\kwd{82C22}
\kwd[; secondary ]{05C05}
\kwd{91A12}
\kwd{91A26}
\kwd{91D99}
\kwd{93A14}.
\end{keyword}
\begin{keyword}
\kwd{Majority dynamics}
\kwd{dynamic cavity method}
\kwd{trees}
\kwd{consensus}
\kwd{social learning}
\kwd{Ising spin dynamics}
\kwd{parallel/synchronous dynamics}
\kwd{best response dynamics}.
\end{keyword}

\end{frontmatter}

\section{Introduction}\label{intro}

\subsection{The majority process}

Consider a graph $\G$ with vertex set $\V$,
and edge set~$\Ed$.
In the following, we shall denote by $\di$ the set of neighbors of
$i\in\V$,
and assume $|\di|<\infty$ (i.e., $\G$ is locally finite).
To each vertex $i \in\V$, we assign
an initial spin $\sigma_i(0) \in\{-1,+1\}$.
The vector of all initial spins is denoted by $\us(0)$.
Configuration $\us(t) = \{\sigma_i(t)\dvtx i\in\V\}$ at subsequent times
$t= 1, 2,\ldots$ are determined according to the following
majority update rule.
If $\partial i$ is the set of
neighbors of node $i \in\mathcal{V}$, we let
%
%
\begin{equation}\label{eq:MajorityRule}
\sigma_i(t+1) = \sign\biggl( \sum_{j \in\partial i} \sigma_j(t)\biggr),
\end{equation}
when $ \sum_{j \in\partial i} \sigma_j(t)\neq0$. If
$\sum_{j \in\partial i} \sigma_j(t) = 0$, then we let
%
%
\begin{equation}\label{eq:RandomUpdate}
\sigma_i(t+1) = \cases{
\sigma_i(t), &\quad with probability $1/2$,\cr
-\sigma_i(t), &\quad with probability $1/2$.}
\end{equation}
In order to construct this process, we associate to each vertex $i\in
\V$,
a sequence of i.i.d. Bernoulli$(1/2)$ random variables
$\omega_i = \{\omega_{i,0},\omega_{i,1},\omega_{i,2},\ldots\}$,
whereby $\omega_{i,t}$ is used
to break the (eventual) tie at time $t$.
A realization of the process is then determined by the triple $(\G
,\coinset,\us(0))$,
with $\coinset=\{\omega_i\}$.

In this work, we will study the asymptotic dynamic of this process
when $\G$ is an infinite regular tree of degree $k\ge2$.
Let $\prob_{\theta}$ be the law of the majority process
where, in the initial configuration, the spins $\sigma_i(0)$
are i.i.d. with $\prob_{\theta}\{\sigma_i(0)=+1\}=(1+\theta)/2$.
We define the \textit{consensus threshold} as the smallest bias in
the initial condition such that the dynamics converges
to the all $+1$ configuration
%
%
\begin{equation}\label{eq:ThresholdDef}
\theta_{*}(k) = \inf
\Bigl\{\theta\dvtx\prob_{\theta}\Bigl( \lim_{t \rightarrow\infty}
\us(t) = \underline{+1} \Bigr) = 1 \Bigr\}.
\end{equation}
Here convergence to the all-$(+1)$ configuration is understood
to be point-wise. We shall call $\theta_*(k)$ the \textit{consensus threshold}
of the $k$-regular tree.

Two simple observations will be useful in stating our results.

\textit{Monotonicity}.
Denote by $\succeq$ the natural partial ordering between configurations
(i.e., $\us\succeq\us'$ if and only if $\sigma_i\ge\sigma'_i$
for all $i\in\V$).
Then the majority dynamics preserves this
partial ordering. More precisely, given two copies of the
process with initial conditions $\us(0)\succeq\us'(0)$, there exists a
coupling between the two processes
such that $\us(t)\succeq\us'(t)$ for all $t\ge0$.

\textit{Symmetry}. Let $-\us$ denote the configuration obtained by inverting
all the spin values in $\us$. Then two copies of the process with initial
conditions $\us'(0) = -\us(0)$ can be coupled in such a way
that $\us'(t) = -\us(t)$ for all $t\ge0$.

It immediately follows from these properties that
\[
0 \le\theta_{*} (k)\le1 .
\]

In this work, we prove upper and lower bounds on $\theta_*$. The upper
bound follows from an
analysis of the majority process using a new technique that we call the
\textit{dynamic cavity method}.
This technique provides a precise characterization of the spin trajectory,
that is, of the process $\{\sigma_{i}(t)\}_{t\ge0}$ for a given
vertex $i$.
In particular, in the limit of large degree $k$,
this becomes a function of a well-defined Gaussian process.
Among other things, this
characterization will be used to prove that
\[
\theta_*(k)= O(1/k^M) \qquad \mbox{for any }M>0 .
\]
Thus, $\theta_*(k)$
rapidly approaches $0$ with increasing degree $k$. This result is
stated below as Theorem \ref{thm:UpperBound}.

We also prove lower bounds on $\theta_*(k)$
based on the formation of stable structures of
$-1$ spins at time $T$. Such structures, once formed, persist for all
future times, and hence prevent convergence to $\us(1)$.
These lower bounds $\theta_{\mathrm{lb}}(k,T)$ are nontrivial, that is,
strictly positive for small odd values of $k$
(cf. Table \ref{table:EmpiricalThresh_LB} in Section \ref
{sec:numerical_results}). This result is stated below as Theorem \ref
{thm:LowerBound}.

A significant part of this paper is devoted
to the rigorous development of the dynamic cavity method.
We consider this a key contribution
of this work.
The cavity method has been successful in analysis of probabilistic
models having locally
tree structured graphs \cite{SpinGlass,Talabook,DemboBrazil}.
The basic idea of this method
is to remove a node from the graph thus forming a ``cavity.''
One then assumes that the behavior of the other nodes
(surrounding the cavity) is known.
The removed node is then put back in to derive a dynamic-programming type
recursion.

Here, we show how to
extend this method to the study of a stochastic process on a tree-like graph,
specifically
the majority process.
In this setting, the cavity recursion can be
interpreted as an inductive procedure with respect to time $t$.
We ``fix''
the behavior of a selected vertex $i$ up to time $t$,
obtain a consistent characterization of its ``environment''
up to the same time $t$. From this, we can compute
the probability distribution of the trajectory
$\{\sigma_{i}(t')\}_{0\le t'\le t+1}$ up
to time $t+1$. The cavity recursion determines completely the
distribution of the
the spin trajectory at an arbitrary node, although in implicit form.

In order to analyze the cavity recursion, we consider the
large $k$ regime. However, since we want to study the
decay of $\theta_*(k)$ with $k$, we cannot rely on generic tools
and need to carry out an accurate probability calculation.
In order to achieve this goal, we establish a convenient form
of the local central limit theorem for binary random vectors.
We use this central limit theorem to ``solve'' the cavity recursion for
large $k$. The solution is given by
a ``cavity process'' that can be defined explicitly in terms of an appropriate
Gaussian process.
%
%
\subsection{Preliminary remarks}

It is not too difficult to show that $\theta_{*}(k)<1$ for all~$k$.
The majority process is related to the simpler process of bootstrap
percolation \mbox{\cite{PeresCore,FontesTree}}. The next lemma formalizes
this connection, yielding a nontrivial upper bound on $\theta_{*}(k)$.
\begin{lemma}\label{lemma:LessThanOne}
For all $k\ge3$, denote by $\rho_{\mathrm{c}}(k)$ the threshold density
for the appearance of an infinite cluster of occupied vertices in
bootstrap percolation with threshold $\lfloor(k+1)/2\rfloor$.
Then
%
%
\begin{equation}
\theta_{*}(k) \le\theta_{\mathrm{u}}(k) \equiv1-2\rho_{\mathrm
{c}}(k) <1 .
\end{equation}
\end{lemma}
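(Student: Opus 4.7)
My plan is to couple the majority process with a bootstrap percolation that dominates the set of $-1$ vertices at every time step. Set $A(0)=\{i\in\V:\sigma_i(0)=-1\}$, which has density $(1-\theta)/2$, and evolve $A(t+1)=A(t)\cup\{i:|\partial i\cap A(t)|\ge r\}$ with $r=\lfloor(k+1)/2\rfloor$; write $A_\infty=\bigcup_{t\ge 0}A(t)$. This is exactly bootstrap percolation with threshold $r$ starting from density $(1-\theta)/2$.

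The first step is to prove by induction on $t$ the containment $\{i:\sigma_i(t)=-1\}\subseteq A(t)$. If $\sigma_i(t+1)=-1$, then either $\sum_{j\in\partial i}\sigma_j(t)<0$, forcing strictly more than $k/2$ (hence at least $r$) neighbors to be $-1$, or $\sum_{j\in\partial i}\sigma_j(t)=0$ (only possible for even $k$), in which case exactly $k/2=r$ neighbors are $-1$. By the inductive hypothesis these $-1$ neighbors all lie in $A(t)$, so $i\in A(t+1)$. This step cleanly absorbs the non-monotonicity of majority (a $+1$ can flip via a tie) into the monotone bootstrap dominator.

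The second step invokes the hypothesis $\theta>1-2\rho_{\rm c}(k)$: the initial $-1$ density $(1-\theta)/2<\rho_{\rm c}(k)$ lies below the bootstrap threshold, so almost surely $A_\infty$ has no infinite cluster, and every connected component $C$ of $A_\infty$ is a finite subtree of the $k$-regular tree. Vertices outside $A_\infty$ are permanently $+1$ by the containment.

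The third, deterministic step shows that inside each finite $C$ the majority dynamics drives all spins to $+1$, by a peel-from-the-leaves induction. Root $C$ at an arbitrary vertex and let $h(v)$ denote the height of $v\in C$ in this rooted subtree; I claim $\sigma_v(t)=+1$ for all $t\ge h(v)+1$. A leaf ($h=0$) has all but at most one tree-neighbor outside $A_\infty$, hence permanently $+1$; it sees at most one $-1$ neighbor, which is below $r\ge 2$ (using $k\ge 3$), and is forced to $+1$ from time $1$ onwards. For an interior vertex of height $h\ge 1$, the inductive hypothesis forces all $C$-descendants to $+1$ from time $h$ on, leaving only the $C$-parent as a possible $-1$ neighbor, again fewer than $r$. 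Combining the three steps gives $\us(t)\to\underline{+1}$ pointwise a.s., hence $\theta\ge\theta_*(k)$, and taking the infimum yields $\theta_*(k)\le 1-2\rho_{\rm c}(k)$. The strict bound $\rho_{\rm c}(k)>0$ (so that $\theta_{\rm u}(k)<1$) is standard for bootstrap percolation with threshold $r\ge 2$ on a $k$-regular tree, provable by a first-moment / branching-process argument at low initial density. The main subtlety to verify is precisely step one — handling ties in a way that still lands in the monotone bootstrap — after which step three never needs monotonicity of the majority itself.
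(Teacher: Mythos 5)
Your proposal is correct and follows essentially the same route as the paper's proof: dominate the set of $-1$ spins by bootstrap percolation with threshold $\lfloor(k+1)/2\rfloor$, invoke the Fontes--Schonmann result that below $\rho_{\rm c}(k)$ the bootstrap closure is a.s.\ a union of finite subtrees, and then show deterministically that each finite $-1$ island surrounded by permanent $+1$'s is eliminated. The only cosmetic differences are that the paper carries out the domination via the complementary positive $q$-core $\G_{+,q}$ plus a monotonicity coupling, where you instead prove the containment $\{i:\sigma_i(t)=-1\}\subseteq A(t)$ by direct induction (with a careful check that ties still land in the bootstrap dominator), and your finite-island cleanup is an induction on height in a rooted subtree rather than the paper's ``at least one vertex quits per iteration'' count.
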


This result follows from the fact that if the initial $-1$'s cannot
form an infinite structure
under bootstrap percolation, then they eventually all disappear under
the majority dynamics.
We defer a full proof of this lemma to the Appendix \ref{app:prelim}.

A numerical evaluation of this upper bound \cite{FontesTree} yields
$\theta_{\mathrm{u}}(5) \approx0.670$,
$\theta_{\mathrm{u}}(6) \approx0.774$,
$\theta_{\mathrm{u}}(7) \approx0.600$.
It is possible to show that
$\theta_{\mathrm{u}}(k)=O(\sqrt{(\log k)/k})$. It turns out that
this is far from being the correct $k$-dependence.
We will prove a much tighter bound in Theorem \ref{thm:UpperBound}.

The next lemma simplifies the task of proving upper bounds on
$\theta_*(k)$ for large~$k$, by showing that it is sufficient to prove
$\E_{\theta}\{\sigma_i(t)\}> 1-\delta_* /k$ for some constant
$\delta_*$ to conclude $\us(t)\to\up$.
\begin{lemma}\label{lemma:Local}
Assume $\G$ to be the regular tree of degree $k$.
There exists $k_*,\delta_*>0$ such that for $k\ge k_*$,
if $\E_{\theta}\{\sigma_i(t)\}> 1-(\delta_*/k)$, then
$\theta_*(k)\le\theta$.
\end{lemma}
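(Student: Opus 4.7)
The plan is to show that, starting from the configuration $\us(t)$ at the time $t$ supplied by the hypothesis, the continuation of the majority process converges almost surely to $\up$. Since $\us(s)$ for $s\ge t$ is a deterministic function of $\us(t)$ and the tie bits with time index $\ge t$, this yields $\prob_\theta(\lim_{s\to\infty}\us(s)=\up)=1$ and hence $\theta_*(k)\le\theta$. The strategy is to reduce this convergence to the absence of bootstrap percolation from a sparse, finitely-dependent set of $-1$ sites, and then invoke Lemma \ref{lemma:LessThanOne}.

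Let $\Sh:=\{i\in\V:\sigma_i(t)=-1\}$, so the hypothesis gives $\prob_\theta(i\in\Sh)\le q:=\delta_*/(2k)$. Define the monotone bootstrap percolation with threshold $r:=\lfloor(k+1)/2\rfloor$ by
\[
\widehat{T}(0):=\Sh,\qquad \widehat{T}(s+1):=\widehat{T}(s)\cup\bigl\{i\in\V:\,|\di\cap\widehat{T}(s)|\ge r\bigr\},
\]
and set $\widehat{T}(\infty):=\bigcup_{s\ge 0}\widehat{T}(s)$. A straightforward induction on $s$, using the majority update rule with worst-case tie-breaking, gives $\{i:\sigma_i(t+s)=-1\}\subseteq\widehat{T}(s)$ for every $s\ge 0$: if $\sigma_i(t+s+1)=-1$, then (whether via strict majority or via an unfavourable tie) at least $r$ of the neighbours of $i$ lie in $\{j:\sigma_j(t+s)=-1\}$, which by induction is contained in $\widehat{T}(s)$. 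Therefore $\{i\notin\widehat{T}(\infty)\}\subseteq\{\sigma_i(t+s)=+1\text{ for all }s\ge 0\}$, and it suffices to show $\prob_\theta(i\in\widehat{T}(\infty))=0$ for every $i\in\V$.

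Two structural properties of $\Sh$ make this tractable. First, since $\sigma_i(t)$ is a deterministic function of the initial spins and tie bits indexed by $\Ball(i,t)$, the set $\Sh$ is translation-invariant under tree automorphisms and $2t$-dependent: $\{i\in\Sh\}$ is independent of $\{j\in\Sh\}$ whenever $d(i,j)>2t$. Second, its single-site marginal is at most $q$. The Liggett-Schonmann-Stacey domination theorem for finitely dependent fields then yields a stochastic domination of $\Sh$ by an i.i.d.\ Bernoulli$(p)$ site measure with $p=p(k,t,\delta_*)\downarrow 0$ as $\delta_*\downarrow 0$. Because bootstrap percolation is monotone in its initial set, $\prob_\theta(i\in\widehat{T}(\infty))\le\prob_{\mathrm{Bern}(p)}(i\in\widehat{T}(\infty))$, and Lemma \ref{lemma:LessThanOne} (combined with the remark that $\theta_{\rm u}(k)=O(\sqrt{(\log k)/k})$, equivalently $\rho_{\rm c}(k)\to 1/2$) guarantees the right-hand side vanishes as soon as $p<\rho_{\rm c}(k)$. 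Taking $k_*$ large enough that $\rho_{\rm c}(k)$ is bounded below for $k\ge k_*$, and $\delta_*$ small enough that $p<\rho_{\rm c}(k)$, completes the argument after a countable union over $i\in\V$.

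The principal obstacle is the $t$-dependence of the dominating density $p$ in the Liggett-Schonmann-Stacey step: the maximum degree of the dependence graph of $\Sh$ on the tree grows like $k^{2t}$, so a naive application produces a $p$ that deteriorates badly with $t$ and does not deliver a single choice of $\delta_*$ that works for every $t$. The likely fix is to bypass the black-box domination and analyze bootstrap percolation from $\Sh$ directly on the tree: the event $i\in\widehat{T}(\infty)$ forces a finite rooted witness subtree at $i$ in which every internal vertex has at least $r$ children inside the subtree and every leaf lies in $\Sh$, and the probability of such a witness can be bounded by a subcritical branching recursion that is uniform in $t$ whenever $\delta_*/(2k)$ is small enough relative to the branching number $\binom{k}{r}$.
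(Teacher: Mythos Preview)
Your bootstrap domination $\{i:\sigma_i(t+s)=-1\}\subseteq\widehat{T}(s)$ is correct, but there is a slip and then a genuine gap. The slip: ``$\prob_\theta(i\in\widehat{T}(\infty))=0$'' is too strong and in fact false, since already $\prob(i\in\Sh)>0$. What you actually need (as in the proof of Lemma~\ref{lemma:LessThanOne}) is that every connected component of $\widehat{T}(\infty)$ is almost surely finite, after which the leaf-by-leaf clearing argument applies; this part is easily repaired. The serious problem is the one you flag yourself: Liggett--Schonmann--Stacey requires the marginal to be small relative to the degree of the dependency graph, here of order $k^{2t}$, so no $\delta_*$ uniform in $t$ survives. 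Your witness-subtree bypass does not obviously help either: the leaves of any witness tree of depth $\le 2t$ all lie within the dependence range of one another, so their $\Sh$-events cannot be decoupled, and a ``subcritical branching recursion uniform in $t$'' presupposes exactly the independence across children subtrees that is absent. You are using only the single-site marginal and $2t$-dependence of $\Sh$, and there is no general principle that prevents such a field from supporting an infinite $r$-bootstrap cluster once $t$ is large.

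The paper sidesteps this entirely by transferring the problem to random $k$-regular graphs $\G_n$. Local weak convergence (Lemma~\ref{lemma:GraphTree}) carries the hypothesis $\E_\theta\{\sigma_i(t_*)\}\ge 1-\delta_*/k$ to $\G_n$; Azuma--Hoeffding (Lemma~\ref{lemma:Concentration}) then gives $\sum_i\sigma_i(t_*)\ge n(1-2\delta_*/k)$ with high probability. The key step is Lemma~\ref{lemma:Attractivity}: on a $(3k/4,\delta_*/k)$ vertex expander, \emph{any} configuration with at most $n\delta_*/k$ minus spins has its number of minuses multiplied by at most $3/4$ after one majority update. Because this contraction is deterministic in the configuration, the correlation structure of $\Sh$ is irrelevant; iterating gives $\prob_{\theta,n}\{\sigma_1(t)=-1\}\le C(3/4)^{t-t_*}$, which transfers back to the tree and finishes by Borel--Cantelli. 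The detour through a finite expander is precisely what buys uniformity in $t$, and is the missing idea in your approach.
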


We use a standard expansion argument to show that such convergence
occurs for typical random graphs
in the configuration
model, and then extend the result to the infinite tree. Again 
the proof can be found in Appendix \ref{app:prelim}.

\subsection{Organization of the paper}
We state our main results in Section \ref{sec:Results}. Section~\ref
{sec:RelWork} surveys related work. We develop the
cavity method and the resulting upper bound in Section \ref
{sec:CavityProcess}. Our lower bound is proved in
Section~\ref{sec:LowerBound}.

%
%
\section{Results}\label{sec:Results}

We can now state our main results. They consist of the following:
\begin{longlist}
\item The exact cavity recursion (Lemma \ref
{lemma:ExactCavity} in Section \ref{sec:ExactCavity}).
\item
Convergence to the cavity process (Theorems \ref{thm:unbiased_convergence}
and \ref{thm:biased_convergence} in
Section~\ref{sec:UB_cavity_inResults}).
\item Upper bound on $\theta_*$ (Theorem
\ref{thm:UpperBound} in Section \ref{sec:UB_cavity_inResults}) as
a consequence of convergence to the cavity process.
\item Lower bound on $\theta_*$ (Theorem \ref
{thm:LowerBound} in Section \ref{sec:LowerBound_inResults}) due
to formation of blocking structures of $-1$'s.
\end{longlist}
Section \ref{sec:numerical_results} contains numerical illustration of
some of our results.

\subsection{The exact cavity recursion}
\label{sec:ExactCavity}

First, we state an exact recursive characterization of spin
trajectories at nodes.
This is the key tool we use in our development of the cavity process.
Moreover, our
lower bound is based on a very similar recursive analysis. 

Let $\G_{\rroot}=(\V_{\rroot},\Ed_{\rroot})$ be the
tree rooted at vertex $\rroot$ with degree $k-1$ at the root and
$k$ at all the other vertices, and let $u= \{u(0),u(1),u(2),\ldots\}$
be an arbitrary sequence of real numbers.
We define a modified Markov chain
over spins $\{\sigma_i\}_{i\in\V_{\rroot}}$ as follows. For $i\neq
\rroot$,
$\sigma_i(t)$ is updated according to the rules (\ref
{eq:MajorityRule}) and
(\ref{eq:RandomUpdate}). For the root spin we have instead
%
%
\begin{equation}\label{eq:ModifiedUpdateRoot}
\sigma_{\rroot}(t+1) = \sign\Biggl(\sum_{i=1}^{k-1}\sigma
_i(t)+u(t)\Biggr) ,
\end{equation}
where $1,\ldots,k-1$ denote the neighbors of the root. In the
case $\sum_{i=1}^{k-1}\sigma_i(t)+u(t)=0$, $\sigma_\rroot(t+1)$ is
drawn as in (\ref{eq:RandomUpdate}), that is, uniformly at random.
We will call this
the ``dynamics under external field.''

We will call the sequence $u= \{u(0),u(1),u(2),\ldots\}$
``external field applied at the root.'' We define the notation
$u^T \equiv(u(0), u(1), \ldots, u(T))$ and similarly
$\sigma_i^T \equiv(\sigma_i(0),\sigma_i(1),\ldots,\sigma_i(T))$.
We denote by
$\prob( \sigma_\rroot^T \midd u^T)$ the probability of
observing a trajectory
$\sigma_\rroot^T$
for the root spin under the above dynamics.
Let us stress two elementary facts:
(i) $\prob( \sigma_\rroot^T \midd u^T)$ is not a
conditional probability;
(ii) as implied by the notation, the distribution of
$\sigma_\rroot^T$ does not depend on $u(t)$, $t>T$
[and indeed does not depend on $u(T)$ either, but we include it for
notational convenience].

As before, we assume that in the initial configuration, the spins
are i.i.d. Bernoulli random variables, and denote by $\prob_0(\sigma_i(0))$
their common distribution.
\begin{lemma}
\label{lemma:ExactCavity}
The following recursion holds
%
%
\begin{eqnarray}
\label{eq:Recursion}
\prob(\sigma_\rroot^{T+1}\midd u^{T+1}) &=& \prob_0(\sigma
_\rroot(0))
\sum_{\sigma_1^{T}\cdots\sigma_{k-1}^{T}}
\prod_{t=0}^T
\K_{u(t)}\bigl(\sigma_{\rroot}(t+1)|\sigma_{\partial\rroot
}(t)\bigr)\nonumber\\[-8pt]\\[-8pt]
&&\hphantom{\prob_0\bigl(\sigma
_\rroot(0)\bigr)
\sum_{\sigma_1^{T}\cdots\sigma_{k-1}^{T}}}
{}\times
\prod_{i=1}^{k-1}\prob(\sigma_i^{T}\midd \sigma_\rroot^{T}
) ,
\nonumber\\
\label{eq:Kdef}
\K_{u(t)}(\cdots)
&\equiv&\cases{
\displaystyle \ind\Biggl\{\sigma_\rroot(t+1) = \sign\Biggl(\sum_{i=1}^{k-1}\sigma_i(t)+
u(t)\Biggr)\Biggr\}, \vspace*{2pt}\cr
\qquad\hspace*{12.7pt}\mbox{if $\displaystyle \sum_{i=1}^{k-1}\sigma_i(t)+
u(t)\neq0 $},\cr
\dfrac{1}{2}, \qquad \mbox{otherwise}.}
\end{eqnarray}
\end{lemma}

This recursion is based on the following intuition: for ease of
explanation, we will assume $k$ is odd. The situation of the ``child''
nodes $1, 2,
\ldots, k-1$ (and their respective subtrees) with respect to the root
trajectory $\sigma_\rroot^T$ is essentially the same as the situation
of $\rroot$ with respect to the fixed trajectory $u^T$. If the root
trajectory had been ``fixed'' to $\sigma_\rroot^T$, the child
trajectories would have been
i.i.d. according to $\prob(\cdot\midd\sigma_\rroot^T)$.
However, the root trajectory $\sigma_\rroot^T$ is itself a function
of the child trajectories $\sigma_1^{T}, \ldots, \sigma_{k-1}^{T}$
and $u^T$, instead of being a fixed trajectory.
Thus we sum over the product of terms $\prob(\sigma_i^{T}\midd
\sigma_\rroot^{T})$ multiplied by a ``consistency'' indicator
$\prod_{t=0}^T
\K_{u(t)}(\sigma_{\rroot}(t+1)|\sigma_{\partial\rroot
}(t))$. The term $\prob_0(\sigma_\rroot(0))$ appears for
obvious reasons.

We provide a rigorous proof of Lemma \ref{lemma:ExactCavity} in
Appendix \ref{app:ExactCavityRec}.

The same proof applies, in fact, to quite a general class
of processes on the regular rooted tree $\G_{\rroot}$.
More precisely, consider a model with spins taking value in a finite domain
$\sigma_i(t)\in\cX$, and are updated in parallel according to
the rule (for $i\neq\rroot$)
%
%
\begin{equation}\label{eq:GeneralUpdate}
\sigma_i(t+1) = f\bigl(\sigma_i(t),
\us_{\di\setminus\pi(i)}(t),\sigma_{\pi(i)}(t),\omega_{i,t}\bigr),
\end{equation}
where $\pi(i)$ is the parent of node $i$ (i.e., the only
neighbor of $i$ that is closer to the root) and $\{\omega_{i,t}\}$
are a collection
of i.i.d. random variables. For the root $\rroot$, the above rule is modified
by replacing $\sigma_{\pi(i)}(t)$ by the arbitrary quantity $u(t)$.

The next remark follows from a verbatim repetition of our proof of
Lemma \ref{lemma:ExactCavity} (in Appendix \ref{app:ExactCavityRec}).
\begin{remark}
\label{rem:dynamic_cavity_general_update}
For a model with general update rule (\ref{eq:GeneralUpdate}),
the distribution of the root trajectory satisfies (\ref{eq:Recursion})
with the kernel
%
%
\begin{eqnarray}
&&\K\bigl(\sigma_{\rroot}(t+1)|\sigma_\rroot(t),\sigma_{\partial
\rroot}(t)\bigr)\nonumber\\[-8pt]\\[-8pt]
&&\qquad\equiv
\E_{\omega_{\rroot,t}}
\bigl\{
\ind\bigl(\sigma_{\rroot}(t+1) =
f(\sigma_\rroot(t),
\us_{\partial\rroot}(t),u(t),\omega_{\rroot,t}) \bigr)
\bigr\} .\nonumber
\end{eqnarray}
\end{remark}

\subsection{Upper bounds and the dynamic cavity method}
\label{sec:UB_cavity_inResults}

While for small odd $k$ the consensus threshold is strictly positive,
our next result shows that it approaches $0$ very rapidly as $k\to
\infty$.
\begin{theorem}\label{thm:UpperBound}
The consensus threshold on $k$ regular trees converges to
$0$ as $k\to\infty$ faster than any polynomial. In other words,
for any finite $B>0$ and any $K>0$,
%
%
\begin{equation}
\theta_*(k)\le K k^{-B}
\end{equation}
for $k \geq k_*(B,K)$.
\end{theorem}

%
Fix a vertex $i\in\V$, and consider the process $\{\sigma_i(t)\}
_{t\ge0}$.
The proof of Theorem~\ref{thm:UpperBound} is obtained by developing a
pretty complete characterization of this process in the large $k$ limit.
We first consider the unbiased case (i.e., $\theta=0$) and prove
the convergence of this process $\{\sigma_i(t)\}_{t\ge0}$
to a well-defined limit as $k\to\infty$. We will call this limit the
\textit{cavity process}, for the case of unbiased initialization
(i.e., for
$\theta=0$). We formally define the cavity process below and then state
our result on convergence
to the cavity process.
%
\begin{definition}[(Effective process)]
Let $C = \{C(t,s)\}_{t,s \in\Z_+}$ be a positive definite symmetric matrix,
and $R = \{R(t,s)\}_{ t,s \in\Z_+, t>s}$,
$h = \{h(t)\}_{ t \in\Z_+}$ two arbitrary sets of real numbers.

A sample path of the \textit{effective process} with parameters
$C, R, h$ is generated as follows:
let $\tau(0)$ be a Bernoulli$(1/2)$ random variable and
$\{\eta(t)\}_{t\in\Z_+}$ be jointly Gaussian zero mean
random variables with covariance $C$, independent from $\tau(0)$.
For any $t\ge0$ we let
%
%
\begin{equation}\label{eq:asymp_dyn_def}
\tau(t+1)=\sign\Biggl(\eta(t) + \sum
_{s=0}^{t-1}{R(t,s)\tau(s)} + h(t) \Biggr) .
\end{equation}
\end{definition}

Notice that the distribution of the effective process depends on the three
parameters $C,R,h$. We will denote expectation with respect
to its distribution as $\E_{C,R,h}$. The functions $C( \cdot,
\cdot)$
and $R( \cdot, \cdot)$ will be referred to as \textit
{correlation} and
\textit{response} functions. By convention, we let $R(t,s)=0$ if $t\le s$.
Finally, $h$ is a perturbation
parameter needed to state our definition of the cavity process in terms
of the effective process.
\begin{definition}[(Consistent parameters $C$, $R$)]\label{def:consistency}
Let $C, R$ be parameters in the definition of the effective process. We
say that $C$, $R$ are consistent if they satisfy
%
%
\begin{eqnarray}
\label{eq:cavity_def1}
C(t,s) &= &\E_{C,R,0}[\tau(t)\tau(s)]\qquad
\forall t,s \ge0 ,
\\
\label{eq:cavity_def2}
R(t,s) &= &\frac{\partial
}{\partial h(s)}\E_{C,R,h}[\tau(t)] \bigg|_{h=0}\qquad
\forall0 \le s < t .
\end{eqnarray}
\end{definition}

It is natural to ask whether consistent choices of $C$ and $R$ exist,
and in that case, whether they are unique or not.
This question is addressed in Lemma \ref{lemma:Existence} below,
which proves that \textit{there exist unique
consistent $R$ and $C$}, that is, there is a unique solution
of (\ref{eq:cavity_def1}) and (\ref{eq:cavity_def2}).
In fact, these values are determined recursively.
One starts $C(0,0)=1$ [and indeed $C(t,t)=1$ for all $t$].
This leads to uniquely determined values for $C(1,0)$
and $R(1,0)$, which then determines unique values for $C(2,s)$, $R(2,s)$
and so on.
\begin{definition}[(Cavity process)]
Let $C$, $R$ be the unique consistent parameters (cf. Definition \ref
{def:consistency}) as per Lemma
\ref{lemma:Existence}. The \textit{cavity process} $\{\tau(t)\}
_{t\in
\Z_+}$
defined as the effective process with parameters $C$, $R$ and with $h=0$.
\end{definition}

In the following, we will denote by $\prob_{\mathrm{cav}}$
the law of the cavity process.
Our next theorem establishes convergence of the majority process with
unbiased initialization to the cavity process.
\begin{theorem}
\label{thm:unbiased_convergence}
Consider the majority process on a regular tree of degree $k$
with uniform initialization $\theta=0$.
Then for any $i \in\mathcal{V}$ and any time $T\ge0$, we have
\[
( \sigma_i(0), \sigma_i(1), \ldots, \sigma_i(T) )
\stackrel{d}{\rightarrow} ( \tau(0), \tau(1), \ldots, \tau
(T) ) ,
\]
where $\{\tau(0)\}_{t\ge0}$ is distributed according to the cavity
process and convergence is understood to be in distribution
as $k\to\infty$.
\end{theorem}

Theorem \ref{thm:unbiased_convergence} is proved in Section \ref
{sec:unbiased_convergence}.

Let us describe the intuitive picture which forms the basis of the last theorem.
The trajectory at target node $i$ follows the majority rule in
(\ref{eq:MajorityRule}). The study
of this rule is complicated by the fact that the spins of the
neighboring nodes $\partial i$
at time $t>0$ are not independent of each other. The past trajectory
$\sigma_i^{t-1}$ of target node $i$ affects the
spins of nodes in $\partial i$ at time $t$. The exact recursion
equation (\ref{eq:Recursion}) allows
an analytical treatment despite this dependence. We use a local central
limit theorem (Theorem \ref{thm:sum_approx}
in Section \ref{sec:sum_approx}, proved in Appendix \ref{app:CLT})
on the exact recursion equation (\ref{eq:Recursion}), to show convergence
to the cavity process
inductively in $T$. The response term $\sum_{s=0}^{t-1}{R(t,s)\sigma
(s)}$ captures the effect
of the spin trajectory up to time $t-1$ at the target node, on its
environment at time $t$. The key part of the proof is in Lemma \ref
{lemma:RCconvergence}.

We finally turn to the case of biased initialization $\E_{\theta}\{
\sigma_i(0)
\}=\theta$.
\begin{theorem}
\label{thm:biased_convergence}
For $T_*$ a nonnegative integer and $\bias_0\ge0$,
consider the majority process on a regular tree of degree $k$ with
i.i.d. initialization with bias $\theta= \bias_0/k^{(T_*+1)/2}$.
Then for any $i\in V$ and $T\le T_*$, we have
\[
( \sigma_i(0), \sigma_i(1), \ldots, \sigma_i(T) )
\stackrel{d}{\rightarrow} ( \tau(0), \tau(1), \ldots, \tau
(T) ) ,
\]
where $\{\tau(0)\}_{t\ge0}$ is distributed according to the cavity
process and convergence is understood to be in distribution
as $k\to\infty$.

Further, if $\bias_0>0$, then for any $i\in V$ and $T\ge T_*+2$, we have
%
%
\begin{eqnarray}\label{eq:biased_convergence}
&&( \sigma_i(0), \sigma_i(1), \ldots, \sigma_i(T)
)\nonumber\\[-8pt]\\[-8pt]
&&\qquad
\stackrel{d}{\rightarrow} \bigl( \tau(0), \tau(1), \ldots, \tau
(T_*), \sigma(T_*+1), +1, +1, \ldots, +1 \bigr) ,\nonumber
\end{eqnarray}
where the random variable
$\sigma(T_*+1)$ dominates stochastically
$\tau(T_*+1)$, and $\prob\{\sigma(T_*+1)>\tau(T_*+1)\}$
is strictly positive.

Finally, there exist $\const= \const(\bias_0, T_*)$, with
$\const(\bias_0,T_*)>0$ for $\bias_0>0$ such that, for any $T\ge T_*+2$,
%
%
\begin{equation}\label{eq:BoundMagn}
\E_{\theta}\{\sigma_i(T)\} \ge1-e^{-\const(\bias_0,T_*)k}.
\end{equation}
\end{theorem}

Theorem \ref{thm:biased_convergence} is proved in Section \ref
{sec:biased_convergence}.
Theorem \ref{thm:UpperBound} is an immediate corollary of this general result.
\begin{pf*}{Proof of Theorem \ref{thm:UpperBound}}
Choose $T_*=\lceil2B \rceil$ and $\bias_0=K$ in Theorem \ref
{thm:biased_convergence}
and use (\ref{eq:BoundMagn}) to check the assumptions of
Lemma \ref{lemma:Local}, whereby for $t\ge T_*+2$, it is sufficient to
take $k\ge k_*$ such that $\delta_*/k\ge e^{-\const(\bias_0,T_*) k}$.
\end{pf*}

Clearly, Theorem \ref{thm:unbiased_convergence} is a special case of
Theorem \ref{thm:biased_convergence}
(just take $T_*$ large enough and $\bias_0=0$).
However our proof proceeds by first analyzing the unbiased
case $\theta=0$, and then turning to the biased one $\theta>0$.
The latter is treated by establishing a delicate relationship
between processes with biased and unbiased initializations,
derived in Lemmas \ref{lemma:qminusp}
and \ref{lemma:qminusptstar}. In the unbiased case, one has
$\E\{\sigma_i(t)\} = 0$ by symmetry at all times.
In the biased case, we will prove a quantitative estimate of how the
fraction of $+1$ spins evolves with time. Let $\theta_t = \E[\sigma
_i(t)]$ for an arbitrary node
$i$. If $\theta_t = O(k^{-1})$ we obtain $\theta_{t+1} = \sqrt{k}
\theta_t R(t+1, t) (1 + o(1))$.
In words, as long as the fraction of $+1$ spins is small enough,
it gets multiplied at each step by a factor of order $\sqrt{k}$.
By iterating this procedure with $\theta_0=\bias_0/k^{(T_*+1)/2}$,
we get $\theta_{T_*} = \Theta(k^{-1/2})$
and $\theta_{T_*+1}= \Theta(1)$.
At the next iteration, the fraction of $+1$ spins
approaches $1$ and the bias saturates to $\theta_{T_*+2}=1-e^{-\Theta(k)}$.

Theorem \ref{thm:biased_convergence} also implies that, for large
degree trees,
consensus to majority takes place very abruptly.
Indeed the bias toward $+1$ passes from $k^{-1/2}$
(at $t= T_*$) to $1-e^{-\Theta(k)}$ (at $t= T_*+2$) in $2$ iterations.
Numerical illustrations of this phenomenon are provided in
Section \ref{sec:numerical_results},
specifically Figures \ref{fig:tstar1} and~\ref{fig:tstar3}.
%
%
\subsection{Lower bounds}
\label{sec:LowerBound_inResults}

We state
a sequence of recursively computable lower bounds. These lower bounds
are based on the formation of ``stable'' structures of $-1$'s. Once
formed, these stable structures persist for all future times, hence
preventing the system from reach the $\underline{+1}$ consensus. A key
issue we overcome is that such stable structures do not exist at time 0
(w.p. 1) for any $k$ and $\theta>0$. The lower bound $\theta_{\mathrm{lb}}
(k,T)$ stated below is based on the formation w.p. 1 of stable
structures of $-1$'s at time $T$, as a result of majority dynamics up to
that time.
\begin{theorem}\label{thm:LowerBound}
Consider any $T \geq0$. For all $\sigma^T$, $u^T\in\{-1,+1\}^{T+1}$ define
%
%
\begin{eqnarray}\label{eq:initial_psi}
\Psi_{{\mathrm{odd}},T}^0(\sigma_\rroot^T\midd u^T) &=& \prob
(\sigma_\rroot^T\midd u^T) , \nonumber\\[-8pt]\\[-8pt]
\Psi_{{\mathrm{even}},T}^0(\sigma_\rroot^T\midd u^T) &=& \prob
(\sigma_\rroot^T\midd u^T) \ind\bigl(\sigma_\rroot(T)=-1\bigr)
.\nonumber
\end{eqnarray}
Define $\Psi_{{\mathrm{odd}},T}^{d+1}(\sigma_\rroot^T\midd u^T
), \Psi_{{\mathrm{even}},T}^{d+1}(\sigma_\rroot^T\midd u^T)$
for $d\ge0$ recursively as per
%
%
\begin{eqnarray}
\label{eq:Alt_fp1}
&&\Psi_{{\mathrm{odd}},T}^{d+1}(\sigma_\rroot^T\midd u^T)\nonumber\\[-1pt]
&&\qquad=
\prob_0(\sigma_\rroot(0))
\sum_{r=\lceil({k+1})/{2} \rceil-1}^{k-1}\pmatrix
{k-1\cr r} \nonumber\\[-1pt]
&&\qquad\quad{}\times\sum_{\sigma_1^{T}\cdots\sigma_{k-1}^{T}}
\prod_{t=0}^{T-1}
\K_{u(t)}\bigl(\sigma_{\rroot}(t+1)|\sigma_{\partial\rroot
}(t)\bigr) \\[-1pt]
&&\qquad\quad\hphantom{{}\times\sum_{\sigma_1^{T}\cdots\sigma_{k-1}^{T}}}
{}\times \prod_{i=1}^{r}\Psi_{{\mathrm{even}},T}^d(\sigma
_i^{T}\midd \sigma_\rroot^{T})\nonumber\\[-1pt]
&&\qquad\quad\hphantom{{}\times\sum_{\sigma_1^{T}\cdots\sigma_{k-1}^{T}}}
{}\times
\prod_{i=r+1}^{k-1}\{\prob(\sigma_i^{T}\midd \sigma_\rroot
^{T})- \Psi_{{\mathrm{even}},T}^d(\sigma_i^{T}\midd \sigma
_\rroot^{T})\} ,\nonumber
\\[-1pt]
\label{eq:Alt_fp2}
&&\Psi_{{\mathrm{even}},T}^{d+1}(\sigma_\rroot^T\midd u^T) \nonumber\\[-1pt]
&&\qquad=
\ind\bigl(\sigma_\rroot(T)=-1\bigr) \prob_0(\sigma_\rroot(0))
\sum_{r=\lceil({k+1})/{2} \rceil-1}^{k-1}\pmatrix
{k-1\cr r} \nonumber\\[-1pt]
&&\qquad\quad{}\times\sum_{\sigma_1^{T}\cdots\sigma_{k-1}^{T}}
\prod_{t=0}^{T-1}
\K_{u(t)}\bigl(\sigma_{\rroot}(t+1)|\sigma_{\partial\rroot
}(t)\bigr) \\[-1pt]
&&\qquad\quad\hphantom{{}\times\sum_{\sigma_1^{T}\cdots\sigma_{k-1}^{T}}}
{}\times \prod_{i=1}^{r}\Psi_{{\mathrm{odd}},T}^d(\sigma
_i^{T}\midd \sigma_\rroot^{T})\nonumber\\[-1pt]
&&\qquad\quad\hphantom{{}\times\sum_{\sigma_1^{T}\cdots\sigma_{k-1}^{T}}}
{}\times
\prod_{i=r+1}^{k-1}\{\prob(\sigma_i^{T}\midd \sigma_\rroot
^{T})- \Psi_{{\mathrm{odd}},T}^d(\sigma_i^{T}\midd \sigma
_\rroot^{T})\} ,\nonumber
\end{eqnarray}
\begin{eqnarray}
\K_{u(t)}(\cdots)
&\equiv&\cases{
\displaystyle \ind\Biggl\{\sigma_\rroot(t+1) = \sign\Biggl(\sum_{i=1}^{k-1}\sigma_i(t)+
u(t)\Biggr)\Biggr\}, \vspace*{2pt}\cr
\qquad\hspace*{12.96pt} \mbox{if $\displaystyle \sum_{i=1}^{k-1}\sigma_i(t)+
u(t)\neq0 $},\cr
\dfrac{1}{2}, \qquad \mbox{otherwise}.}
\end{eqnarray}
Let $\Psi_{{\mathrm{odd}},T}(\sigma^T \midd u^T ) = \lim
_{d\rightarrow
\infty} \Psi^d_{{\mathrm{odd}},T}(\sigma^T \midd u^T )$.
This limit exists.

Define $\theta_{\mathrm{lb}}(k,T)\equiv\sup\{\theta\in[0,1]\dvtx
{\Psi}_{{\mathrm{odd}},T}(\sigma^T \midd u^T )> 0$ for all
$\sigma^T ,u^T \}$. Then, for every $k, T$
%
%
\begin{equation}
\theta_*(k)\ge\theta_{\mathrm{lb}}(k,T) .
\end{equation}
\end{theorem}

It is obvious that evaluating the lower bound
$\theta_{\mathrm{lb}}(k,T)$ analytically is quite challenging.
An exception is provided by the case $k=3$, where it is not
too hard to show that $\theta_{\mathrm{lb}}(k=3,T=1)>0$.

We will instead evaluate the lower bounds
$\theta_{\mathrm{lb}}(k,T)$ numerically.
The above recursion allows us to do it through a number of operations
(sums and multiplications) of order $2^{k(T+1)}T(T+k)$.
As explained in Section \ref{sec:LowerBound},
the recursion can be considerably simplified by exploiting the symmetries
of the
problem, while remaining exponential in $k$ and $T$.
Evaluating the lower bound for $k=3$, $5$, $7$
and $T=3$, we get $\theta_*(3)> 0.573$,
$\theta_*(5)> 0.052$ and $\theta_*(7)> 0.0080$.
This shows convincingly that $\theta_*(k)>0$ for $k\le7, k$ odd.
%
%
\subsection{Numerical illustration}\label{sec:numerical_results}

The objective of this section is to provide illustrations
of our results, and help to develop some intuition on the majority process.

It is obviously difficult to simulate
the majority dynamics on infinite trees. On the other
hand, the state of any node $i$ after $t$ iterations only depends
on the state of its neighbors in the graph up to distance $t$.
It is natural to consider sequences of finite graphs having an
increasing number of vertices $n$, that converge locally
to trees (in the sense of \cite{AldousSteele}). Random regular graphs drawn
from the configuration model \cite{BollobasConf}
are a natural choice.
A sequence of random $k$ regular graphs does indeed converge to
the regular tree of degree $k$ almost surely (e.g., see \cite{DemboBrazil}).

Moreover, as demonstrated in Lemmas \ref{lemma:GraphTree} and
\ref{lemma:Concentration}, the fraction
of nodes that are $+1$ at time $t$ in the configuration model converges to
the probability in the infinite tree of an arbitrary node being $+1$.

It is worth emphasizing that we are using random regular graphs
as a tool for computing the evolution of the fraction
of $(+1)$'s on the infinite tree. This approach is
supported by Lemmas \ref{lemma:GraphTree} and
\ref{lemma:Concentration}. On the other hand,
we will not attack the problem of defining a consensus
threshold for finite graphs. This indeed requires some care as we briefly
explain for clarity.

The consensus threshold $\theta_*$ is well defined for a
general infinite graph $\G$. If $\G$ is finite,
then trivially $\theta_*(\G)=1$: indeed for any
$\theta<1$ there is a positive probability that $\us(0)$ is the
all $-1$ configurations. However, given a sequence of graphs
with an increasing number of vertices $n$, one can define a
threshold function $\theta_{*,n}(\gamma)$ such that $\us(t)\to
\underline{+1}$
with probability $\gamma$ for $\theta=\theta_{*,n}(\gamma)$.
It is an open question to determine which graph sequences
exhibit a sharp threshold [in the sense that $\theta_{*,n}(\gamma)$
has a limit independent of $\gamma\in(0,1)$ as $n\to\infty$].
It is a natural conjecture that such a sharp threshold does indeed
exist for sequences of random regular graphs.

We carried out numerical simulations with random regular graphs
of degree $k$.\setcounter{footnote}{2}\footnote{We used graphs of size up to $n=5\cdot10^4$,
generated according
to a modified \textit{configuration model}
\cite{BollobasConf}
(with eventual self-edges and
double edges rewired randomly).
The initial bias was implemented by drawing a uniformly random
configuration with $n(1+\theta)/2$ spins $\sigma_i=+1$.}
In this case, there appears empirically
to be a sharp threshold bias that converges, as $n\to\infty$ to a limit
$\theta_{*,\mathrm{rgraph}}(k)$.
Above this threshold, the dynamics converge with high probability
to all $+1$.
Below this threshold, the dynamics converge instead to either a stationary
point or to a length-two cycle
\cite{Olivos}.
Threshold biases found for small values of $k$ are shown in Table~\ref{table:EmpiricalThresh_LB}.

%
\begin{table}
\tablewidth=275pt
\caption{Empirical thresholds $\theta_{*,\mathrm{rgraph}}(k)$ and
computed lower bounds on $\theta_*(k)$} \label{table:EmpiricalThresh_LB}
\begin{tabular*}{\tablewidth}{@{\extracolsep{\fill}}lcd{1.3}@{}}
\hline
$\bolds{k}$ & $\bolds{\theta_{*,\mathrm{rgraph}}(k)}$ & \multicolumn{1}{c@{}}{\textbf{Lower bd on $\bolds{\theta
_*(k)}$ from Theorem \ref{thm:LowerBound}}}\\
\hline
$3$ & $0.58\pm0.01$ & 0.574\\
$4$ & $0.000\pm0.001$ & 0 \\
$5$ & $0.054\pm0.001$ & 0.052\\
$6$ & $0.000\pm0.001$ & 0\\
$7$ & $0.010\pm0.001$ & 0.008\\
\hline
\end{tabular*}
\end{table}

The empirical threshold for the configuration model approaches $0$
rapidly with increasing $k$,
for $k$ odd, and appears to be identically $0$ for all even $k$.
The origin of the odd--even difference lies in the fact that, for $k$
odd, the majority dynamics is deterministic. For $k$ even,
the possibility of ties leads to random choices [cf.
(\ref{eq:RandomUpdate})] thus reducing the chance of blocking structures.
Getting a rigorous understanding of this phenomenon
is an open problem.

For comparison, we have shown above the best lower bound value we could
compute based on Theorem \ref{thm:LowerBound} (combined with the
trivial lower bound of $0$). The lower bounds we have obtained for the
tree process are very close to the empirical thresholds $\theta
_{*,\mathrm{rgraph}}(k)$.
A full table of computed lower bound values is available in Table \ref
{table:lower_bounds}.

%
%
\begin{figure}

\includegraphics{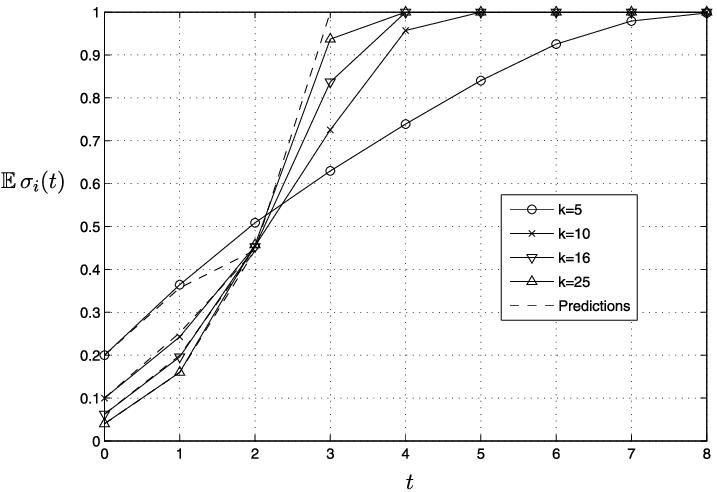}

\caption{Change of bias $\E\sigma_i(t)$ over time $t$, with
with initial bias $\E\sigma_i(0) \equiv\theta= 0.5/k$ (i.e.,
in our notation $T_*=1$, $\bias_0=0.5$).
The ``prediction'' is based on $\bias_1, \ldots, \bias_{T_*}$ computed
according
to (\protect\ref{eq:omega_recursion}) and $\bias_{T_*+1}$ computed
according to the
modified cavity process [see Lemma \protect\ref{lemma:rooted_biased_convergence} and
(\protect\ref{eq:modified_cavity_recursion})].}\label{fig:tstar1}
\end{figure}

%
%
\begin{figure}

\includegraphics{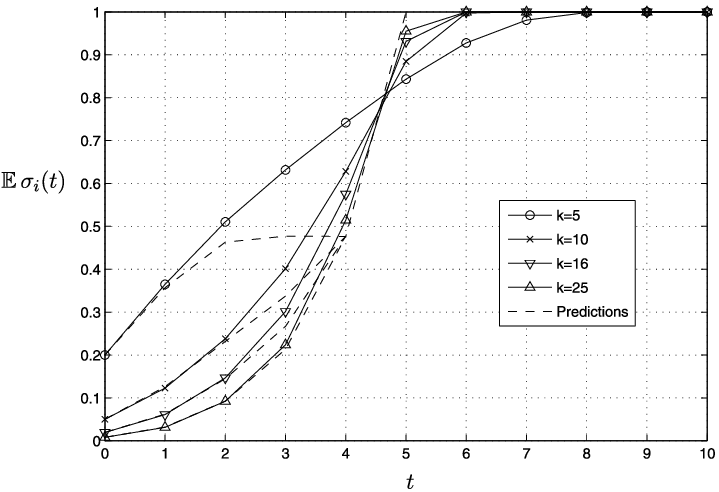}

\caption{Change of bias $\E\sigma_i(t)$ over time $t$, with
initial bias $\theta=\E\sigma_i(0) = 2.5/k^{2}$
(i.e., $T_*=3$, $\bias_0=2.5$).}
\label{fig:tstar3}
\end{figure}

Figures \ref{fig:tstar1} and \ref{fig:tstar3} compare our predictions
for the evolution of $\theta_t$ with the average observed values for
finite values of $k$.
Theorem \ref{thm:biased_convergence} predicts
almost complete consensus is reached sharply at iteration $T_*+2$.
We see that the prediction
provided by our method is quite accurate already for $k\gtrsim15$.
In particular, consensus develops fairly rapidly between iteration $T_*$
and $T_*+2$.
%
%
\section{Related work}
\label{sec:RelWork}

The majority process is a simple example of a stochastic dynamics evolving
according to local rules on a graph. In the last few years,
considerable effort has been devoted to the study
of high-dimensional probability distributions with
an underlying sparse graph structure (e.g., see \cite{IPC}). Such distributions
are referred to as Markov random fields,
graphical models, spin models or constraint satisfaction problems,
depending on the context. Common algorithmic and analytic tools
were developed to address a number of questions ranging from
statistical physics to computer science. Among such tools, we
recall local weak convergence \cite{BenjSchr,AldousSteele} and
correlation decay \cite{Dobrushin,Weitz,Georgii}.

The objective of the present paper
is to initiate a similar development in the context of
stochastic dynamical processes
that ``factor'' according to a sparse graph structure.
Rather than addressing a generic setting, we focus instead
on a challenging concrete question, and try to develop tools that are
amenable to generalization.

The majority process can be regarded as a example of
interacting particle system \cite{Liggett} or as a cellular automaton,
two topics with a long record of important results.
In particular, it bears some resemblance with the voter model.
The latter is however considerably simpler because of the underlying
martingale structure. Further, the voter model
does not exhibit any sharp threshold for $\theta_*(k) <1$.

More closely related to the model studied in this paper is the
zero-temperature Glauber dynamics for the Ising model, which obeys
the same update rule as in \mbox{(\ref{eq:MajorityRule}),
(\ref{eq:RandomUpdate})}. Let us stress that Glauber dynamics is defined
to be asynchronous: each spin is updated at the arrival times of an
independent Poisson clock of rate $1$.
Fontes, Schonmann and Sidoravicius \cite{Fontes} studied this dynamics
on $d$-dimensional grids, proving that the consensus threshold
is $\theta_*<1$ for all $d\ge2$. Howard \cite{Howard} studied
zero-temperature
Glauber dynamics on $3$-regular trees and found that infinite ``spin
chains'' of
both signs are formed almost surely
at positive times if we start with an unbiased
initialization. In our notation, this implies $\theta_* > 0$ for this model.
Positive-temperature Glauber dynamics
on trees was the object of several recent papers \cite{BergerEtAl,Martinelli}.
While no ``complete consensus'' can take place for positive temperature,
at small enough temperature this model exhibits coarsening,
namely the growth of a positively
(or negatively) biased domain.
In particular, Caputo and Martinelli \cite{CaputoMartinelli} proved
that the corresponding threshold $\theta_{*,\mathrm{coars}}(k)\to0$
as $k\to\infty$.

As mentioned, an important difference with respect to these
studies lies in the fact that we focus on synchronous dynamics.
Indeed our methods are
somewhat simpler to apply to the synchronous case.
Nevertheless, we think that
they can be generalized to the asynchronous setting as well.
In particular, we expect that a limit theorem analogous
to Theorem \ref{thm:unbiased_convergence} (with a proper definition
of the cavity process) can be proved
for Glauber dynamics as well.
As for the lower bounds on~$\theta_*(k)$, we imagine that arguments similar
to the ones leading to Theorem \ref{thm:LowerBound} can be developed
also for the asynchronous case. For instance, the result by
Howard \cite{Howard} on $k=3$ referred indeed to asynchronous dynamics.

More important is the difference between
trees and grids. The methods developed in this paper are well suited
for analyzing stochastic processes on locally tree-like graphs, while
a good part of the literature on Glauber dynamics focused on $d$-dimensional
grids.

Variations of the majority dynamics on locally tree-like graphs have
been studied recently within the statistical mechanics literature
\cite{Hatchett04,Neri2009}. Both of these papers
analyze the synchronous dynamics. In particular, the latter paper uses a
nonrigorous version of the cavity method.

The main technical ideas developed in this paper are quite
far from the ones within interacting particle systems.
More precisely, we develop a dynamical analogue of
the so-called ``cavity method'' that has been successful in the
analysis of probabilistic models on sparse random graphs. The basic idea
in that context is to exploit the locally tree-like structure
of such graphs to derive an approximate dynamic-programming type
recursion. This idea was further developed mathematically in the
local weak convergence framework of Aldous and Steele
\cite{AldousSteele}. Adapting this framework
to the study of a stochastic process is far from straightforward.
First of all, one has to determine what quantity to write
the recursion for. It turns out that an exact recursion can be proved
for the probability distribution of the trajectory of the root
spin in a modified majority process (see Section \ref{sec:ExactCavity}
for a precise definition). The next difficulty consists of extracting
useful information from this recursion which is rather implicit and intricate.
We demonstrate that this can be done for large $k$ using an appropriate
local central limit theorem proved in Appendix \ref{app:CLT}.
This allows to prove convergence to the cavity process;
see Theorems \ref{thm:unbiased_convergence} and
\ref{thm:biased_convergence}. There has been previous work in this spirit
(for other models)
that uses a normal approximation or a large degree limit (e.g., see
\cite{Talabook,Chatterjee}).

The use of a dynamic cavity method for analyzing stochastic
dynamics was pioneered in the statistical physics literature
on mean field spin glasses.
In that context, one is typically interested in the asymptotic
behavior of Langevin dynamics for large system sizes.
The energy function is taken to be a spin-glass Hamiltonian,
and the cavity method can be used to explore this asymptotics.
A lucid (albeit nonrigorous) discussion can be found in
\cite{SpinGlass}, Chapter VI.
This approach allows one to derive limit deterministic equations
for the covariance and the ``response function'' of the process under
study. The study of such equations led to a deeper
understanding of fascinating phenomena such as ``aging'' in spin glasses
\cite{BouchaudEtAl}.
For some models, the limit equations were proved rigorously
after a tour de force in stochastic processes theory \cite{DemboEtAl}.
Theorem \ref{thm:unbiased_convergence} presents remarkable
structural similarities with these results. It suggests that this
type of approach might be useful in analyzing a large array of
stochastic dynamics on graphs.

Over the last couple of years, the cavity method has also
been successfully applied in nonrigorous studies
of quantum spin models on trees \cite{Quantum1,Quantum2}, a topic of interest
in condensed matter physics.
While this paper does treat quantum spin models,
there are strong mathematical similarities
between the dynamic cavity method adopted here
and the cavity analysis of \cite{Quantum1,Quantum2}.
It would be interesting to adapt the rigorous methods developed here
to the analysis of quantum models.

The majority dynamics can be viewed as a simple model of iterative
``social learning'' (see, e.g., \cite{BalaGoyal98,DeGroot74}). In this
context, the initial spin $\sigma_i(0)$ at node $i$ can be interpreted
as a noisy version of some underlying ``state of the world'' that agents
are attempting to learn from each other. Tools developed in this work,
such as the dynamic cavity method, should be applicable to other models
of social learning (cf. Remark \ref{rem:dynamic_cavity_general_update}).

The majority process and similar
models have been studied in the economic theory literature
\cite{Morris,Kleinberg}
within the general theme of ``learning in games.'' In this context,
each node corresponds to a strategic agent and each of the two
states to a different strategy. The dynamics studied
in this paper is just a best-response dynamics, whereby each agent plays
a symmetric coordination game with each of its neighbors. It
would be interesting to apply the present methodology to more
general game-theoretic models.

%
%

%
%
\section{\texorpdfstring{The dynamic cavity method and proof of Theorem
\protect\ref{thm:biased_convergence}}{The dynamic cavity method and proof of
Theorem 2.8}}
\label{sec:ProofUB}

%

The proof of Theorem \ref{thm:biased_convergence} is organized as follows.
Section \ref{sec:notations} introduces some notation.
We start by proving some basic properties of the cavity process
in Section \ref{sec:CavityProcess}.
We state a local central limit theorem for lattice random variables in
Section \ref{sec:sum_approx}.
A proof of Theorem \ref{thm:unbiased_convergence} follows in Section
\ref{sec:unbiased_convergence}.
Finally, in Section \ref{sec:biased_convergence}, we derive a delicate
relationship
between the biased and unbiased processes and prove Theorem \ref
{thm:biased_convergence}.

\subsection{Notation}
\label{sec:notations}
We use the following notation throughout this section.
For a sequence $a(0), a(1), a(2),\ldots,$ and given $t\ge s$, we let
$a_s^t\equiv(a(s),a(s+1)$, $\ldots,a(t))$. Further, given the correlation
and response functions $C$ and $R$, and an integer $T\ge0$,
we define the $(T+1)\times(T+1)$ matrices $C_T = \{C(t,s)\}_{t,s\le T}$
and $R_T = \{R(t,s)\}_{s<t\le T}$.

Given $m\in\reals^d$ and $\Sigma\in\reals^{d\times d}$, we
let $\phi_{m,\Sigma}(x)$ be the
density at $x \in\reals^d$ of a Gaussian random variable with mean
$\mu$ and covariance $\Sigma$.
Finally, if $\A\in\reals^d$ is a rectangle,
$\A= [a_1,b_1]\times[a_2,b_2]\times\cdots\times[a_d,b_d]$
(with $a_i\le b_i$), we let
%
%
\begin{equation}
\Phi_{m,\Sigma}(\A) \equiv\int_{\A} \phi_{m,\Sigma}(x) \prod
_{i\in[d]\dvtx b_i>a_i}
\de x_i .
\end{equation}
Notice that those coordinates such that $a_i=b_i$ are not integrated over.
For a partition $\{1,\ldots,d\}=\I_0\cup\I_+\cup\I_-$
and a vector $a\in\Z^d$, define
%
%
\begin{eqnarray}\quad
\label{eq:DefA2}
A(a,\I) &\equiv& \{z \in\Z^d \dvtx z_i=a_i \ \forall i \in
\I_0,
z_i\ge a_i \ \forall i \in\I_+, z_i\le a_i \ \forall i \in
\I_-\} ,\\
\label{eq:DefA1}
\A_{\infty}(\I) &\equiv& \{z \in\reals^d \dvtx z_i=0 \ \forall i
\in\I_0,
z_i\ge0 \ \forall i \in\I_+, z_i\le0 \ \forall i \in\I
_-\}
.
\end{eqnarray}
%
%
\subsection{The cavity process}
\label{sec:CavityProcess}

We start by checking that consistent $R$, $C$ are uniquely defined,
thus justifying the definition of the cavity process.
\begin{lemma}\label{lemma:Existence}
There exist unique consistent $C$, $R$ (cf. Definition \ref{def:consistency}).
\end{lemma}
\begin{pf}
For $T \geq0 $, let $C_T, R_T$ denote the restriction of $C, R$ to
index values of at most $T$. Define
\[
\Ev(T) \equiv \mbox{There exists unique
$C_T, R_T$, such that (\ref{eq:cavity_def1}) is satisfied for all
$s,t\le T$}
\]
and
\[
\mbox{(\ref{eq:cavity_def2}) is satisfied for all $s<t\le T$}.
\]
We want to show that $\Ev(T)$ holds for all $T \geq0$. We proceed by
induction.
Clearly, $\Ev(0)$ holds with $C(0,0)=1$.

Suppose $\Ev(T)$ holds. Denote by $C_T$
and $R_T$ the corresponding consistent covariance and
response function, that exist and are unique by hypothesis.
We construct consistent $C_{T+1}$ and $R_{T+1}$ by suitably
extending $C_T$ and $R_T$.
Let $(\tau(0), \tau(1), \ldots,
\tau(T+1))$ be a sample path of the uniquely defined
effective process with parameters $C_T$, $R_T$ and $h$ as per
(\ref{eq:asymp_dyn_def}).
Define $C(s,T+1)=C(T+1,s)$ for all $s\le T+1$
by (\ref{eq:cavity_def2}) with $t=T+1$.
Define $R(T+1,s)$ for all $s\le T$ by (\ref{eq:cavity_def1})
with $t=T+1$. The resulting $C_{T+1}, R_{T+1}$ are clearly consistent
up to $T+1$.
Notice that $C_{T+1}$ is positive semidefinite by construction.
We now need to argue that there is no other consistent $C_{T+1}, R_{T+1}$.
But this is clearly true since the restriction up to time $T$ \textit
{must} match
$C_T, R_T$ for consistency up to $T$, and the extension to $T+1$ defined
above is the only way
to satisfy (\ref{eq:cavity_def1}) and (\ref{eq:cavity_def2})
with $t=T+1$.
Thus, $\Ev(T+1)$ holds.
Induction completes the proof.
\end{pf}
\begin{lemma}\label{lemma:cavity_non_degenerate}
Let\vspace*{1pt} $\{C(t,s)\}_{t,s\ge0}$ be the correlation function of the cavity
process. For any $T\ge0$ the matrix $C_T$ is strictly positive definite,
and $\prob(\tau^{T}=\traj^{T})>0$ for each $\traj^{T}
\in\{\pm1\}^{T+1}$.
\end{lemma}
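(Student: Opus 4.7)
The plan is to establish both assertions by joint induction on $T$: (i) $C_T$ is strictly positive definite, and (ii) $\prob(\sigma_0^T = \omega_0^T) > 0$ for every $\omega_0^T \in \{\pm 1\}^{T+1}$. The base case $T=0$ is immediate, since $C_0 = (1)$ and $\sigma(0)$ is Bernoulli$(1/2)$. In the inductive step I first deduce (ii) at level $T+1$ from (i) at level $T$, and then deduce (i) at level $T+1$ from (ii) at the same level via a symmetry argument. The main subtlety is simply executing the two substeps in the correct order.

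For (ii) at $T+1$: unrolling the recursion~(\ref{eq:asymp_dyn_def}) with $h \equiv 0$, a straightforward induction on $t$ gives
\begin{equation*}
\{\sigma_0^{T+1} = \omega_0^{T+1}\} \;=\; \{\sigma(0) = \omega(0)\} \;\cap\; \bigcap_{t=0}^T \Bigl\{ \omega(t+1)\bigl(\eta(t) + g_t(\omega)\bigr) > 0 \Bigr\},
\end{equation*}
where $g_t(\omega) := \sum_{s=0}^{t-1} R(t,s)\,\omega(s)$ is a deterministic constant depending only on $\omega_0^{t-1}$. The first event is independent of $\eta$ and has probability $1/2$, while the intersection on the right specifies that $(\eta(0),\ldots,\eta(T))$ lies in a non-empty translated open orthant $U \subset \reals^{T+1}$. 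By the inductive hypothesis (i) at level $T$, $C_T$ is strictly positive definite, so $(\eta(0),\ldots,\eta(T))$ is a non-degenerate Gaussian with everywhere-positive density on $\reals^{T+1}$, giving $\prob(\eta \in U) > 0$ and hence (ii) at $T+1$.

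For (i) at $T+1$: the joint law of $(\sigma(0), \eta(0), \eta(1),\ldots)$ is invariant under the global sign flip $(\sigma(0), \eta) \mapsto (-\sigma(0), -\eta)$, and one checks inductively from (\ref{eq:asymp_dyn_def}) that this transformation sends $\sigma(t) \mapsto -\sigma(t)$ for every $t$. Hence $\E[\sigma(t)] = 0$ for all $t$, so $C_{T+1}$ is literally the covariance matrix of $(\sigma(0),\ldots,\sigma(T+1))$. For any nonzero $a \in \reals^{T+2}$,
\begin{equation*}
a^{\top} C_{T+1}\, a \;=\; \mathrm{Var}\!\Bigl( \sum_{t=0}^{T+1} a_t\, \sigma(t) \Bigr).
\end{equation*}
The map $\omega \mapsto \sum_t a_t\, \omega(t)$ on $\{\pm 1\}^{T+2}$ is non-constant whenever $a \neq 0$ (flipping $\omega(t)$ changes it by $2a_t$), so it attains at least two distinct values, corresponding to two distinct configurations. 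By (ii) at $T+1$, both configurations have strictly positive probability, so the variance is strictly positive and $C_{T+1}$ is strictly positive definite, completing the induction.
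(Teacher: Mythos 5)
Your proof is correct and follows essentially the same structure as the paper's: a joint induction on $T$ in which strict positive definiteness of $C_T$ gives a non-degenerate Gaussian, hence full support of $\sigma_0^{T+1}$, and full support in turn forces $C_{T+1}$ to be strictly positive definite. The one difference is a small, unnecessary detour: you invoke the sign-flip symmetry $(\sigma(0),\eta)\mapsto(-\sigma(0),-\eta)$ to show $\E[\sigma(t)]=0$ and then identify $C_{T+1}$ with a covariance matrix, whereas the paper works directly with the second-moment identity $a^{\top}C_{T+1}a=\E\bigl[(\sum_t a_t\sigma(t))^2\bigr]$, which already vanishes only when $\sum_t a_t\sigma(t)=0$ almost surely — a possibility ruled out by full support exactly as you argue. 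Both steps are valid; the paper's version just skips the centering.
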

\begin{pf}
As a preliminary remark notice that, by
Lemma \ref{lemma:Existence}, $R(t,s)$ is well defined for all $s<t$. Moreover,
it is easy to see that it is always finite.

We prove the lemma by induction. Clearly, $C_0$ is positive definite and
$\prob(\tau(0)=\pm1) = \frac{1}{2} > 0$. Suppose, $C_T$ is positive
definite.
Now, from the definition of the cavity process, we have
%
%
\begin{equation}\label{eq:traj_prob}
\prob(\tau^{T+1}=\traj^{T+1})= \tfrac{1}{2} \Phi_{\mu(\traj
^T),C_T}(\A_\infty(\I_C(\traj))),
\end{equation}
where $\I_C(\traj^T)$ is the partition of $\{1,2,\ldots,T\}$
defined as follows:
%
%
\begin{eqnarray}
\I_C(\traj)&\equiv& (\varnothing,\I_{C,+},\I_{C,-}),\qquad
\I_{C,+}= \{i\dvtx\traj(i+1)=+1\},\nonumber\\[-8pt]\\[-8pt]
\I_{C,-}&=& \{i\dvtx\traj (i+1)=-1\},\nonumber\\
\label{eq:ic_mu_defn}
\mu(\traj^T)&\equiv&(\mu_0(\traj^T), \ldots, \mu_t(\traj^T))
\qquad\mbox{with }
\mu_r(\traj^T)\equiv\sum_{s=0}^{r-1}R(r,s)\traj(s) .
\end{eqnarray}
Since $C_T$ is positive definite, we have $\Phi_{\mu(\traj
^T),C_T}(x) > 0
\ \forall x \in\reals^{T+1}$, whence
$\prob(\tau^{T+1}=\traj^{T+1})>0$ for all
$\traj^{T+1} \in\{-1, +1\}^{T+2}$.
Notice that $C_{T+1}$ is positive semidefinite
by the definition of cavity process.
If $C_{T+1}$ is not strictly positive definite, there must
be a linear combination of $(\tau(0),\ldots,\tau(T+1))$ that
is equal to $0$ with probability $1$.
Since the distribution of $\tau^{T+1}$ gives positive weight to
each possible configuration, there must exist a nontrivial linear function
in $\reals^{T+2}$ that vanishes on very point of $\{\pm1\}^{T+2}$,
which is impossible. This proves that $C_{T+1}$ is strictly positive definite.
\end{pf}

The above proof provides, in fact, a procedure to
determine $C(t,s)$ and $R(t,s)$ by recursion over $t$.
However, while the recursion for $C$ consists just of a multi-dimensional
integration over the Gaussian variables $\{\eta(t)\}$, the recursion for
$R$ [cf. (\ref{eq:cavity_def2})] is a priori more complicated
since it involves differentiation with respect to~$h$. The next lemma
provides more explicit expressions.
\begin{lemma}\label{lemma:RCexp}
The correlation and response functions $C$ and $R$ of the cavity
process are determined by the following recursion:
%
%
\begin{eqnarray}\qquad
\label{eq:CorrRecursion}
C(t+1,s) & = & \frac{1}{2} \sum_{\traj^{t+1} \in\{\pm1\}^{(t+2)}}
\traj(t+1) \traj(s)
\Phi_{\mu(\traj^t),C_t}(A_\infty(\I_C(\traj)))\nonumber\\[-8pt]\\[-8pt]
&&\eqntext{\forall0 \le s \le t,}\\
\label{eq:RespRecursion}
R(t+1,s)
&=& \frac{1}{2} \sum_{\traj^{t+1} \in\{\pm1\}^{(t+2)}}
\traj(t+1) \traj(s+1)
\Phi_{\mu(\traj^t),C_t}(A_\infty(\I_R(\traj,s)))
\nonumber\\[-8pt]\\[-8pt]
&&\eqntext{\forall0 \le s \le t,}
\end{eqnarray}
with boundary condition $R(t,s)= 0$ for $t\le s$, $C(t,t) = 1$ and
$C(s,t) = C(t,s)$. Here, $\I_C$ and
$\I_R$ are partitions of $\T=\{0,1,\ldots,t\}$
of the form $\I\equiv(\I_0,\I_+, \I_-)$, with $\I_C$ and $\mu$
defined as per
(\ref{eq:ic_mu_defn}) with $T=t$ and $\I_R$ is defined by
%
%
\begin{eqnarray}
\I_R(\traj,s)&\equiv&(\{s\},\I_{R,+},\I_{R,-}),\qquad
\I_{R,+}= \{i\dvtx\traj(i+1)=+1\}\setminus\{s\},\nonumber\\[-8pt]\\[-8pt]
\I_{R,-}&=& \{
i\dvtx\traj(i+1)=-1\}\setminus\{s\} .\nonumber
\end{eqnarray}
\end{lemma}

We provide a proof of this lemma in Appendix \ref{app:RCexp}.

Equation (\ref{eq:RespRecursion}) yields in particular
%
%
\begin{equation}\label{eq:R_t_tplus1}
R(t+1,t) = \sum_{\traj^{t}\in\{\pm1\}^{t+1}} \Phi_{\mu(\traj
^t),C_t}(A_\infty(\I_R(\traj,t))) .
\end{equation}
Note that $R(t+1,t)>0 \ \forall t \ge0$, since it is a sum of
positive terms.
These facts will be used later in Section
\ref{sec:biased_convergence}.\vadjust{\goodbreak}


Tables \ref{table2} and \ref{table3} contain computed
values of $C$ and $R$, respectively, for small
values of $s$, $t$.


%
\begin{table}
\tablewidth=235pt
\caption{Computed $C(t,s)$ values}\label{table2}
\begin{tabular*}{\tablewidth}{@{\extracolsep{\fill}}l
@{\hspace*{-5pt}}c@{\hspace*{-5pt}}d{1.4} d{1.4} c c@{}}
\hline
& &\multicolumn{4}{c@{}}{\hspace*{-6pt}$\bolds{s}$}\\
[-4pt]
& &\multicolumn{4}{c@{}}{\hspace*{-6pt}\hrulefill}\\
$\bolds{t}$ & & \multicolumn{1}{c}{\hspace*{-6pt}\textbf{0}} & \multicolumn{1}{c}{\textbf{1}} & \multicolumn{1}{c}{\textbf{2}} & \multicolumn{1}{c@{}}{\textbf{3}}\\
\hline
$ 0 $
& \multirow{4}{2pt}{\rule{0.5pt}{41pt}}
& 1\\
1 & & 0 & 1 & & \\
2 & & 0.5751 & 0 & 1 \\
3 & & 0 & 0.7600 & 0 & 1\\
\hline
\end{tabular*}
\end{table}

\begin{table}[b]
\tablewidth=235pt
\caption{Computed $R(t,s)$ values}\label{table3}
\begin{tabular*}{\tablewidth}{@{\extracolsep{\fill}}l
@{\hspace*{-5pt}}c@{\hspace*{-5pt}}d{1.4} d{1.4} d{1.4} c@{}}
\hline
& &\multicolumn{4}{c@{}}{\hspace*{-6pt}$\bolds{s}$}\\
[-4pt]
& &\multicolumn{4}{c@{}}{\hspace*{-6pt}\hrulefill}\\
$\bolds{t}$ & & \multicolumn{1}{c}{\hspace*{-6pt}\textbf{0}} & \multicolumn{1}{c}{\textbf{1}} & \multicolumn{1}{c}{\textbf{2}} & \multicolumn{1}{c@{}}{\textbf{3}}\\
\hline
1
& \multirow{4}{2pt}{\rule{0.5pt}{41pt}}
& 0.7979\\
2 & & 0 & 0.5804 \\
3 & & 0.4164 & 0 & 0.4607 \\
4 & & 0 & 0.2920 & 0 & 0.3950\\
\hline
\end{tabular*}
\end{table}

Note how $C(t,s)=0$ when $t$ and $s$ have different parity, and
$R(t,s)=0$ when $t$ and $s$ have the same parity. This is a simple
consequence of the fact that the dynamics is ``bipartite.''
This also allows us to reduce the dimensionality of integrals in
(\ref{eq:CorrRecursion}) and (\ref{eq:RespRecursion}),
making numerical computations easier.

%
%

%
%
\subsection{A central limit theorem}
\label{sec:sum_approx}

We will use repeatedly the following local
central limit theorem for lattice random variables.
\begin{theorem}\label{thm:sum_approx}
For any $B>0$ and $d \in\mathbb{N}$, there exists a finite constant
$\const=\const(B,d)$ such that the
following is true.
Let $X_1,X_2, \ldots, X_N$, be i.i.d. random vectors with $X_1 \in\{
+1,-1\}^d$
and
\[
\|\Ex X_1\| \le\frac{B}{\sqrt{N}} ,\qquad
\min_{s\in\{+1,-1\}^d}\prob(X_1 = s) \ge\frac{1}{B} .
\]

Let $p_N$ be the distribution of $S_N=\sum_{i=1}^N X_i$.
For a partition $\{1,\ldots,d\}=\I_0\cup\I_+\cup\I_-$
and a vector $a\in\Z^d$, with $\|a\|_{\infty}\le B \log N$, define
$A(a,\I)$, $\A_{\infty}(\I)$ as in (\ref{eq:DefA1}) and
(\ref{eq:DefA2}).

Assume the coordinates $a_i$ to have the same parity as $N$.
We then have
%
%
\begin{eqnarray}\label{eq:sum_approx}\quad
\sum_{y \in A(a,\I) } p_N(y) &=&
\frac{2^{|\I_0|}}{N^{|\I_0|/2}} \Phi_{\sqrt{N}\Ex X_1,
\Cov(X_1)}(\A_{\infty}(\I)) \bigl(1+\mathsf{Err}(a,\I
,N)\bigr) ,
\\
|\mathsf{Err}(a,\I,N)|&\le&\const(B,d) N^{-1/(2|\I_0|+2)}.\nonumber
\end{eqnarray}
\end{theorem}

A simple proof of this result can be obtained using
the Bernoulli decomposition method of \cite{McDonald1,McDonald2}
and is reported in Appendix \ref{app:CLT}. Indeed Appendix \ref{app:CLT}
proves a slightly stronger result.
%
%
\subsection{\texorpdfstring{Unbiased initialization: Proof of Theorem
\protect\ref{thm:unbiased_convergence}}{Unbiased initialization: Proof of Theorem
2.7}}
\label{sec:unbiased_convergence}

Throughout this section, we consider the case of unbiased
initialization, that is, $\theta=0$.

Before passing to the details of the actual proof,
we attempt to provide some intuition.

\subsubsection{\texorpdfstring{Theorem
\protect\ref{thm:unbiased_convergence}: Basic intuition}{Theorem
2.7: Basic intuition}}

The central idea consists in studying the dynamics
at the root of the rooted tree $\G_{\rroot}=(\V_{\rroot},\Ed
_{\rroot})$
with updates modified according to (\ref{eq:ModifiedUpdateRoot}).
The dynamics\vspace*{2pt} at the root is indeed completely characterized by the
recursion (\ref{eq:Recursion}). Let $y^T= \sum_{i=1}^{k-1} \sigma_i^T$,
and write $\prob( y^T \midd \sigma_\rroot^{T})$ for its
distribution under
the product measure $\prod_{i=1}^{k-1}\prob(\sigma_i^{T}\midd
\sigma_\rroot^{T})$. Since $\sigma_\rroot^T$ only depends on its
neighbors through their sum $y^T$, all that matters is in fact the
distribution $\prob( y^T \midd \sigma_\rroot^{T})$. A further simplification
arises in the large $k$ limit because we can apply the central
limit theorem to show that $y^T$ converges to a Gaussian random variable.

Two complications however arise:
(i) The mean and variance of this Gaussian depend in an a priori
arbitrary way on $\sigma_\rroot^{T}$ itself; (ii) In order to
track this dependence, it is necessary to establish a central limit theorem
for $y^T$.

In order to illustrate these points, it is useful to
follow the first few steps of the dynamics.
First take $T=0$. We know that $\prob(\sigma_i(0) \midd \sigma
_\rroot
(0)) = \prob_0(\sigma_i(0))=1/2$. Thus, using (\ref{eq:Recursion})
for $T=0$, we get
\[
\prob(\sigma_\rroot^{1}\midd u^{1} )= \frac{1}{2^k}
\sum_{\sigma_1(0)\cdots\sigma_{k-1}(0)}
\K_{u(0)}(\sigma_{\rroot}(1)|\sigma_{\partial\rroot}(0))
.
\]
This expression can be estimated by approximating
$\prob(y(0) \midd u(0))$ with the Gaussian distribution
$\mathcal{N}(0, k-1)$. In particular, using the expression
(\ref{eq:Kdef}) for $\K_{u(0)}(\sigma_{\rroot}(1)|\sigma
_{\partial\rroot}(0))$
we get, for $(k-1)$ even and $u(0)\in\{+1,-1\}$
\begin{eqnarray*}
\E(\sigma_\rroot(1)\midd u^{1}) &=& \E
\sign\Biggl(\sum_{i=1}^{k-1}\sigma_i(0)+u(0)\Biggr)\\
&=& \E
\sign\bigl(y(0)+u(0)\bigr)
= u(0) \prob\bigl(y(0)=0\bigr)\\
&\approx& u(0) \prob\bigl(
\sqrt{k-1} Z\in[-1, 1]\bigr)\approx\sqrt{\frac{2}{\pi k}}
u(0) ,
\end{eqnarray*}
where $Z$ denotes a unit normal random variable.
Using the fact that $\sigma_\rroot(0)$ is independent
of $\sigma_\rroot(1)$ by the bipartite nature of the dynamics,
we obtain the estimate
\[
\prob(\sigma_\rroot^{1}\midd u^{1})\approx\frac{1}{4}
\biggl(1+ \frac{R(1,0)u(0)}{\sqrt{k}} \sigma_\rroot(1)\biggr) ,
\]
where $R(1,0)= \sqrt{2/\pi}$
as per (\ref{eq:R_t_tplus1}). 
It follows that $\E_{\prob(\cdot\midd u^1)}[\sigma_\rroot(1)]
\approx
R(1,0)\times\break u(0)k^{-1/2}$. Also,
$\E_{\prob( \cdot\midd u^1)}[\sigma_\rroot(1)\sigma_\rroot(0)]=
C(1,0)=0$ and
$\E_{\prob(\cdot\midd u^1)}[\sigma_\rroot(1)\sigma_\rroot(1)]=
C(1,1)=1$.
It follows that $\prob(y^1 \midd u^1)$ has a Gaussian approximation
$ \mathcal{N}( \sqrt{k} \mu(\sigma_\rroot^1)$, $k C_1)$,
where
$\mu(\sigma_\rroot)^1 = ( {0}\enskip{R(1,0) \sigma_\rroot(0)} )$.
Note how the mean and standard deviation of $y^1$ are each of the
same order $\Theta(\sqrt{k})$.

When passing to $T=1$ in (\ref{eq:Recursion}), we can make this
normal approximation for the environment $y^1=
\sum_{i=1}^{k-1} \sigma_i^1$, up to time $1$.
We hence obtain a stochastic description of the root spin process up to
time $2$. Essentially the same argument is extended to any time $T$
by induction, as is explained in detail in the following.
%
%
\subsubsection{\texorpdfstring{Theorem
\protect\ref{thm:unbiased_convergence}: The actual proof}{Theorem
2.7: The actual proof}}

The next lemma rigorizes the above intuition and extends it to all
times $T$ by induction.
\begin{lemma}\label{lemma:RCconvergence}
Let $T\ge0$ and $u^T$ with $u(t)\in\{+1,-1\}$ be given. Assume
$\sigma_\rroot^T$ to be distributed according to
$\prob( \cdot\midd u^T)$. Then, as $k\to\infty$, we have
%
%
\begin{eqnarray}\label{eq:CovarianceMean}
|\E\{\sigma_{\rroot}(t)\sigma_{\rroot}(s)\} -C(t,s)|
&=& o(1) ,\nonumber\\[-8pt]\\[-8pt]
\Biggl|
\E\sigma_{\rroot}(t)-\frac{1}{\sqrt{k}}\sum_{s=0}^{t-1}R(t,s)
u(s)\Biggr| &=&
o(k^{-1/2})\nonumber
\end{eqnarray}
for all $t,s \leq T$.
Further, for any $u^T$, $\sigma_\rroot^T \stackrel{d}{\rightarrow}
\tau^T$, with $\tau^T$
distributed according to the cavity process.
\end{lemma}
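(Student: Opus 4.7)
The plan is to prove all three assertions of the lemma---the covariance estimate, the mean estimate, and convergence in distribution---simultaneously by induction on $T$. The base case $T=0$ is immediate: under the modified dynamics, $\sigma_{\root}(0)$ is Bernoulli$(1/2)$ on $\{\pm 1\}$, so $\E\sigma_{\root}(0)=0$ (matching the empty sum in the formula), $\E\sigma_{\root}(0)^2=1=C(0,0)$, and the distributional limit is trivial. For the inductive step, I would apply the exact cavity recursion of Lemma \ref{lemma:ExactCavity}: conditional on $(\sigma_{\root})_0^T=\omega_0^T\in\{\pm 1\}^{T+1}$, the $k-1$ child trajectories $(\sigma_i)_0^T$ are i.i.d.\ with law $\prob(\cdot\|\omega_0^T)$. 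Since each child subtree is isomorphic to $\G_{\root}$, the induction hypothesis applies with $\omega$ playing the role of the external field, so each $s_i=(\sigma_i)_0^T$ satisfies $\E s_i(t)=(1/\sqrt{k})\sum_s R(t,s)\omega(s)+o(k^{-1/2})$ and $\Cov(s_i(t),s_i(s))=C(t,s)+o(1)$, and $s_i$ converges in distribution to the cavity process with field $\omega$. The new spin $\sigma_{\root}(T+1)=\sign(\sum_i\sigma_i(T)+u(T))$ is determined by the lattice sum $y=(\sum_i s_i(0),\dots,\sum_i s_i(T))\in\Z^{T+1}$, so the whole analysis reduces to a local central limit theorem for $y$.

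I would then apply Theorem \ref{thm:sum_approx} with $N=k-1$ and $d=T+1$. Up to tie-breaking and a parity correction, the event $\{(\sigma_{\root})_0^{T+1}=\omega_0^{T+1}\}$ corresponds to $y\in A(a,\I)$ with $\I_0=\emptyset$, with $\I_\pm$ determined by $\omega_1^{T+1}$, and with $\|a\|_\infty\le 2$ encoding the $u$-dependence. The Gaussian mean $\sqrt{k-1}\,\E s_i$ tends to $\mu_{\cav}(\omega)=\sum_s R(t,s)\omega(s)$, the covariance tends to $C_T$, and the leading probability tends to $\Phi_{\mu_{\cav}(\omega),C_T}(A_\infty(\I_C(\omega)))$, which by Eq.~(\ref{eq:traj_prob}) is precisely the cavity probability of the target trajectory. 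This yields the convergence in distribution; and since the trajectory $(\sigma_{\root})_0^{T+1}$ takes finitely many values, bounded-functional convergence immediately gives the covariance estimate $\E\sigma_{\root}(t)\sigma_{\root}(s)\to C(t,s)$.

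The delicate part is the mean estimate at precision $o(k^{-1/2})$. The key observation is that shifting the integer threshold $a$ by $u(t)$ in $y$-space is equivalent to shifting the limiting Gaussian mean by $u(t)/\sqrt{k-1}$; equivalently, our process behaves like a cavity process with small external field $h(t)=u(t)/\sqrt{k-1}$. A first-order Taylor expansion in this field, combined with the defining identity $R(t,s)=\partial_{h(s)}\E_{C,R,h}[\sigma(t)]|_{h=0}$ and with $1/\sqrt{k-1}=1/\sqrt{k}+O(k^{-3/2})$, yields the claimed formula $\E\sigma_{\root}(t)=(1/\sqrt{k})\sum_s R(t,s)u(s)+o(k^{-1/2})$. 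The main technical obstacle is making this linear-response argument rigorous at the $o(k^{-1/2})$ level: the multiplicative error in Theorem \ref{thm:sum_approx} is only $O(k^{-1/2})$, which is not sharp enough on its own. I would therefore rely on the strengthened local CLT announced in Appendix \ref{app:CLT}, or alternatively exploit the $u\mapsto -u$ symmetry of the unbiased leading term to cancel the $O(k^{-1/2})$ error before isolating the linear-in-$u$ contribution coming from the field.
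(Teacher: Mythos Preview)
Your inductive setup, the use of the exact cavity recursion with the children as i.i.d.\ copies under field $\omega=(\sigma_\root)_0^T$, and the CLT argument for the distributional limit and the covariance estimate are exactly as in the paper.

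For the mean estimate, your linear-response heuristic is the right intuition, and your option (b) is essentially what the paper does, but it is implemented more concretely than a Taylor expansion in a Gaussian shift. The paper compares $\prob((\sigma_\root)_0^{T+1}||u_0^{T+1})$ with $\prob((\sigma_\root)_0^{T+1}||0_0^{T+1})$, the latter having zero mean by the $\pm$ symmetry. Since $|u(t)|\le 1$, the kernels $\K_{u(t)}$ and $\K_0$ differ only on the event $\big|\sum_i\sigma_i(t)\big|\le 1$, so the difference of probabilities is supported on configurations with at least one such near-tie. Theorem~\ref{thm:sum_approx} is then applied with exactly one coordinate pinned ($|\I_0|=1$), which directly produces the $1/\sqrt{k}$ prefactor with multiplicative error $o(1)$; terms with $|\I_0|\ge 2$ are $O(k^{-1})=o(k^{-1/2})$ and discarded. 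An explicit evaluation of $\K_{u(t_0)}-\K_0$ at the near-tie reduces the coefficient of $u(t_0)$ to the expression for $R(T+1,t_0)$ from Lemma~\ref{lemma:R_expression}. So instead of differentiating a continuous Gaussian approximation, the paper reads off the derivative \emph{discretely}, as the probability of a single near-tie event; this is what makes the $o(k^{-1/2})$ control fall out without any sharpening of the local CLT beyond what Theorem~\ref{thm:sum_approx} already provides.
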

\begin{pf}
The proof is by induction on the number of steps $T$. Obviously, the thesis
holds for $T=0$.

Assume\vspace*{1pt} that it holds up to time $T$. Consider the exact
recursion equation (\ref{eq:Recursion})
and fix a sequence $\sigma_\rroot(0),\ldots,\sigma_\rroot(T+1)$.
Under the measure $\prod_{i=1}^{k-1}\prob(\sigma_i^{T}\midd
\sigma_\rroot^{T})$, the vectors $\sigma_1^T,\ldots,\sigma
_{k-1}^T$ are independent and identically distributed.
Further, by the induction hypothesis
\[
\E\sigma_1(t) = \frac{1}{\sqrt{k}}\sum_{s=0}^{t-1}R(t,s)\sigma
_{\rroot}(s)+
o(k^{-1/2}) ,\qquad
\E\{\sigma_1(t)\sigma_1(s) \} = C(t,s)+o(1) .
\]
By the central limit theorem
$\{\frac{1}{\sqrt{k}}\sum_{i=1}^k \sigma_i(t)\}_{0\le t\le T}$
converge in distribution to
%
%
\begin{equation}
\Biggl\{\eta(t) + \sum_{s=0}^{t-1}R(t,s) \sigma_\rroot(s)\Biggr\}
_{0\le t\le T} ,
\end{equation}
where $\{\eta(t)\}_{0\le t\le T}$ is a centered Gaussian vector with
covariance $\E\{\eta(t)\eta(s)\} = C(t,s)$. Since the product of indicator
functions in (\ref{eq:Recursion}) is a bounded function
of the vector $\{\frac{1}{\sqrt{k}}\sum_{i=1}^k \sigma_i(t)\}_{0\le
t\le T}$,
and the normal distribution is everywhere continuous, we have
%
%
\begin{eqnarray}\label{eq:UnbiasedConv}\quad
&&\lim_{k\to\infty}
\prob(\sigma_\rroot^{T+1}\midd u^{T+1})\nonumber\\[-8pt]\\[-8pt]
&&\qquad= \prob_0
(\sigma_\rroot(0))
\E_{\eta}\Biggl[
\prod_{t=0}^T \ind\Biggl\{\sigma_\rroot(t+1) = \sign\Biggl(
\eta(t)+\sum_{s=0}^{t-1}R(t,s)\sigma_\rroot(s)\Biggr)\Biggr\}
\Biggr] ,\nonumber
\end{eqnarray}
that is, $\sigma_\rroot^{T+1}$ converges in distribution to
the first $T+1$ steps of the cavity process.
This implies the first equation in (\ref{eq:CovarianceMean}).
It is therefore sufficient to prove the second equation
in (\ref{eq:CovarianceMean}), for $t=T+1$.

To get the estimate of the mean, we use again (\ref{eq:Recursion}),
and consider the distribution $\prob(\sigma_\rroot^{T+1}\midd
0^{T+1})$
whereby the root perturbation is set to $0$.
This satisfies the recursion equation (\ref{eq:Recursion}), with $u(t)
= 0$:
%
%
\begin{eqnarray}\label{eq:Recursion0}
&&\prob(\sigma_\rroot^{T+1}\midd 0^{T+1}) \nonumber\\[-8pt]\\[-8pt]
&&\qquad= \prob_0(\sigma
_\rroot(0))
\sum_{\sigma_1^{T}\cdots\sigma_{k-1}^{T}}
\prod_{t=0}^T
\K_{0}\bigl(\sigma_{\rroot}(t+1)|\sigma_{\partial\rroot}(t)\bigr)
\prod_{i=1}^{k-1}\prob(\sigma_i^{T}\midd \sigma_\rroot^{T}
) .\nonumber
\end{eqnarray}
Since $|u(t)|\le1$,
$K_{u(t)}(\cdots) = K_0(\cdots)$ for all values of $t$, except those
in which $\sum_{i=1}^{k-1}\sigma_i(t)\in\{+1,0,-1\}$.
Let $\I_0 =\{ t \dvtx|{\sum}_{i=1}^{k-1}\sigma_i(t)|\le1\}$.
Further, irrespective of $u(t)$,
$\K_{u(t)}(\sigma_{\rroot}(t+1)|\sigma_{\partial\rroot
}(t))$
is nonvanishing only if
$\sigma_\rroot(t+1) \sum_{i=1}^{k-1}\sigma_i(t) \ge-1$.
By taking the difference of (\ref{eq:Recursion}) and (\ref
{eq:Recursion0}),
we get
%
%
\begin{eqnarray}\label{eq:BigTerm}
&&
\prob(\sigma_\rroot^{T+1}\midd u^{T+1})-\prob(\sigma
_\rroot^{T+1}\midd 0^{T+1}) \nonumber\\
&&\qquad= \prob_0(\sigma_{\rroot}(0))
\sum_{\sigma_1^{T}\cdots\sigma_{k-1}^{T}}
\prod_{i=1}^{k-1}\prob(\sigma_i^{T}\midd \sigma_\rroot^{T}
)\prod_{t=0}^T
\ind\Biggl\{\sigma_\rroot(t+1) \sum_{i=1}^{k-1}\sigma_i(t) \ge-1
\Biggr\}
\nonumber\\[-8pt]\\[-8pt]
&&\qquad\quad\hphantom{\prob_0(\sigma_{\rroot}(0))
\sum_{\sigma_1^{T}\cdots\sigma_{k-1}^{T}}}
{}\times
\biggl(\prod_{t \in\I_0} \K_{u(t)}\bigl(\sigma_{\rroot
}(t+1)|\sigma_{\partial\rroot}(t)\bigr) \nonumber\\
&&\qquad\quad\hspace*{98.2pt}{} - \prod_{t \in\I_0} \K
_{0}\bigl(\sigma_{\rroot}(t+1)|\sigma_{\partial\rroot}(t)\bigr)
\biggr) .\nonumber
\end{eqnarray}

Let $y^T= \sum_{i=1}^{k-1} \sigma_i^T$,
and write $\prob( y^T \midd \sigma_\rroot^{T})$ for its
distribution under
the product measure $\prod_{i=1}^{k-1}\prob(\sigma_i^{T}\midd
\sigma_\rroot^{T})$. Further, let
\begin{eqnarray*}
\I_+&\equiv&\{t\dvtx t<T, t \notin\I_0, \sigma_\rroot(t+1)=+1 \} ,\\
\I_-&=&\{t\dvtx t<T, t \notin\I_0, \sigma_\rroot(t+1)=-1 \} .
\end{eqnarray*}
Then the above expression takes the form
\begin{eqnarray*}
&&\prob(\sigma_\rroot^{T+1}\midd u^{T+1})-\prob(\sigma
_\rroot^{T+1}\midd 0^{T+1})\\
&&\qquad=\prob_0(\sigma_\rroot(0))
\sum_{y^{T}}
\prob( y^T \midd (\sigma_\rroot)^{T}) \prod_{t\in\I_+}
\ind\{ y(t) > 1 \} \\
&&\qquad\quad\hphantom{\prob_0(\sigma_\rroot(0))
\sum_{y^{T}}}
{}\times\prod_{t\in\I_-}
\ind\{ y(t) < -1 \} f_{\I_0}(\{y(t)\}_{t\in\I_0}) ,
\end{eqnarray*}
where we defined $f(\{y(t)\}_{t\in\I_0})$ to be the
term in parentheses in (\ref{eq:BigTerm}).

Now, we can apply Theorem \ref{thm:sum_approx} for every possible
$\I_0$, by letting $X_i=\sigma_i^{T}$, so that $d=T+1$, and
$N=k-1$. Note that our induction hypothesis equation (\ref{eq:CovarianceMean})
on the mean implies that
%
%
\begin{equation}\label{eq:sigma_i_mean}
\Biggl|
\E\sigma_i(t)-\frac{1}{\sqrt{k}}\sum_{s=0}^{t-1}R(t,s) \sigma
_\rroot(s)\Biggr| =
o(k^{-1/2})
\end{equation}
for all $t\le T$.
In particular $\|\E X_1\| \le B/\sqrt{k}$ as needed.
Further, by Lemma \ref{lemma:cavity_non_degenerate}, our induction
hypothesis equation (\ref{eq:CovarianceMean}), and the convergence
result (\ref{eq:UnbiasedConv}), we have
$\min_s\prob\{X_1=s\}\ge1/B$ for all $k$ large enough.

Now\vspace*{2pt} $f_{\I_0}(\{y(t)\}_{t\in\I_0})=0$ for $\I_0=\varnothing$.
From Theorem \ref{thm:sum_approx}, the contribution for any $\I_0 \ne
\varnothing$ is
$\Theta( k^{-|\I_0|/2})$. It follows\vspace*{1pt} that the dominating terms
correspond to $\I_0=\{t_0\}$.
If we let $\mu'(\sigma_\rroot)=\sqrt{k-1}\Ex[(\sigma_1)^T]$,
$V(\sigma_\rroot)=\Cov((\sigma_1)^T)$, then
%
%
\begin{eqnarray}\label{eq:AlmostFinalLemma}
&&\prob(\sigma_\rroot^{T+1}\midd u^{T+1})-\prob(\sigma
_\rroot^{T+1}\midd 0^{T+1})\nonumber\\
&&\qquad =
\frac{2\prob_0(\sigma_\rroot(0))}{\sqrt{k-1}}
\sum_{t_0=0}^T
\Phi_{\mu'(\sigma_{\rroot}), V(\sigma_{\rroot})} (\A_{\infty
}(\I)) \nonumber\\[-8pt]\\[-8pt]
&&\qquad\quad\hphantom{\frac{2\prob_0(\sigma_\rroot(0))}{\sqrt{k-1}}
\sum_{t_0=0}^T }
{}\times\sum_{|y(t_0)|\le1}
\bigl\{ \K_{u(t_0)}\bigl(\sigma_{\rroot}(t_0+1)|y(t_0)\bigr)\nonumber\\
&&\qquad\quad\hspace*{120pt}{} -
\K_{0}\bigl(\sigma_{\rroot}(t_0+1)|y(t_0)\bigr) \bigr\}
\bigl(1+o(1)\bigr) ,\nonumber
\end{eqnarray}
where, with an abuse of notation,
we wrote $\K_{\cdot}(\sigma_{\rroot}(t_0+1)|y(t_0))$
for $\K_{\cdot}(\sigma_{\rroot}(t_0+1)|\sigma_{\partial
\rroot}(t_0))$
when $\sum_{i=1}^{k-1}\sigma_i(t_0) = y(t_0)$.
Further, the rectangle $\A_{\infty}(\I)$
is defined as in Theorem \ref{thm:sum_approx}.

If $k$ is odd, then the only term in the above sum is $y(t_0) =0$. An
simple explicit calculation shows that
\[
\K_{u(t_0)}\bigl(\sigma_{\rroot}(t_0+1)|y(t_0)=0\bigr) -
\K_{0}\bigl(\sigma_{\rroot}(t_0+1)|y(t_0)=0\bigr) = \tfrac
{1}{2}u(t_0)\sigma_{\rroot}(t_0+1) .
\]
If $k$ is even, two terms contribute to the sum:
$y(t_0) =+1$ and $y(t_0)=-1$, with
\begin{eqnarray*}
&&\K_{u(t_0)}\bigl(\sigma_{\rroot}(t_0+1)|y(t_0)=+1\bigr) -
\K_{0}\bigl(\sigma_{\rroot}(t_0+1)|y(t_0)=+1\bigr)\\
&&\qquad= -\tfrac
{1}{2}\sigma_{\rroot}(t_0+1)
\ind\bigl(u(t_0)=-1\bigr) ,
\end{eqnarray*}
\begin{eqnarray*}
&&\K_{u(t_0)}\bigl(\sigma_{\rroot}(t_0+1)|y(t_0)=-1\bigr) -
\K_{0}\bigl(\sigma_{\rroot}(t_0+1)|y(t_0)=-1\bigr)\\
&&\qquad= -\tfrac
{1}{2}\sigma_{\rroot}(t_0+1)
\ind\bigl(u(t_0)=+1\bigr) .
\end{eqnarray*}
Also, by (\ref{eq:sigma_i_mean}) we have
$\lim_{k\rightarrow\infty} \mu'(\sigma_\rroot) = \mu(\sigma
_\rroot)$
with $\mu( \cdot)$ defined as in (\ref{eq:ic_mu_defn}).
The induction hypothesis equation (\ref{eq:CovarianceMean}) on the covariance
of $\sigma_{\rroot}$ further implies
$\lim_{k\rightarrow\infty} V(\sigma_\rroot) = C_T$.
By the continuity of Gaussian distribution, we get
\[
\lim_{k\to\infty}\Phi_{\mu'(\sigma_{\rroot}), V(\sigma_{\rroot
})} (\A_{\infty}(\I))
= \Phi_{\mu(\sigma_{\rroot}), C_T} (\A_{\infty}(\I)) .
\]
Applying these remarks to (\ref{eq:AlmostFinalLemma}),
and using the fact that $\prob_0(\sigma_\rroot(0))=1/2$, we finally get
%
%
\begin{eqnarray}\label{eq:prob_diff_simplified}
&&\prob(\sigma_\rroot^{T+1}\midd u^{T+1})-\prob(\sigma
_\rroot^{T+1}\midd 0^{T+1})\nonumber\\[-8pt]\\[-8pt]
&&\qquad = \frac{1}{2\sqrt{k}}
\sum_{t_0=0}^T
\Phi_{ \mu(\sigma_\rroot), C_T}(\A_{\infty}(\I))
u(t_0)\sigma_\rroot(t_0+1)
\bigl(1+o(1)\bigr) .\nonumber
\end{eqnarray}

By symmetry, we have $\Ex_{\prob(\cdot\midd 0^{T+1})}[\sigma
_\rroot
(T+1)] = 0$.
By summing over $\sigma_\rroot^T$ equation (\ref{eq:prob_diff_simplified}),
we get
\begin{eqnarray*}
\hspace*{-4pt}&&\Ex_{\prob(\cdot\midd u^{T+1})}[\sigma_\rroot(T+1)]\\
\hspace*{-4pt}&&\qquad= \frac{1}{\sqrt{k}}
\sum_{t_0=0}^T u(t_0)
\biggl\{
\frac{1}{2} \sum_{\sigma_\rroot^{T+1}} \sigma_\rroot(T+1) \sigma
_\rroot(t_0+1)
\Phi_{\mu(\sigma_\rroot), C_T}(\A_{\infty}(\I))
\biggr\}
\bigl(1+o(1)\bigr) .
\end{eqnarray*}
It is easy to verify that the expression in parentheses
matches the one for $R(T+1,t_0)$ from Lemma \ref{lemma:RCexp}.
Therefore we proved
\[
\Ex_{\prob(\cdot\midd u^{T+1})}[\sigma_{\rroot}(T+1)]=
\frac{1}{\sqrt{k}} \sum_{s \in\I} u(s)R(T+1,s) + o\bigl(1/\sqrt{k}\bigr) ,
\]
which finishes the proof of the induction step.
\end{pf}

In the next section, we will use this estimate to prove
Theorem \ref{thm:biased_convergence}, which in particular implies
Theorem \ref{thm:unbiased_convergence}. Let us notice however that
Theorem \ref{thm:unbiased_convergence} admits a direct proof as a consequence
of the last lemma.
\begin{pf*}{Proof of Theorem \ref{thm:unbiased_convergence}}
As part of Lemma \ref{lemma:RCconvergence}, we have proved that
$\sigma_\rroot^T \stackrel{\mathrm{d}}{\rightarrow}
\tau^T$ for each ``fixed'' trajectory $u^T$;
see (\ref{eq:UnbiasedConv}).
In particular, this holds
for the extreme trajectories
$u_-^T=(-1, -1, \ldots, -1)$ and for $u_+^T=(+1, +1, \ldots, +1)$. By
monotonicity,
the true trajectory of a spin $\sigma_i$ in the regular
tree $\G$ lies between the
trajectories $\sigma_{\rroot,-}^T$ and $\sigma_{\rroot,+}^T$
distributed according to $\prob( \cdot\midd u_+^T )$
and $\prob( \cdot\midd u_-^T )$.
Since both $\sigma_{\rroot,-}^T$ and
$\sigma_{\rroot,+}^T$ converge in distribution to the cavity process
$\tau^T$, the original trajectory $\sigma_i^T$
converges to the cavity process as well.
\end{pf*}

%
%
\subsection{\texorpdfstring{Biased initialization:
Proof of Theorem \protect\ref{thm:biased_convergence}}{Biased initialization:
Proof of Theorem 2.8}}
\label{sec:biased_convergence}

In this subsection, we prove Theorem \ref{thm:biased_convergence}.
The proof is based on Lemmas \ref{lemma:qminusp} and
\ref{lemma:qminusptstar} that capture the asymptotic behavior
of the recursion (\ref{eq:Recursion}) as $k\to\infty$ in two different
regimes.

Throughout this subsection, we adopt a special notation to
simplify calculations. We reserve $\prob(\sigma_\rroot^T \midd
u^T)$
for the family of measures indexed by $u^T$ and introduced
in Section \ref{sec:ExactCavity}, in the case $\prob_0(\sigma
_{\rroot}(0)
=\pm1) = 1/2$.
We use instead $\qprob(\sigma_\rroot^T \midd u^T)$ when the
initialization is
\[
\qprob_0\bigl(\sigma_{\rroot}(0) = \pm1\bigr) = \frac{1}{2}\pm\frac{\bias
_0}{k^{(T_*+1)/2}} ,
\]
that is, when in the initial configuration, the spins of $\G_{\rroot
}$ are
i.i.d. Bernoulli with expectation $\E\sigma_i(0) = 2\bias_0k^{-(T_*+1)/2}$.

Before providing the formal argument, we will describe the
basic intuition.

\subsubsection{\texorpdfstring{Theorem
\protect\ref{thm:biased_convergence}: Basic intuition}{Theorem
2.8: Basic intuition}}

The proof relies on a delicate comparison between
the unbiased case (analyzed in the previous section) and
the biased case treated here. An obvious but important fact
is that, if the initialization is unbiased, then the distribution
of $\sigma_i^T$ is exactly symmetric under inversion of all the spins.

How does the evolution with initial bias $\theta= 2 \bias_0 / k^{(T_*+1)/2}$,
$\bias_0 >0 $, differ from the unbiased initialization trajectory?
Consider the coupling between these two processes constructed as follows.
Initialize the two processes by drawing the initial spins at each vertex
according to the optimal coupling between a Bernoulli$(1/2)$ and
a Bernoulli$(\theta)$ random variable. At each subsequent step, the
new spin values are either chosen deterministically, or according
to a fair coin toss (in the case of a tie). In the former case, the
coupling is obvious. In the latter---that is, if a tie occurs in both processes
at the same vertex---the coupling is constructed by using the same coin.
Notice that this coupling is monotone: for $\theta>0$
at each time the biased process dominates the unbiased one.

Denote by $\sigma_i(t)$ the spin at node $i$ in the unbiased dynamics and
by $\sigma'_i(t)$ the spin at node $i$ in the biased dynamics.
Since the coupling is monotone, the two processes
can disagree at node $i$ and time $t$ only if $\sigma_i'(t)=+1$
and $\sigma_i(t)=-1$.

For concreteness, let us consider $T_*=2$, that is, $\theta= 2 \bias_0
/ k^{3/2}$.
At time $t=0$, of the $k$ neighbors of an arbitrary node $i$, on average
$\bias_0 / k^{1/2}$ of them will be different in the two processes.
Further, each neighbor will disagree or not be independent of the others.
Since $\bias_0 / k^{1/2}$ is much smaller than $1$,
most often no neighbors will differ, occasionally $1$ neighbor will
differ and very rarely more than $1$ neighbor will differ. For simplicity,
assume $k$ is odd.
The main event leading to $\sigma_i(t) \neq\sigma'_i(t)$ at $t=1$
will be the following: the two process disagree on one neighbor of $i$
(call this event $\Ev_1$),
and the spins that do not disagree across the two processes
add up to $0$ (call this $\Ev_2$).
Since $\E[\sigma_i(1)]=0$ exactly,
we have
\[
\E[\sigma'_i(1)] = 2 \prob\{\sigma'_i(1)\neq\sigma_i(1)\}\approx
2 \prob(\Ev_1) \prob(\Ev_2) .
\]
The probability of $\Ev_1$ is estimated by the expected number of
disagreements $\prob(\Ev_1)\approx\bias_0 / k^{1/2}$.
The probability of a near tie on the other spins is
instead estimated through a Gaussian approximation
as at the beginning of Section \ref{sec:unbiased_convergence},
$\prob(\Ev_1)\approx\prob(\sqrt{k} Z\in[-1,1]) \approx\sqrt
{2/(\pi k)}$.
One gets therefore
\[
\E[\sigma'_i(1)] \approx2 R(1,0) \frac{\bias_0}{k}\equiv
\frac{2\bias_1}{k} ,
\]
where $\bias_1$ is defined as in the statement of Theorem
\ref{thm:biased_convergence}.

The next steps follow along the same lines.
Consider the neighbors of $i$ at time $t=1$. The two processes
have a probability close to $\bias_1/k$ of disagreeing.
Assuming that disagreements are again roughly independent of each other,
we expect $O (\log k)$ disagreements at most.
This leads, by an argument similar to the above, to
$\E[\sigma'_i(2)] \approx2 R(2,1) \bias_1/\sqrt{k}
\equiv2\bias_2/\sqrt{k}$.

Finally, consider the neighbors of $i$ at time $t=2$.
We expect the two processes to disagree---on average---on $\sqrt
{k}\bias_2$ neighbors.
The two processes still agree on most of the neighbors but the sum of these
spins is also---by central limit theorem---of order
$\sqrt{k}$. This leads to $\E[\sigma'_i(3)] = \Theta(1)$.
Continuing for one more step, we get $\E[\sigma'_i(4)] \approx1$.

The next section will make this calculation rigorous, the main challenge
being of course a precise control of dependencies.
The simple coupling argument above is insufficient to achieve this
goal.
We will instead use once more the exact cavity recursion
(\ref{eq:Recursion}), together with appropriate analytical arguments.
%
%
\subsubsection{\texorpdfstring{Theorem
\protect\ref{thm:biased_convergence}: The actual proof}{Theorem
2.8: The actual proof}}

The above intuition is rigorized by Lemmas \ref{lemma:qminusp} and
\ref{lemma:qminusptstar}. We use the modified dynamics introduced
in Section \ref{sec:ExactCavity}, so that
the exact cavity recursion in Lemma \ref{lemma:ExactCavity} can be used.
\begin{lemma}\label{lemma:qminusp}
For $\sigma^T\in\{\pm1\}^{T+1}$, let $\I_+ = \{t\dvtx\sigma
(t+1)=+1\}$,
$\I_- = \{t\dvtx\sigma(t+1)=-1\}$ and $\I_0 = \{T\}$. Define
%
%
\begin{equation}\label{eq:ITDef}
I_T(\sigma^{T}) = \Phi_{\mu(\sigma),C_T}(\A_{\infty}(\I
)) ,
\end{equation}
where
$\mu(\sigma)=(\mu_0(\sigma), \ldots, \mu_T(\sigma))$ with
$\mu_r(\sigma)=\sum_{s=0}^{r-1}R(r,s)\sigma(s)$.
Set by definition $I_{-1} = 1$.
Finally, for $ 0 \le T < T_*-1$, define $\bias_{T+1}$ recursively by
%
%
\begin{equation}\label{eq:omega_recursion}
\bias_{T+1} = R(T+1,T) \bias_{T} .
\end{equation}
Then, for $ 0 \le T < T_*-1$ and for all
$\sigma_\rroot^{T+1}, u^{T+1} \in\{\pm1\}^{T+2}$, we have
%
%
\begin{eqnarray}\label{eq:qminusp}
&&\qprob(\sigma_\rroot^{T+1}\midd u^{T+1}) -
\prob(\sigma_\rroot^{T+1}\midd u^{T+1})\nonumber\\[-8pt]\\[-8pt]
&&\qquad=\frac{\bias
_T}{k^{(T_*-T)/2}} \sigma_\rroot(T+1) I_T(\sigma_\rroot
^{T}) \bigl(1+o(1)\bigr) .\nonumber
\end{eqnarray}
Further, for all $u^{T+1} \in\{\pm1\}^{T+2}$, we have
%
%
\begin{eqnarray}\label{eq:sumqminusp}
&&\sum_{\sigma_\rroot^{T+1}}\sigma_\rroot(T+1)\{
\qprob(\sigma_\rroot^{T+1}\midd u^{T+1}) -
\prob(\sigma_\rroot^{T+1}\midd u^{T+1})\}\nonumber\\[-8pt]\\[-8pt]
&&\qquad
=\frac{2\bias_{T+1}}{k^{(T_*-T)/2}}\bigl(1+o(1)\bigr) .\nonumber
\end{eqnarray}
\end{lemma}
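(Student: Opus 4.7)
The plan is to prove (\ref{eq:qminusp}) by induction on $T$, from which (\ref{eq:sumqminusp}) follows as an immediate corollary: summing $\sigma_\root(T+1)$ against (\ref{eq:qminusp}) kills the sign (using $\sigma_\root(T+1)^2 = 1$), yielding $2(\omega_T/k^{(T_*-T)/2}) I_T((\sigma_\root)_0^T)$; summing then over $(\sigma_\root)_0^T$ and using Eq.~(\ref{eq:R_t_tplus1}) together with the defining recursion $\omega_{T+1} = R(T+1, T)\omega_T$ produces $2\omega_{T+1}/k^{(T_*-T)/2}$. The natural base case is $T = -1$, corresponding directly to the biased initialization: $\qprob_0(\sigma(0)) - \prob_0(\sigma(0)) = (\omega_0/k^{(T_*+1)/2}) \sigma(0)$, which matches the formula with $I_{-1} \equiv 1$.

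For the induction step, apply the exact cavity recursion (\ref{eq:Recursion}) to both $\qprob((\sigma_\root)_0^{T+1}||u_0^{T+1})$ and $\prob((\sigma_\root)_0^{T+1}||u_0^{T+1})$ and take the difference. The induction hypothesis at step $T-1$ (via the second statement) implies that under $\qprob$ each neighbor spin has shifted conditional mean $\E_\qprob[\sigma_i(t)\,||\,(\sigma_\root)_0^T] = \E_\prob[\sigma_i(t)\,||\,(\sigma_\root)_0^T] + 2\omega_t/k^{(T_*-t+1)/2}(1+o(1))$ for $t \le T$. Independence of the $k-1$ neighbors under the product measure then implies that the CLT-rescaled sum $\sum_{i=1}^{k-1}\sigma_i(t)/\sqrt{k-1}$ has Gaussian mean shifted from $\mu_t(\sigma_\root)$ under $\prob$ to $\mu_t(\sigma_\root) + \delta_t$ under $\qprob$, with $\delta_t \equiv 2\omega_t/k^{(T_*-t)/2}$; among these, $\delta_T$ is the largest and shifts $\delta_s$ for $s < T$ are smaller by factors $k^{-(T-s)/2}$.

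Applying Theorem~\ref{thm:sum_approx} exactly as in the proof of Lemma~\ref{lemma:RCconvergence} replaces the sum over neighbor configurations by a Gaussian probability. The difference $\qprob - \prob$ becomes, to leading order, $\frac{1}{2}[\Phi_{\mu + \delta, C_T}(A_\infty(\tilde\I)) - \Phi_{\mu, C_T}(A_\infty(\tilde\I))]$, where $\tilde\I$ is determined by the signs of $(\sigma_\root)_1^{T+1}$. First-order Taylor expansion in the shift gives $\frac{1}{2}\sum_{t=0}^T \delta_t \partial_{\mu_t}\Phi_{\mu, C_T}(A_\infty(\tilde\I))$. Only the $t = T$ term is of the target order $k^{-(T_*-T)/2}$; the others are smaller by $k^{-(T-s)/2}$. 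By the fundamental theorem of calculus, $\partial_{\mu_T}\Phi_{\mu, C_T}(A_\infty(\tilde\I))$ equals $I_T((\sigma_\root)_0^T)$ with sign $\sigma_\root(T+1)$ (positive if $T\in\tilde\I_+$, negative if $T\in\tilde\I_-$). Substituting $\delta_T = 2\omega_T/k^{(T_*-T)/2}$ and multiplying by the prefactor $1/2$ from $\prob_0(\sigma_\root(0))$ yields the claimed $(\omega_T/k^{(T_*-T)/2})\sigma_\root(T+1)I_T((\sigma_\root)_0^T)$. The contribution of the root's initial perturbation, $(\qprob_0 - \prob_0)(\sigma_\root(0)) = O(k^{-(T_*+1)/2})$, is smaller than the target by a factor $k^{(T+1)/2}$ and hence negligible for $T \ge 0$.

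The main obstacles are uniform control of the various error terms. Three delicate points require care: (i) second-order Taylor corrections in the shifts are $O(\delta_T^2) = O(k^{-(T_*-T)})$, which is $o(k^{-(T_*-T)/2})$ for $T < T_*$; (ii) the discrete ``tie'' regions $\sum_j \sigma_j(t) \in \{-1, 0, +1\}$ inside the $\K$ kernels interact with the $|\I_0|=1$ correction in Theorem~\ref{thm:sum_approx} at precisely the $1/\sqrt{k}$ scale driving the analysis, requiring careful matching between the $\qprob$ and $\prob$ computations in these regions (analogously to how the $\K_u - \K_0$ difference was handled in Lemma~\ref{lemma:RCconvergence}); (iii) quadratic and higher-order terms in the multilinear expansion $\prod_i(\prob + \Delta_i) - \prod_i \prob$ contribute $O(k^{-\ell(T_*-T-1)/2})$ at order $\ell$, and the assumption $T < T_*-1$ is essential precisely because it makes $(T_*-T-1)/2$ strictly positive, ensuring these terms are genuinely subdominant. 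This is the point at which the present analysis breaks down at $T = T_*-1$, necessitating the separate treatment by Lemma~\ref{lemma:qminusptstar}.
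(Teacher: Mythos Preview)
Your overall strategy matches the paper's: induction on $T$, difference of cavity recursions, and the derivation of (\ref{eq:sumqminusp}) from (\ref{eq:qminusp}) via (\ref{eq:R_t_tplus1}). However, the central computational step as you present it does not go through.

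You propose to apply the CLT separately to the $\qprob$- and $\prob$-recursions, obtaining orthant probabilities $\tfrac12\Phi_{\mu+\delta,C_T}$ and $\tfrac12\Phi_{\mu,C_T}$, and then subtract and Taylor-expand in $\delta$. The problem is that Theorem~\ref{thm:sum_approx} with $|\I_0|=0$ controls each orthant probability only up to a multiplicative $(1+O(k^{-1/2}))$ error. Subtracting two such approximations leaves an uncontrolled additive error of order $k^{-1/2}$, which swamps the target $\delta_T = O(k^{-(T_*-T)/2})\le O(k^{-1})$ for $T<T_*-1$. The two $O(k^{-1/2})$ error terms depend on the (slightly different) child distributions under $\qprob$ and $\prob$, and nothing in Theorem~\ref{thm:sum_approx} forces them to cancel to the required precision.

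The paper avoids this by making your point (iii) the \emph{primary} tool rather than a secondary check. It first expands $\prod_i\qprob-\prod_i\prob=\sum_{r\ge1}\D(r,k)$ multilinearly, and then analyzes $\D(1,k)$ directly: because $\sum_{(\sigma_1)_0^T}\big(\qprob-\prob\big)((\sigma_1)_0^T\,||\,\cdot)=0$ by normalization, only configurations of $(\sigma_2)_0^T,\dots,(\sigma_{k-1})_0^T$ where some kernel $\K_{u(t_0)}$ is sensitive to $\sigma_1$ survive --- precisely the tie events $|\sum_{i\ge2}\sigma_i(t_0)+u(t_0)|\le 1$. On these, Theorem~\ref{thm:sum_approx} is applied with $|\I_0|=1$ to the \emph{unperturbed} $(k-2)$-fold $\prob$-product only, yielding $J_{t_0}=k^{-1/2}\,\sigma_\root(t_0+1)\sigma_1(t_0)\,J^*_{t_0}\,(1+o(1))$; combined with the $(k-1)$ prefactor and the induction hypothesis on the single $(\qprob-\prob)$ factor, the $t_0=T$ term produces exactly the claimed $k^{-(T_*-T)/2}$ with a \emph{multiplicative} $(1+o(1))$. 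Your Taylor-derivative heuristic is the continuum shadow of this computation (the boundary density $\partial_{\mu_T}\Phi$ is the tie-event limit), but the discrete route is what keeps the CLT error multiplicative on an already-small quantity rather than additive on an $O(1)$ quantity.

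A second, smaller gap: your bound $|\D(r,k)|=O(k^{-r(T_*-T-1)/2})$ is correct, but for $r=2$ and $T=T_*-2$ it gives only $O(k^{-1})$, which is not $o(k^{-(T_*-T)/2})=o(k^{-1})$. The paper treats this case separately, again invoking normalization to restrict to tie events and gain an extra factor $k^{-1/2}$.
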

\begin{pf}
The proof is by induction over $T$, for $0\le T<T_*-1$, whereby
in the base case ($T+1=0$), (\ref{eq:qminusp}) corresponds to
%
%
\begin{equation}
\qprob_0(\sigma_\rroot(0)) -
\prob_0(\sigma_\rroot(0))=\frac{\bias_0}{k^{(T_*+1)/2}} \sigma
_{\rroot}(0)
\bigl(1+o(1)\bigr)
\end{equation}
and holds by definition.
Making use of (\ref{eq:Recursion}) for both $\prob$ and $\qprob
$, we get
%
%
\begin{eqnarray}\label{eq:qminusp2}
&&\qprob(\sigma_\rroot^{T+1}\midd u^{T+1}) -\prob(\sigma
_\rroot^{T+1}\midd u^{T+1})
\nonumber\\[-2pt]
&&\qquad= \qprob_0(\sigma_\rroot(0))
\sum_{\sigma_1^{T}\cdots\sigma_{k-1}^{T}}
\prod_{t=0}^T \K_{u(t)}\bigl(\sigma_{\rroot}(t+1)|\sigma_{\droot
}(t)\bigr) \prod_{i=1}^{k-1}\qprob(\sigma_i^{T}\midd \sigma
_\rroot^{T})\nonumber\\[-2pt]
&&\qquad\quad{}-\prob_0(\sigma_\rroot(0))
\sum_{\sigma_1^{T}\cdots\sigma_{k-1}^{T}}
\prod_{t=0}^T \K_{u(t)}\bigl(\sigma_{\rroot}(t+1)|\sigma_{\droot
}(t)\bigr)
\prod_{i=1}^{k-1}\prob(\sigma_i^{T}\midd \sigma_\rroot^{T}
)\nonumber\\[-10pt]\\[-10pt]
&&\qquad= \frac{1}{2}
\sum_{\sigma_1^{T}\cdots\sigma_{k-1}^{T}}
\prod_{t=0}^T \K_{u(t)}\bigl(\sigma_{\rroot}(t+1)|\sigma_{\droot
}(t)\bigr)
\nonumber\\[-2pt]
&&\qquad\quad\hphantom{\frac{1}{2}
\sum_{\sigma_1^{T}\cdots\sigma_{k-1}^{T}}}
{}\times
\Biggl\{\prod_{i=1}^{k-1}\qprob(\sigma_i^{T}\midd \sigma_\rroot
^{T})- \prod_{i=1}^{k-1}\prob(\sigma_i^{T}\midd \sigma
_\rroot^{T})\Biggr\}\nonumber\\[-2pt]
&&\qquad\quad{}+O\bigl(k^{-(T_*+1)/2}\bigr) .\nonumber
\end{eqnarray}
%
%
Now we use
\begin{eqnarray*}
&&\prod_{i=1}^{k-1}\qprob(\sigma_i^{T}\midd \sigma_\rroot^{T})-
\prod_{i=1}^{k-1}\prob(\sigma_i^{T}\midd \sigma_\rroot^{T})
\\[-2pt]
&&\qquad=\sum_{r=1}^{k-1} \pmatrix{k-1\cr r} \prod_{i=1}^r \{\qprob
(\sigma_i^{T}\midd\sigma_\rroot^T)- \prob(\sigma
_i^{T}\midd\sigma_\rroot^T)\} \\[-2pt]
&&\qquad\quad\hphantom{\sum_{r=1}^{k-1}}
\times{}\prod_{i=r+1}^{k-1}\prob
(\sigma_i^{T}\midd \sigma_\rroot^{T})
\end{eqnarray*}
in (\ref{eq:qminusp2}) to obtain
%
%
\begin{equation}
\qprob(\sigma_\rroot^{T+1}\midd u^{T+1}) -\prob(\sigma
_\rroot^{T+1}\midd u^{T+1})
= \frac{1}{2}\sum_{r=1}^{k-1}\D(r,k) + O \bigl(k^{-(T_*+1)/2}\bigr) ,
\end{equation}
where
%
%
\begin{eqnarray}
\D(r,k) &\equiv&\pmatrix{k-1\cr r} \sum_{\sigma_1^{T}\cdots\sigma
_{k-1}^{T}}
\prod_{t=0}^T \K_{u(t)}\bigl(\sigma_{\rroot}(t+1)|\sigma_{\droot
}(t)\bigr) \cdot
\\[-2pt]
\label{eq:Drk_defined}
&&\hphantom{\pmatrix{k-1\cr r} \sum_{\sigma_1^{T}\cdots\sigma
_{k-1}^{T}}}
{}\times\prod_{i=1}^r \{\qprob(\sigma_i^{T}\midd\sigma
_\rroot^T)- \prob(\sigma_i^{T}\midd\sigma_\rroot^T
)\} \nonumber\\[-10pt]\\[-10pt]
&&\hphantom{\pmatrix{k-1\cr r} \sum_{\sigma_1^{T}\cdots\sigma
_{k-1}^{T}}}
{}\times\prod_{i=r+1}^{k-1}\prob(\sigma_i^{T}\midd \sigma
_\rroot^{T}) .\nonumber
\end{eqnarray}

We claim that only the term $r=1$ is relevant for large $k$:
%
%
\begin{equation}\label{eq:ClaimLemma}
\sum_{r=2}^{k-1}|\D(r,k)| = o\bigl(k^{-(T_*-T)/2}\bigr) .
\end{equation}
Before proving this claim, let us show that it implies the thesis.
Set $r_0=1$ (we introduce this notation because the calculation below holds
for larger values of $r_0$ and this fact will be exploited in the
next lemma).

The $r=1$ term can be rewritten as
\begin{eqnarray*}
\D(1,k)&=&(k-1) \sum_{\{\sigma_i^{T}\}}
\prod_{t=0}^T
\K_{u(t)}\bigl(\sigma_{\rroot}(t+1)|\sigma_{\droot}(t+1)\bigr)\\
&&\hphantom{(k-1) \sum_{\{\sigma_i^{T}\}}}
{}\times
\{\qprob((\sigma_1)^{T}\midd\sigma_\rroot^T)- \prob
((\sigma_1)^{T}\midd\sigma_\rroot^T)\}\\
&&\hphantom{(k-1) \sum_{\{\sigma_i^{T}\}}}
{}\times\prod
_{i=2}^{k-1}\prob(\sigma_i^{T}\midd
\sigma_\rroot^{T}) .
\end{eqnarray*}
For $t \in\{0,1,\ldots,T\}$, let
%
%
\begin{equation}\label{eq:Sdef}
\SSS_t\equiv
\{(\sigma_{2})^{T}\cdots(\sigma_{k-1})^{T}\dvtx|\sigma
_{2}(t)+\cdots+\sigma_{k-1}(t)+u(t)|\le r_0\} .
\end{equation}
If $(\sigma_{2})^{T}\cdots(\sigma_{k-1})^{T}$ is
not in $\bigcup_{t=0}^T \SSS_t$, then the sum
over $(\sigma_{1})^{T}$ can be evaluated immediately
[as $\K_{u(t)}(\cdots)$ is independent of $(\sigma_1)^T$]
and is equal to $0$ due to the normalization of
$\qprob( \cdot\midd\sigma_\rroot^T)$ and
$\prob( \cdot\midd\sigma_\rroot^T)$.
We can restrict the innermost sum to
$(\sigma_{2})^{T}\cdots(\sigma_{k-1})^{T}$
in $\bigcup_{t=0}^T \SSS_t$, that is,
$|{\sum}_{i=2}^{k-1}\sigma_i(t)+ u(t)|\le r_0$ for some
$t \in\{0,\ldots, T\}$. Let $\I_0\subseteq\{0,\ldots, T\}$ be the
set of times
such that this happens.

The expectation over
$(\sigma_2)^{T},\ldots,(\sigma_{k-1})^T$ can be
estimated applying Theorem~\ref{thm:sum_approx},
with $N=k-2$, and using Lemmas \ref{lemma:cavity_non_degenerate}
and \ref{lemma:RCconvergence} to check that the hypotheses
\ref{thm:sum_approx} hold for all $k$ large enough. Using\vspace*{1pt} the
induction hypothesis
$|\qprob((\sigma_1)^{T}\midd\sigma_\rroot^T)-
\prob((\sigma_1)^{T}\midd\sigma_\rroot^T)|=O(k^{-(T_*-T+1)/2})$,
this implies that the contribution of terms with $|\I_0|\ge2$
is upper bounded as $kO(k^{-(T_*-T+1)/2})^2=o(k^{-(T_*-T)/2})$
(for $T\le T_*-1$).
Therefore, we make a negligible error if we restrict ourselves
to the case $|\I_0|=1$.

If we let $\Sh_{t_0}\equiv
\SSS_{t_o}\cap\{\bigcap_{t\neq t_0}\overline{\SSS}_t\} $, we then have
%
%
\begin{eqnarray}\label{eq:Dr1_simple}
\D(1,k) &\equiv&
(k-1) \sum_{t_0=0}^T\sum_{(\sigma_1)^{T}} \{\qprob
((\sigma_1)^{T}\midd\sigma_\rroot^T)-
\prob((\sigma_1)^{T}\midd\sigma_\rroot^T)\}
\nonumber\\
&&\hphantom{(k-1) \sum_{t_0=0}^T\sum_{(\sigma_1)^{T}}}
{}\times\sum_{((\sigma_{2})^{T}\cdots(\sigma_{k-1})^{T})\in\Sh
_{t_0}} \prod_{i=2}^{k-1}
\prob(\sigma_i^{T}\midd \sigma_\rroot^{T}) \nonumber\\[-8pt]\\[-8pt]
&&\hphantom{{}\times(k-1) \sum_{t_0=0}^T\sum_{(\sigma_1)^{T}}\sum_{((\sigma_{2})^{T}\cdots(\sigma_{k-1})^{T})\in\Sh
_{t_0}}}
{}\times\prod_{t=0}^T
\K_{u(t)}\bigl(\sigma_{\rroot}(t+1)|\sigma_{\droot}(t)\bigr)\nonumber\\
&&{} +
o\bigl(k^{-(T_*-T)/2}\bigr) .\nonumber
\end{eqnarray}

Consider the main term
%
%
\begin{eqnarray}\label{eq:Jdef}
&&J'_{t_0}(\sigma_\rroot^{T},(\sigma_{1})^{T})\nonumber\\[-8pt]\\[-8pt]
&&\qquad\equiv\sum_{((\sigma_{2})^{T}\cdots(\sigma_{k-1})^{T})\in\Sh
_{t_0}} \prod_{i=2}^{k-1}
\prob(\sigma_i^{T}\midd \sigma_\rroot^{T}) \prod_{t=0}^T
\K_{u(t)}\bigl(\sigma_{\rroot}(t+1)|\sigma_{\droot}(t)\bigr).\nonumber
\end{eqnarray}
The arguments of this function will often be dropped in what follows,
and we
will simply write $J'_{t_0}$. For $t\neq t_0$, the kernel
$\K_{u(t)}(\sigma_{\rroot}(t+1)|\sigma_{\droot}(t))$ can
be replaced by an
indicator function, and the constraint
$((\sigma_{2})^{T}\cdots(\sigma_{k-1})^{T})\in\Sh_{t_0}$ can be
removed. For $t=t_0$ we write
\[
\K_{u(t_0)}\bigl(\sigma_{\rroot}(t_0+1)|\sigma_{\droot}(t_0)\bigr) =
\hK_{\Omega(t_0)}'
\Biggl\{\sigma_{\rroot}(t_0+1)\Biggl(u(t_0)+\sum_{i=2}^{k-1}\sigma
_i(t_0)\Biggr)\Biggr\},
\]
where
\[
\hK_a'(x)=\cases{
1, &\quad if $-a<x\le r_0$,\cr
1/2, &\quad if $x=-a$,\cr
0, &\quad otherwise,}
\]
and $\Omega(t) = \sigma_{\rroot}(t+1)\sigma_1(t)$,
$|\Omega(t)|\le r_0$. We thus have
\begin{eqnarray*}
J'_{t_0}
&=&\sum_{(\sigma_{2})^{T}\cdots(\sigma_{k-1})^{T}} \prod_{i=2}^{k-1}
\prob(\sigma_i^{T}\midd \sigma_\rroot^{T})
\hK'_{\Omega(t_0) }\Biggl\{\sigma_{\rroot}(t_0+1)\Biggl(u(t_0)+\sum
_{i=2}^{k-1}\sigma_i(t_0)\Biggr)
\Biggr\} \\
&&\hphantom{\sum_{(\sigma_{2})^{T}\cdots(\sigma_{k-1})^{T}}}
{}\times\prod_{t=0}^T\ind\Biggl\{\sigma_{\rroot}(t_0+1)
\Biggl(u(t)+\sum_{i=2}^{k-1}\sigma_i(t_0)\Biggr)>r_0 \Biggr\} .
\end{eqnarray*}
Notice that the only dependence on $(\sigma_1)^T$ is through $\Omega(t_0)$.
Therefore, we can replace
$\hK_{\Omega(t_0) }'\{ \cdot\}$ by $\hK_{\Omega(t_0) }\{ \cdot
\}=
\hK'_{\Omega(t_0) }\{ \cdot\}-\hK_{0}'\{ \cdot\}$
because the difference, once integrated over $(\sigma_1)^T$
as in (\ref{eq:Dr1_simple}), vanishes by the
normalization of $\qprob( \cdot\midd\sigma_\rroot^T)$ and
$\prob( \cdot\midd\sigma_\rroot^T)$. We thus need to evaluate
\begin{eqnarray*}
J_{t_0}
&=&\sum_{(\sigma_{2})^{T}\cdots(\sigma_{k-1})^{T}} \prod_{i=2}^{k-1}
\prob(\sigma_i^{T}\midd \sigma_\rroot^{T})
\hK_{\Omega(t_0) }\Biggl\{\sigma_{\rroot}(t_0+1)\Biggl(u(t_0)+\sum
_{i=2}^{k-1}\sigma_i(t_0)\Biggr)
\Biggr\} \\
&&\hphantom{\sum_{(\sigma_{2})^{T}\cdots(\sigma_{k-1})^{T}}}
{}\times\prod_{t=0}^T\ind\Biggl\{\sigma_{\rroot}(t+1)\Biggl(
u(t)+\sum_{i=2}^{k-1}\sigma_i(t)\Biggr)> r_0 \Biggr\} ,
\end{eqnarray*}
where, for $a>0, a \in\mathbb{Z}$
\begin{eqnarray*}
\hK_a(x)&=&\cases{
1, &\quad if $-a<x<0$,\cr
1/2, &\quad if $x=-a$ or $x=0$,\cr
0, &\quad otherwise,}
\\
\hK_{-a}(x)&=&\cases{
-1, &\quad if $0<x<-a$,\cr
-1/2, &\quad if $x=0$ or $x=-a$,\cr
0, &\quad otherwise.}
\end{eqnarray*}
Notice that $\sum_{x \in\mathbb{Z}} \hK_a(x) = a \ \forall a \ge-r_0$.

We apply Theorem \ref{thm:sum_approx} for any value of
$s(t_0)\equiv\sum_{i=2}^{k-1}\sigma_i(t_0)$ such that
$\hK_{\Omega(t_0) }\{ \cdot\}$ is nonvanishing, and then sum over
these values.
Notice that\break $|{\sum}_{i=2}^{k-1}\sigma_i(t_0)|\le r_0+1$
and therefore the central limit theorem (Theorem \ref{thm:sum_approx}) applies.
The leading order terms are all independent of $s(t_0)$.
The $O(1/k^{1/4})$ error term in (\ref{eq:sum_approx}) is
multiplied by a
factor $r_0$ and remains therefore negligible.
We get
%
%
\begin{eqnarray}
J_{t_0} & = &\frac{1}{\sqrt{k}}
\sigma_{\rroot}(t_0+1)\sigma_1(t_0)
\Phi_{\mu(\sigma_{\rroot}),C_T}(\A_{\infty}(\I))
\bigl(1+o(1)\bigr)\\
\label{eq:Jestimate}
&\equiv& \frac{1}{\sqrt{k}}
\sigma_{\rroot}(t_0+1)\sigma_1(t_0)
J_{t_0}^* \bigl(1+o(1)\bigr) ,
\end{eqnarray}
where\vspace*{1pt}
$\mu(\sigma)=(\mu_0(\sigma), \ldots, \mu_T(\sigma))$ with
$\mu_r(\sigma)=\sum_{s=0}^{r-1}R(r,s)\sigma(s)$,
and $\I_+ = \{t \dvtx\sigma_{\rroot}(t)=+1\}\setminus\{t_0\}$,
$\I_- = \{t \dvtx\sigma_{\rroot}(t)=-1\}\setminus\{t_0\}$ and
$\I_0 = \{t_0\}$. Notice that, in particular $J^*_{t_0=T}=
I_T(\sigma_\rroot^T)$.

If we use this estimate in (\ref{eq:Dr1_simple}), we get
\begin{eqnarray*}
\D(1,k)&=& (k-1)\sum_{t_0=0}^T\sigma_1(t_0)\{\qprob
(\sigma_i^{T}\midd(\sigma_\rroot)_0^T)- \prob(\sigma
_i^{T}\midd\sigma_\rroot^T)\}\\
&&\hphantom{(k-1)\sum_{t_0=0}^T}
{}\times\frac{J^*_{t_0}}{\sqrt{k}} \sigma_{\rroot}(t_0+1)
\bigl(1+ o(1)\bigr) + o\bigl(k^{-(T_*-T)/2}\bigr) \\
&=&k\sum_{t_0=0}^T \frac{2\bias_{t_0}}{k^{(T_*-t_0+1)/2}}\frac
{J^*_{t_0}}{\sqrt{k}} \sigma_{\rroot}(t_0+1)
\bigl(1+ o(1)\bigr) + o\bigl(k^{-(T_*-T)/2}\bigr) \\
&=& I_T(\sigma_\rroot^T) \frac{2\bias_{T}}{k^{T_*-T}}
\sigma_{\rroot}(t_0+1) \bigl(1+ o(1)\bigr) ,
\end{eqnarray*}
which, along with (\ref{eq:R_t_tplus1}) implies the thesis
equation (\ref{eq:qminusp}).

Let us now prove the claim (\ref{eq:ClaimLemma}).
Recall that induction hypothesis we have
$\qprob(\sigma_i^{T}\midd\sigma_\rroot^T)-
\prob(\sigma_i^{T}\midd\sigma_\rroot^T) = O(k^{-(T_*-T+1)/2})$.
Since $|\K_{u(t)}(\sigma_{\rroot}(t+1)|\sigma_{\droot}(t)
)|\le1$,
this implies
\begin{eqnarray*}
|\D(r,k)|&\le& k^r \sum_{(\sigma_{1})^{T}\cdots(\sigma_{r})^{T}}
\prod_{i=1}^{r}\bigl|\qprob(\sigma_i^{T}\midd\sigma_\rroot^T
)- \prob(\sigma_i^{T}\midd\sigma_\rroot^T)\bigr|\\
&=&
O\bigl(k^{-r(T_*-T-1)/2}\bigr) .
\end{eqnarray*}
Since $T_*-T-1\ge1$, we have
\[
\sum_{r=3}^{k-1}|\D(r,k)| = O\bigl(k^{-3(T_*-T-1)/2}\bigr)= o\bigl(k^{-(T_*-T)/2}\bigr) .
\]
Further, $|\D(2,k)| = O(k^{-(T_*-T-1)})=o(k^{-(T_*-T)/2})$ unless
$T=T_*-2$.

In order to argue in the $r=2$, $T=T_*-2$ case, we will proceed analogously
to $r=1$. Consider the definition of $\D(2,k)$
in (\ref{eq:Drk_defined}). If $(\sigma_{3})^{T},\ldots,(\sigma_{k-1})^{T}$
are such that
$|{\sum}_{i=3}^{k-1}\sigma_i(t)+u(t)|> 2$ for all $t\in\{0,\ldots,T\}$
then the factors $\K_{u(t)}(\sigma_{\rroot}(t+1)|\sigma
_{\droot}(t))$
become independent of $(\sigma_{1})^T$,
$(\sigma_{2})^T$. We can therefore carry out the sum over these variables
obtaining
\begin{eqnarray*}
&&\sum_{(\sigma_{1})^{T},(\sigma_{2})^{T}}\prod_{i=1}^2 \{
\qprob(\sigma_i^{T}\midd\sigma_\rroot^T)- \prob(\sigma
_i^{T}\midd\sigma_\rroot^T)\}\\
&&\qquad=\prod_{i=1}^r \sum_{(\sigma_{i})^{T}} \{
\qprob(\sigma_i^{T}\midd\sigma_\rroot^T)- \prob(\sigma
_i^{T}\midd\sigma_\rroot^T)\} = 0 ,
\end{eqnarray*}
because both $\qprob( \cdot\midd\sigma_\rroot^T)$ and
$\qprob( \cdot\midd\sigma_\rroot^T)$ are normalized.
Therefore, we can restrict the sum to those
$(\sigma_{3})^{T},\ldots, (\sigma_{k-1})^{T}$ such that
$|{\sum}_{i=3}^{k-1}\sigma_i(t_0)+u(t_0)|\le2$ for at least
one $t_0\in\{0,\ldots,T\}$.
However, analogously to the case $r=1$, the probability
that this happens for the i.i.d. nondegenerate random vectors
$(\sigma_{3})^{T}\cdots(\sigma_{k-1})^{T}$ is at most $O(k^{-1/2})$,
using Theorem \ref{thm:sum_approx}.
Together with the induction hypothesis,
this yields $|\D(2,k)| = O(k^{-1/2}\cdot k^{-(T_*-T-1)})=o(k^{-(T_*-T)/2})$,
which proves the claim.

Finally, (\ref{eq:sumqminusp}) follows from (\ref{eq:qminusp})
using the definitions (\ref{eq:ITDef}), (\ref{eq:omega_recursion})
and the identity (\ref{eq:RespRecursion}).
\end{pf}

The next lemma says that Lemma \ref{lemma:qminusp} extends to $T=T_*-1$.
Since this case requires a different (more careful) calculation,
we state it separately, although the conclusion is the same as
for $T<T_*-1$. The proof is in Appendix~\ref{app:qminusptstar}.
\begin{lemma}\label{lemma:qminusptstar}
Let $I_T(\sigma^{T})$ be defined as in Lemma \ref{lemma:qminusp}, and
define $\bias_{T_*}$ by
%
%
\begin{equation}
\bias_{T_*} = R(T_*,T_*-1) \bias_{T_*-1} .
\end{equation}
Then, for all
$\sigma_\rroot^{T_*}, u^{T_*} \in\{\pm1\}^{T_*+1}$, we have
%
%
\begin{equation}\label{eq:qminusptstar}
\qprob(\sigma_\rroot^{T_*}\midd u^{T_*}) - \prob(\sigma
_\rroot^{T_*}\midd u^{T_*})=\frac{\bias_{T_*-1}}{k^{1/2}} \sigma
_{\rroot}(T_*)
I_{T_*-1}(\sigma_\rroot^{T_*-1}
) \bigl(1+o(1)\bigr) .
\end{equation}
Further, for all $u^{T_*} \in\{\pm1\}^{T_*+1}$, we have
%
%
\begin{equation}\label{eq:sumqminusptstar}
\sum_{\sigma_\rroot^{T_*}}\sigma_{\rroot}(T_*)\{
\qprob(\sigma_\rroot^{T_*}\midd u^{T_*}) -
\prob(\sigma_\rroot^{T_*}\midd u^{T_*})\}
=\frac{2\bias_{T_*}}{k^{1/2}}\bigl(1+o(1)\bigr) .
\end{equation}
\end{lemma}

We now show that, for the dynamics under external field,
the process of the root spin $\{\sigma_{\rroot}(t)\}_{t\ge0}$
converges as in Theorem \ref{thm:biased_convergence}.
\begin{lemma}\label{lemma:rooted_biased_convergence}
For $T_*$ a nonnegative integer, $\bias_0>0$,
and $\{u(t)\}_{t\ge0}\in\{\pm1\}^{\naturals}$,
consider the majority process under external field $u$, on the
rooted tree
$\G_{\rroot}=(\V_{\rroot},\Ed_{\rroot})$, with
i.i.d. initialization with bias $\theta= \bias_0/k^{(T_*+1)/2}$.
Then for any $T\ge T_*+2$, we have
\[
( \sigma_\rroot(0), \sigma_\rroot(1), \ldots, \sigma_\rroot
(T) )
\stackrel{d}{\rightarrow} \bigl( \tau(0), \tau(1), \ldots, \tau
(T_*), \sigma(T_*+1), +1, +1, \ldots, +1 \bigr) ,
\]
where the random variable
$\sigma(T_*+1)$ dominates stochastically
$\tau(T_*+1)$, and $\prob\{\sigma(T_*+1)>\tau(T_*+1)\}$
is strictly positive.
Finally, there exists
$\const(\bias_0, T_*)>0$ such that, for any $T\ge T_*+2$,
\[
\E_{\theta}\{\sigma_{\rroot}(T)\} \ge1-e^{-\const(\bias_0,
T_*)k} .
\]
\end{lemma}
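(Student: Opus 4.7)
The proof splits naturally into three time regimes. For $T\le T_*$, Lemmas \ref{lemma:qminusp} and \ref{lemma:qminusptstar} give the uniform estimate $|\qprob((\sigma_\root)_0^T|| u_0^T) - \prob((\sigma_\root)_0^T|| u_0^T)| = O(k^{-1/2})$, while Lemma \ref{lemma:RCconvergence} shows $\prob(\,\cdot\,|| u_0^T)\Rightarrow\prob_{\cav}$ for any fixed $u_0^T\in\{\pm1\}^{T+1}$. Combining these, the marginals of $\qprob$ up to time $T_*$ converge in distribution to the cavity process, which handles the first portion of the claim.

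The most substantive step is $T=T_*+1$. The plan is to apply the exact cavity recursion (\ref{eq:Recursion}) to $\qprob$ one more time, repeating the local-CLT argument from the proof of Lemma \ref{lemma:RCconvergence}, but now with an extra $O(k^{-1/2})$ contribution to the per-spin mean. Specifically, by Eq.~(\ref{eq:sumqminusptstar}) applied at each child $i$, conditional on $(\sigma_\root)_0^{T_*}$ one has
\[
\E[\sigma_i(T_*)\mid(\sigma_\root)_0^{T_*}] = \frac{1}{\sqrt{k}}\sum_{s<T_*}R(T_*,s)\sigma_\root(s) + \frac{2\omega_{T_*}}{\sqrt{k}}(1+o(1)).
\]
Summing the $k-1$ (conditionally) i.i.d.\ child trajectories and normalizing by $\sqrt{k-1}$, the vector $\frac{1}{\sqrt{k-1}}\sum_{i=1}^{k-1}\sigma_i(T_*)$ converges in distribution to $\eta(T_*)+\sum_{s<T_*}R(T_*,s)\sigma_\cav(s)+2\omega_{T_*}$, where $\eta(T_*)$ is Gaussian with the cavity covariance. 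Hence
\[
\sigma_\root(T_*+1)\stackrel{d}{\to}\sigma(T_*+1)\equiv\sign\Bigl(\eta(T_*)+\sum_{s<T_*}R(T_*,s)\sigma_\cav(s)+2\omega_{T_*}\Bigr).
\]
Since $\omega_{T_*}>0$ by iterated application of $R(t+1,t)>0$ (cf.~Eq.~(\ref{eq:R_t_tplus1})), coupling through the common $\eta(T_*)$ yields the stochastic dominance $\sigma(T_*+1)\succeq\sigma_\cav(T_*+1)$, with strict inequality on the positive-probability event that the Gaussian argument lies in $(-2\omega_{T_*},0)$.

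For $T\ge T_*+2$ the plan is to invoke exponential concentration. By the same argument applied at each child of the root (whose external field is $\sigma_\root$), one gets $\prob(\sigma_i(T_*+1)=+1\mid(\sigma_\root)_0^{T_*})\to\frac{1}{2}+\delta(\omega_0)$ for some $\delta(\omega_0)>0$, uniformly over a high-$\qprob$-probability set of root trajectories. Conditional on $(\sigma_\root)_0^{T_*}$ the $k-1$ child trajectories are i.i.d., so Hoeffding's inequality gives
\[
\qprob\Bigl(\sum_{i=1}^{k-1}\sigma_i(T_*+1)+u(T_*+1)\le 0\;\Big|\;(\sigma_\root)_0^{T_*}\Bigr)\le e^{-c\,\delta(\omega_0)^2\,k},
\]
whence $\E\sigma_\root(T_*+2)\ge 1-e^{-A(\omega_0)k}$. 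Iterating this concentration step (with even larger bias at later times) extends the bound to every $T\ge T_*+2$ and delivers $\sigma_\root(T)\Rightarrow +1$ in distribution.

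The main obstacle is the $T=T_*+1$ step, where the $O(k^{-1/2})$ per-spin bias accumulated by time $T_*$ must be shown to survive aggregation: it gets multiplied by $\sqrt{k-1}$ when summed over the children, producing a finite $O(1)$ Gaussian shift in the limit. This mirrors the bookkeeping in Lemmas \ref{lemma:qminusp}--\ref{lemma:qminusptstar}, but requires reverifying the hypotheses of Theorem \ref{thm:sum_approx} uniformly in $(\sigma_\root)_0^{T_*}$ and carefully handling the tie contributions $|\sum_i\sigma_i(t)+u(t)|\le r_0$ at times $t<T_*$, which must again be shown to be negligible in the new regime.
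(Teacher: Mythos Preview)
Your proposal is correct and follows essentially the same approach as the paper: combine Lemmas \ref{lemma:qminusp}--\ref{lemma:qminusptstar} with Lemma \ref{lemma:RCconvergence} to get cavity convergence up to $T_*$, rerun the CLT argument of Lemma \ref{lemma:RCconvergence} with the extra $2\omega_{T_*}/\sqrt{k}$ per-child mean shift to obtain the modified cavity process (\ref{eq:modified_cavity_recursion}) at time $T_*+1$, and then feed the resulting strictly positive bias $\E\sigma(T_*+1)>0$ back through the recursion (\ref{eq:Recursion}) with a concentration bound (the paper uses Azuma where you use Hoeffding) to get the exponential estimate for $T\ge T_*+2$. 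One small sharpening: the convergence $\qprob(\sigma_i(T_*+1)=+1\,||\,(\sigma_\root)_0^{T_*+1})\to \tfrac{1}{2}+\delta(\omega_0)$ in fact holds uniformly over \emph{all} root trajectories (since the lemma is being proved for arbitrary $u$), so the ``high-probability set'' restriction is unnecessary.
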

\begin{pf}
An immediate consequence of (\ref{eq:sumqminusptstar}) and
(\ref{eq:sumqminusp}) is that,
for all $T$, $0\le T\le T_*$
%
%
\begin{equation}\label{eq:xqminusxp}
\Ex_{\qprob(\cdot\midd u^{T})} [\sigma_\rroot(T)] - \Ex
_{\prob(\cdot\midd u^{T})} [\sigma_\rroot(T)]=\frac{2\bias
_{T}}{k^{(T_*-T+1)/2}}\bigl(1+o(1)\bigr) .
\end{equation}
Further Lemmas \ref{lemma:RCconvergence} and \ref{lemma:qminusp}
imply that
%
%
\begin{eqnarray}
|\E_{\qprob}\{\sigma_{\rroot}(t)\sigma_{\rroot}(s)\}
-C(t,s)| &=& o(1) ,\nonumber\\[-8pt]\\[-8pt]
\Biggl|\E_{\qprob}
\sigma_{\rroot}(t)-\frac{1}{\sqrt{k}}\sum_{s=0}^{t-1}R(t,s)
u(s)\Biggr| &=&
o(k^{-1/2})\nonumber
\end{eqnarray}
for $t,s\le T_*-1$.
At $T_*$, using Lemma \ref{lemma:qminusptstar} and
(\ref{eq:xqminusxp}) with $T=T_*$ we obtain
%
%
\begin{eqnarray}\label{eq:CovarianceMeanqtstar}
|\E_{\qprob}\{\sigma_\rroot(T_*)\sigma_\rroot(s)\}
-C(t,s)| &=& o(1),\nonumber\\[-8pt]\\[-8pt]
\Biggl|\E_\qprob\Biggl[\sigma_\rroot(T_*+1)-\frac{1}{\sqrt{k}}\Biggl\{
\sum_{s=0}^{T_*-1}R(t,s) u(s) + 2\bias_{T_*} \Biggr\}\Biggr]\Biggr| &=&
o(k^{-1/2}) ,\nonumber
\end{eqnarray}
which holds for all $s\le T_*$.

Now, repeating the CLT-based argument as in the proof of Lemma \ref
{lemma:RCconvergence}, we can show that with a biased initialization,
$(\sigma_\rroot(0), \sigma_\rroot(1), \ldots, \sigma_\rroot
(T_*+1))$ converges to a modified cavity process, where the governing
equation at $T_*$ is
%
%
\begin{equation}\label{eq:modified_cavity_recursion}
\sigma(T_*+1)=\sign\Biggl(\eta(T_*) +
\sum_{s=0}^{T_*-1}{R(t,s)\tau(s)} + 2\bias_{T_*} \Biggr) .
\end{equation}
Convergence to this process occurs for all $u^{T_*+1}$. Clearly,
since $\bias_{T_*}>0$, this process dominates the unmodified cavity
process. Further, we have $B(\bias_0)=\E[\sigma'(T_*+1)] > 0$. We
know $\lim_{k \rightarrow\infty}
\E[\sigma_\rroot(T_*+1)]=\E[\sigma(T_*+1)]$, and therefore there
exists $k_0$, such that for all
$k>k_0$, $\E[\sigma_\rroot(T_*+1)]>B(\bias_0, T_*)/2$. Plugging
this back into the recursion
equation (\ref{eq:Recursion}) applied to $\qprob$, and using Azuma's
inequality, we see that at $T=T_*+2$
\[
\E_{\theta}\{\sigma_\rroot(T)\} \ge1-e^{-B^2k/8}\qquad \forall
k>k_0 .
\]
Clearly, the same continues to hold for $T>T_*+2$, for sufficiently
large $k$.
\end{pf}

Finally, we can prove Theorem \ref{thm:biased_convergence}.
\begin{pf*}{Proof of Theorem \ref{thm:biased_convergence}}
As in the proof of Theorem \ref{thm:unbiased_convergence}, we
consider the dynamics on the rooted tree $\G_{\rroot}$
under external fields $u_-=(-1,-1, \ldots)$ and $u_+=(+1,+1, \ldots)$,
and we denote by $\sigma_{\rroot,-}^T$, $\sigma_{\rroot,+}^T$
be the corresponding trajectories.
By monotonicity of the dynamics, the process $\sigma_i^T$ at any vertex
of the regular tree $\G$ is dominated by $\sigma_{\rroot,+}^T$
and dominates $\sigma_{\rroot,-}^T$. Since by Lemma
\ref{lemma:rooted_biased_convergence} both $\sigma_{\rroot,+}^T$
and $\sigma_{\rroot,-}^T$ converge to the same limit, the same holds for
$\sigma_i^T$ as well.
\end{pf*}
%
%
\section{\texorpdfstring{Lower bound: Proof of Theorem
\protect\ref{thm:LowerBound}}{Lower bound: Proof of Theorem 2.9}}
\label{sec:LowerBound}

In this section, we prove Theorem~\ref{thm:LowerBound}, that provides
a sequence of lower bounds on the consensus threshold
$\theta_*(k)$.

Our lower bounds are based on the formation of ``stable'' structures of
$-1$ spins, that is, once
such a structure is formed, it continues to exist at all future times,
hence preventing
consensus from being reached.

Consider $k=3$. Clearly, if there is an infinite path of $-1$ spins,
spins along the path
remain unchanged for all future times. In fact, it is sufficient to
have an infinite path
having alternate vertices with $-1$ spins, due to the ``bipartite''
nature of the dynamics.
To see this, label an arbitrary node on the path $0$. Choose
an arbitrary direction on the path and hence label nodes $ \ldots,
-2, -1, 0 , 1, 2, \ldots.$ Suppose
that at time $t_0$, nodes with even labels $ \ldots, -2, 0, 2,
\ldots$ all have spin $-1$.
At time $t_0+1$, all nodes with odd labels will have spin $-1$. At time
$t_0+2$, all nodes with
even labels will again have spin $-1$ and so on. 
Note that any value for $t_0$ suffices. $t_0=0$ corresponds to an
alternating core existing initially, but
it is sufficient for such structure to be formed, say, at $t_0 =3$.

This idea can be generalized to any $k$. A $\lceil\frac{k+1}{2}
\rceil$-core of $-1$ spins
is clearly stable. In fact, an alternating $\lceil\frac{k+1}{2}
\rceil$-core of $-1$ spins (having
alternate ``levels'' of $-1$ spins) is stable. We formally define such
structures below. The key point to note
is that though such structures exist in abundance at $T=0$ with small
positive $\theta$ for $k=3$, this is not the case
for larger values of $k$. We do not obtain a positive lower bound for
$k>3$ based on analysis of the
initial configuration only. Thus, we need a means to show that such
structures form in abundance at $t_0(k) > 0$
for positive bias. We develop a set of iterative equations (see Theorem
\ref{thm:LowerBound})
whose fixed point corresponds (roughly) to the probability of
formation of an alternating core at time $t_0$ at an arbitrary node. A
nontrivial fixed point implies that alternating cores
are formed in abundance. Such iterative equations are in the spirit of
the exact cavity recursion (Lemma \ref{lemma:ExactCavity})
though the analysis here is more intricate.

\subsection{Notation and preliminaries}
Let $\HH= (\V_{\HH}, \Ed_{\HH})$ be an induced subgraph of $\G$
with vertex set $\V_{\HH}$ and edge set $\Ed_{\HH}$. We denote by
$\partial_{\HH} i$ the set of neighbors in $\HH$ of a node $i \in
\HH$.
Since $\HH$ is an induced subgraph of $\G$, we have $\V_\HH
\subseteq\V$
and, for all $i\in\V_{\HH}$, $\partial_{\HH}i = \{j\dvtx j
\in\partial i,j \in\V_\HH\}$. Given the graph $\G$,
$\V_\HH$ uniquely determines the induced subgraph $\HH$.
\begin{definition}
The subgraph $\HH$ is an $r$-\textit{core} of $\G$
with respect to spins $\sigma\dvtx\V\rightarrow\{-1,+1\}$ if $\HH$
is an induced subgraph of $\G$ such that
$|\partial_{\HH} i| \geq r$ and $\sigma_i=-1$ for all $i \in\V_\HH$.
\end{definition}

Clearly, this definition is useful only for $r \leq k$. Now, it is
easy to see that if $\HH$ is an $\lceil\frac{k+1}{2}
\rceil$-core with respect to $\us(T)$, then it is also an $
\lceil\frac{k+1}{2} \rceil$-core with respect to $\us(T')$
for all $T' > T$, by definition of majority dynamics. In fact, a
less stringent requirement suffices for persistence of negative
spins.
\begin{definition}
\label{def:alt_r_core}
$\HH$ is an \textit{alternating $r$-core}
of a graph $\G$ with respect to spins $\sigma\dvtx\V\rightarrow\{
-1,+1\}$, if $\HH$ is an induced subgraph of $\G$ such that:
\begin{enumerate}
\item
$|\partial_{\HH} i| \geq r \ \forall i \in\V_H$,
\item
there is a partition $(\V_{-,\HH},\V_{*,\HH})$ of $\V_\HH$ such
that:
\begin{enumerate}[(a)]
\item[(a)] $\sigma_i = -1$ for all $i \in\V_{-,\HH}$,
\item[(b)] $\partial_\HH i \subseteq\V_{-,\HH}$ for all $i \in\V
_{*,\HH}$ and
$\partial_\HH i \subseteq\V_{*,\HH}$ for all $i \in\V_{-,\HH}$,
that is, $\HH$ is bipartite with respect to the vertex partition
$(\V_{-,\HH},\V_{*,\HH})$. We call $\V_{-,\HH}$ the \textit
{even} vertices
and $\V_{*,\HH}$ the \textit{odd} vertices.
\end{enumerate}
\end{enumerate}
\end{definition}
\begin{lemma}
\label{lemma:alt_core_persistence} If $\HH$ is an alternating $
\lceil\frac{k+1}{2} \rceil$-core with respect to $\us(T)$,
then it is also an alternating $\lceil\frac{k+1}{2}
\rceil$-core with respect to $\us(T')$ for all $T' > T$.
\end{lemma}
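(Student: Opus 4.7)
My plan is to prove Lemma \ref{lemma:alt_core_persistence} by induction on $T' \geq T$, with the key structural observation that the bipartition $(\V_{-,\H}, \V_{*,\H})$ effectively swaps roles at each time step. That is, I will show the stronger statement: if $\H$ is an alternating $\lceil (k+1)/2 \rceil$-core with respect to $\us(T')$ via the bipartition $(\V_{-,\H}, \V_{*,\H})$, then $\H$ is an alternating $\lceil (k+1)/2 \rceil$-core with respect to $\us(T'+1)$ via the swapped bipartition $(\V_{*,\H}, \V_{-,\H})$ (with the former odd side now playing the role of the even/all-negative side). The base case $T' = T$ holds by hypothesis, so everything reduces to a single-step inductive argument.

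For the inductive step, note that the graph-theoretic requirements in Definition \ref{def:alt_r_core} --- namely, condition (1) that $|\partial_\H i| \geq \lceil (k+1)/2 \rceil$ and condition 2(b) that $\H$ is bipartite with respect to the chosen partition --- depend only on $\H$ itself and are manifestly symmetric under swapping the two sides, so they transfer for free from time $T'$ to time $T'+1$. The only nontrivial content is condition 2(a): I need to verify that every vertex $i \in \V_{*,\H}$ satisfies $\sigma_i(T'+1) = -1$, so that $\V_{*,\H}$ can legitimately serve as the new ``even'' side at time $T'+1$.

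This is where I will use the majority update rule together with the minimum-degree bound. Fix $i \in \V_{*,\H}$. By the bipartiteness condition applied at time $T'$, $\partial_\H i \subseteq \V_{-,\H}$, and by condition 2(a) at time $T'$, every $j \in \partial_\H i$ has $\sigma_j(T') = -1$. Letting $r = |\partial_\H i| \geq \lceil (k+1)/2 \rceil$, the sum that drives the update satisfies
\begin{equation*}
\sum_{j \in \partial i} \sigma_j(T') \;\leq\; -r + (k - r) \;=\; k - 2r \;\leq\; k - 2\lceil (k+1)/2 \rceil \;\leq\; -1,
\end{equation*}
so the sum is strictly negative. In particular, no tie-breaking randomization of the form (\ref{eq:RandomUpdate}) is triggered, and the deterministic rule (\ref{eq:MajorityRule}) forces $\sigma_i(T'+1) = \text{sign}(\text{something} \leq -1) = -1$. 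This establishes condition 2(a) for the swapped bipartition at time $T'+1$ and completes the induction.

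I do not anticipate any real obstacle here: the lemma is essentially combinatorial, and the alternating structure is precisely engineered so that each side's ``freedom'' at one time step is converted into a forced $-1$ at the next step. The only subtle point worth flagging is ensuring the inequality $k - 2\lceil (k+1)/2 \rceil \leq -1$ holds for both parities of $k$ (giving strictly negative sums, hence no appeal to the random update is needed), which I verified above separately for $k=2m$ and $k=2m+1$.
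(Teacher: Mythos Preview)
Your proof is correct and follows essentially the same approach as the paper: induction on $T'$ with the bipartition swapping roles at each step, using the minimum-degree condition to force the new ``even'' side to be all $-1$. You spell out the inequality $k - 2\lceil (k+1)/2\rceil \le -1$ more explicitly than the paper does, but the argument is the same.
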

\begin{pf}
We prove the lemma by induction over $T'$. For this proof only, let
\[
\Ev_{T'} \equiv\mbox{``$\HH$ is an alternating }\biggl\lceil
\frac{k+1}{2}\biggr\rceil\mbox{-core with respect to $\us(T')$.''}
\]
Clearly, $\Ev_T$ holds. Suppose $\Ev_{T'}$ holds. Let
$(\V_{-,\HH},\V_{*,H})= (\V_1,\V_2)$ be a partition of $\HH$ as in
the Definition \ref{def:alt_r_core}.
In particular $\sigma_i(T')=-1 $ for all $i \in V_1$. By the
definition of majority dynamics, we know that
$\sigma_i(T'+1)=-1$ for all $i \in\V_2$.
As a consequence $\HH$ is an alternating $\lfloor(k+1)/2\rfloor$-core
with respect to $\sigma(T'+1)$ with partition
$(\V_{-,\HH},\V_{*,H})=(\V_2,\V_1)$, and therefore $\Ev_{T'+1}$ holds.
\end{pf}

We now proceed in a manner similar to Section \ref{sec:ExactCavity}.
We consider the rooted tree $\G_{\rroot}=(\V_{\rroot},\Ed_{\rroot
})$, with
a root vertex $\rroot$ having $k-1$ ``children.'' The root spin
$\sigma_{\rroot}$ evolves under as external field $\{u(t)\}_{t\ge0}$
as in (\ref{eq:ModifiedUpdateRoot})
and we denote by $\prob(\sigma_\rroot^T\midd u^T)$
its distribution.
We use $\tilde{\partial} i$ to denote the ``children'' of node
$i \in\G_\rroot$. In this section we will assume $u^T \in\{-1,+1\}^{T+1}$.
\begin{definition}
\label{def:rooted_alt_r_core} $\HH$ is a \textit{rooted alternating
$r$-core} of $\G_\rroot$ with respect to spins $\sigma\dvtx\V
_\rroot
\rightarrow\{-1,+1\}$, if $\HH$ is a connected induced subgraph of
$\G_\rroot$ such that:
\begin{enumerate}
\item$\rroot\in\V_\HH$.
\item
$|\tilde{\partial}_{\HH} i| \geq r-1$ for all $i \in\V_\HH$,
\item
there is a partition $(\V_{-,\HH},\V_{*,\HH})$ of $\V_\HH$ such
that:
\begin{enumerate}[(a)]
\item[(a)] $\sigma_i = -1$ for all $i \in\V_{-,\HH}$,
\item[(b)] $\partial_\HH i \subseteq\V_{-,\HH}$ for all
$i \in\V_{*,\HH}$ and $\partial_\HH i \subseteq\V_{*,\HH}$ for
all $i
\in\V_{-,\HH}$, that is, $\HH$ is bipartite with respect to the vertex
partition $(\V_{-,\HH},\V_{*,\HH})$. We call $\V_{-,\HH}$ the
\textit{even} vertices and $\V_{*,\HH}$ the \textit{odd} vertices.
\end{enumerate}
\end{enumerate}
\end{definition}

Let $\G_{\rroot}^d=(\V_{\rroot}^d,\Ed_{\rroot}^d)$, be the induced
subgraph of $\G_\rroot$ containing all vertices that are at a depth
less than or equal to $d$ from $\rroot$, the depth of $\rroot$ itself
being 0. For example, $\G_\rroot^0$ contains $\rroot$ alone. Denote by
$\tilde{\partial} \G_{\rroot}^d$, the set of leaves of
$\G_{\rroot}^d$. For example, $\tilde{\partial}
\G_{\rroot}^0=\{\rroot\}$.
\begin{definition}
\label{def:rooted_partial_alt_r_core}
$\HH$ is a \textit{depth-$d$ rooted alternating $r$-core}
of $\G_\rroot$ with respect to spins $\sigma\dvtx\V_\rroot^d
\rightarrow\{-1,+1\}$, if $\HH$ is an connected induced subgraph of
$\G_\rroot^d$ such that:
\begin{enumerate}
\item$\rroot\in\V_\HH$,
\item
$|\tilde{\partial}_{\HH} i| \geq r-1$ for all $i \in\V_\HH
\setminus\tilde{\partial} \G_{\rroot}^d$,
\item
there is a partition $(\V_{-,\HH},\V_{*,\HH})$ of $\V_\HH$ such
that:
\begin{enumerate}[(a)]
\item[(a)] $\sigma_i = -1$ for all $i \in\V_{-,\HH}$,
\item[(b)] $\partial_\HH i \subseteq\V_{-,\HH}$ for all
$i \in\V_{*,\HH}$ and $\partial_\HH i \subseteq\V_{*,\HH}$ for
all $i
\in\V_{-,\HH}$, that is, $\HH$ is bipartite with respect to the vertex
partition $(\V_{-,\HH},\V_{*,\HH})$. We call $\V_{-,\HH}$ the
\textit{even} vertices and $\V_{*,\HH}$ the \textit{odd} vertices.
\end{enumerate}
\end{enumerate}
\end{definition}

We define $\HH_{\rroot,{\mathrm{even}}}(T)$ to be the maximal rooted
alternating $\lceil\frac{k+1}{2}\rceil$-core
of $\G_\rroot$ with respect to $\us(T)$, such that $\rroot$ is an
even vertex. For all $d\ge0$,
we define $\HH_{\rroot,{\mathrm{even}}}^d(T)$ to be the maximal
depth-$d$ rooted alternating
$\lceil\frac{k+1}{2}\rceil$-core of $\G_\rroot$ with
respect to $\us^d(T)$, such that $\rroot$ is even.
Here $\us^d(T)$ is the restriction of $\us(T)$ to $\V_\rroot^d$.
We similarly define $\HH_{\rroot,{\mathrm{odd}}}(T)$ and $\HH
_{\rroot
,{\mathrm{odd}}}^d(T)$.

We define $\EvC_{\mathrm{even}}(T)= \{\rroot\in
\V_{\HH_{\rroot,{\mathrm{even}}}(T)} \}$, that is, $\EvC_
{\mathrm{even}}(T)$ is
the event of $\HH_{\rroot,{\mathrm{even}}}(T)$ being nonempty. Define
$\EvC_{\mathrm{even}}^d(T)= \{\rroot\in
\V_{\HH_{\rroot,{\mathrm{even}}}(T)}^d \}$. We similarly define
$\EvC_{\mathrm{odd}}(T)$ and $\EvC_{\mathrm{odd}}^d(T)$. It is easy
to see
that $\EvC_{\mathrm{even}}^d(T) \subseteq\EvC_{\mathrm{even}}^{d'}(T),
\ \forall d' < d$. Also, $\EvC_{\mathrm{even}}(T) = \bigcap_{d\ge0}
\EvC_{\mathrm{even}}^d(T)$. Similarly, $\EvC_{\mathrm{odd}}^d(T)
\subseteq
\EvC_{\mathrm{odd}}^{d'}(T), \ \forall d' < d$ and $\EvC_
{\mathrm{odd}}(T) =
\bigcap_{d\ge0} \EvC_{\mathrm{odd}}^d(T)$. We thus have the following
remark.
\begin{lemma}\label{lemma:alt_children_seq}
$\EvC_{{\mathrm{even}}}^d(T), d\ge0$, form a monotonic nonincreasing
sequence of events in $d$ with limit $\bigcap_{d\ge0}
\EvC_{{\mathrm{even}}}^d(T) = \EvC_{\mathrm{even}}(T)$, for all
$T\ge0$,
similarly for the \textup{``}odd\textup{''} quantities.
\end{lemma}

Let $\Ev(\traj^T) \equiv\{\sigma_\rroot(t)=\traj(t), 0 \le t\le T
\}$
and define the events
\begin{eqnarray*}
\EvA_{\mathrm{even}}(T, \traj^T ) &=&\EvC_{\mathrm{even}}(T) \cap
\Ev
(\traj^T) ,\\
\EvA_{\mathrm{even}}^d(T, \traj^T)&=&\EvC_{\mathrm{even}}^d(T) \cap
\Ev
(\traj^T),\qquad d\ge0 .
\end{eqnarray*}
We similarly define $\EvA_{\mathrm{odd}}, \EvA_{\mathrm{odd}}^d$.

We now proceed to define
$\Psi_{{\mathrm{even}},T}^d(\sigma_\rroot^T\midd u^T)$ and
$\Psi_{{\mathrm{odd}},T}^d(\sigma_\rroot^T\midd u^T)$ as
probabilities. Immediately after the new definitions, we
show that they are consistent with the recursive definitions in
Theorem \ref{thm:LowerBound}.
\begin{definition}\label{def:psi_probability_defn}
\begin{eqnarray*}
\Psi_{{\mathrm{even}},T}(\sigma_\rroot^T\midd u^T) &\equiv&
\prob(\EvA_{\mathrm{even}}(T, \sigma_\rroot^T )\midd u^T) ,\\
\Psi_{{\mathrm{even}},T}^d(\sigma_\rroot^T\midd u^T)&\equiv&
\prob(\EvA_{\mathrm{even}}^d(T, \sigma_\rroot^T)\midd u^T)
,\qquad d\ge0 .
\end{eqnarray*}
We similarly define $\Psi_{{\mathrm{odd}},T}(\sigma_\rroot
^T\midd u^T), \Psi_{{\mathrm{odd}},T}^d(\sigma_\rroot
^T\midd u^T)$.
\end{definition}

It follows from Lemma \ref{lemma:alt_children_seq} that $\EvA_
{\mathrm{even}}(T)=\bigcap_{d\ge0}\EvA_{\mathrm{even}}^d(T)$.
Therefore\break
$ \Psi_{{\mathrm{even}},T}^d(\sigma_\rroot^T\midd u^T)$ is
nonincreasing in
$d$ and by the monotone convergence theorem
%
%
\begin{equation}
\Psi_{{\mathrm{even}},T}(\sigma_\rroot^T\midd u^T) = \lim
_{d\rightarrow\infty} \Psi_{{\mathrm{even}},T}^d(\sigma_\rroot
^T\midd u^T) .
\end{equation}
Similarly, we have
%
%
\begin{equation}\label{eq:psi_odd_limit_proved}
\Psi_{{\mathrm{odd}},T}(\sigma_\rroot^T\midd u^T)=
\lim_{d\rightarrow
\infty}\Psi_{{\mathrm{odd}},T}^d(\sigma_\rroot^T\midd u^T) .
\end{equation}
This is
consistent with the definition of
$\Psi_{{\mathrm{odd}},T}(\sigma_\rroot^T\midd u^T)$ in Theorem
\ref{thm:LowerBound}.

The values for $d=0$ follow from Definition
\ref{def:psi_probability_defn},
%
%
\begin{eqnarray}\label{eq:initial_psi_redef}
\Psi_{{\mathrm{odd}},T}^0(\sigma_\rroot^T\midd u^T) &=& \prob
(\sigma
_\rroot^T\midd u^T) , \nonumber\\[-8pt]\\[-8pt]
\Psi_{{\mathrm{even}},T}^0(\sigma_\rroot^T\midd u^T) &=&
\prob(\sigma_\rroot^T\midd u^T) \ind\bigl(\sigma_\rroot
(T)=-1\bigr) .\nonumber
\end{eqnarray}
Note consistency with (\ref{eq:initial_psi}).

Next, in Lemma \ref{lemma:PsiRecursion}, we show that
$\Psi_{{\mathrm{even}},T}^d(\sigma_\rroot^T\midd u^T)$ and
$\Psi_{{\mathrm{odd}},T}^d(\sigma_\rroot^T\midd u^T)$---as per
Definition \ref{def:psi_probability_defn}---satisfy
(\ref{eq:Alt_fp1}), (\ref{eq:Alt_fp2}) [repeated as
(\ref{eq:Alt_fp1_redef}), (\ref{eq:Alt_fp2_redef}) below].

\subsection{\texorpdfstring{Proof of Theorem
\protect\ref{thm:LowerBound}}{Proof of Theorem 2.9}}
\begin{lemma}\label{lemma:PsiRecursion}
The following iterative equations are satisfied for all $d\ge0$:
%
%
\begin{eqnarray}\label{eq:Alt_fp1_redef}
&&\Psi_{{\mathrm{odd}},T}^{d+1}(\sigma_\rroot^T\midd u^T)\nonumber\\
&&\qquad=
\prob_0(\sigma_\rroot(0))
\sum_{r=\lceil({k+1})/{2} \rceil-1}^{k-1}\pmatrix
{k-1\cr r} \nonumber\\
&&\qquad\quad{}\times\sum_{\sigma_1^{T}\cdots\sigma_{k-1}^{T}}
\prod_{t=0}^{T-1}
\K_{u(t)}\bigl(\sigma_{\rroot}(t+1)|\sigma_{\partial\rroot
}(t)\bigr)\\
&&\qquad\quad\hphantom{{}\times\sum_{\sigma_1^{T}\cdots\sigma_{k-1}^{T}}}
{}\times\prod_{i=1}^{r}\Psi_{{\mathrm{even}},T}^d
(\sigma_i^{T}\midd \sigma_\rroot^{T})\nonumber\\
&&\qquad\quad\hphantom{{}\times\sum_{\sigma_1^{T}\cdots\sigma_{k-1}^{T}}}
{}\times\prod_{i=r+1}^{k-1}\bigl(\prob(\sigma_i^{T}\midd \sigma_\rroot
^{T})- \Psi_{{\mathrm{even}},T}^d(\sigma_i^{T}\midd \sigma
_\rroot^{T})\bigr) ,\nonumber
\\
\label{eq:Alt_fp2_redef}
&&\Psi_{{\mathrm{even}},T}^{d+1}(\sigma_\rroot^T\midd u^T) \nonumber\\
&&\qquad=
\ind\bigl(\sigma_\rroot(T)=-1\bigr) \prob_0(\sigma_\rroot(0))
\sum_{r=\lceil({k+1})/{2} \rceil-1}^{k-1}\pmatrix
{k-1\cr r} \nonumber\\
&&\qquad\quad{}\times\sum_{\sigma_1^{T}\cdots\sigma_{k-1}^{T}}
\prod_{t=0}^{T-1}
\K_{u(t)}\bigl(\sigma_{\rroot}(t+1)|\sigma_{\partial\rroot
}(t)\bigr) \\
&&\qquad\quad\hphantom{{}\times\sum_{\sigma_1^{T}\cdots\sigma_{k-1}^{T}}}
{}\times \prod_{i=1}^{r}\Psi_{{\mathrm{odd}},T}^d
(\sigma_i^{T}\midd \sigma_\rroot^{T})
\nonumber\\
&&\qquad\quad\hphantom{{}\times\sum_{\sigma_1^{T}\cdots\sigma_{k-1}^{T}}}
{}\times \prod_{i=r+1}^{k-1}\bigl(\prob(\sigma_i^{T}\midd \sigma_\rroot
^{T})- \Psi_{{\mathrm{odd}},T}^d(\sigma_i^{T}\midd \sigma
_\rroot^{T})\bigr) ,\nonumber
\end{eqnarray}
\begin{eqnarray*}
\\[-12pt]
\K_{u(t)}(\cdots)
&\equiv&\cases{
\displaystyle \ind\Biggl\{\sigma_\rroot(t+1) = \sign\Biggl(\sum_{i=1}^{k-1}\sigma_i(t)+
u(t)\Biggr)\Biggr\},\cr
\qquad\hspace*{12.7pt}\mbox{if $\displaystyle \sum_{i=1}^{k-1}\sigma_i(t)+
u(t)\neq0 $},\cr
\dfrac{1}{2}, \qquad \mbox{otherwise}.}
\end{eqnarray*}
\end{lemma}

Lemma \ref{lemma:PsiRecursion} contains a type of ``cavity recursion''
similar to Lemma \ref{lemma:ExactCavity}. The main difference is that
here we iterate over depth $d$ instead of time $T$. The proof is
similar to that of Lemma \ref{lemma:ExactCavity} and can be found in
Appendix \ref{app:ExactCavityRec}.

Let the vector of values taken by $\Psi_{{\mathrm{odd}},T}( \cdot
\midd
\cdot)$ be denoted by $\bar{\Psi}_{{\mathrm{odd}},T}$. Define
$\bar{\Psi}_{{\mathrm{even}},T}$ similarly. Define $\bar{\Psi
}_T=( \bar{\Psi}_{{\mathrm{odd}},T}, \bar{\Psi}_{
{\mathrm{even}},T} )$.\vspace*{1pt}

As before, $\prob_0(-1)=\frac{1-\theta}{2}$ and $\prob_0(+1)=\frac
{1+\theta}{2}$. Define $\theta_{\mathrm{lb}}(k,T)=\sup\{
\theta\dvtx\break\bar{\Psi}_{{\mathrm{odd}},T} \succ0 \}$, where $\bar{v}
\succ0$, denotes that every component of
the vector $\bar{v}$ is strictly positive.

Finally, we relate quantities on the process on the rooted graph $\G
_\rroot$ to the process on the infinite $k$-ary tree $\G$. Pick an
arbitrary node $v \in\V$. Let $\G^d=(\V^d,\Ed^d)$, be the induced
subgraph of $\G$ containing all vertices that are at a distance less
than or equal to $d$ from $v$. For example, $\G^0$ contains $v$ alone.
Denote by $\tilde{\partial} \G^d$, the set of leaves of $\G^d$. For
example, $\tilde{\partial} \G^0=\{v\}$.
\begin{definition}
\label{def:partial_r_core}
$\HH$ is a \textit{depth-$d$ alternating $r$-core}
of $\G$ with respect to spins $\sigma\dvtx\V^d \rightarrow\{-1,+1\}$,
if $\HH$ is an connected induced subgraph of $\G^d$ such that:
\begin{enumerate}
\item$v \in\V_\HH$,
\item
$|\tilde{\partial}_{\HH} i| \geq r-1$ for all
$i \in\V_\HH\setminus\tilde{\partial} \G^d$,
\item
there is a partition $(\V_{-,\HH},\V_{*,\HH})$ of $\V_\HH$ such
that:
\begin{enumerate}[(a)]
\item[(a)] $\sigma_i = -1$ for all $i \in\V_{-,\HH}$,
\item[(b)] $\partial_\HH i \subseteq\V_{-,\HH}$ for all
$i \in\V_{*,\HH}$ and $\partial_\HH i \subseteq\V_{*,\HH}$
for all $i \in\V_{-,\HH}$, that is, $\HH$ is bipartite with respect
to the vertex partition $(\V_{-,\HH},\V_{*,\HH})$. We call $\V
_{-,\HH}$ the \textit{even} vertices and $\V_{*,\HH}$ the \textit
{odd} vertices.
\end{enumerate}
\end{enumerate}
\end{definition}

We define $\widehat{\mathcal{H}}_{{\mathrm{even}}}(T)$, as the maximal
alternating $\lceil\frac{k+1}{2}\rceil$-core
of $\G$ with respect to $\us(T)$, such that $v$ is an even vertex.
For all $d\ge0$,
we define $\widehat{\mathcal{H}}_{{\mathrm{even}}}^d(T)$, as the
maximal depth-$d$ alternating
$\lceil\frac{k+1}{2}\rceil$-core of $\G$ with respect
to $\us(T)$ restricted to~$\V^d$, such that $v$ is even.
We similarly define $\widehat{\mathcal{H}}_{{\mathrm{odd}}}(T)$ and
$\widehat{\mathcal{H}}_{{\mathrm{odd}}}^d(T)$.\vspace*{1pt}

We now\vspace*{1pt} proceed to define $\hEvC_{\mathrm{even}}(T)$, $\hEvC_
{\mathrm{even}}^d(T)$,
$\hEvC_{\mathrm{odd}}(T)$, $\hEvC_{\mathrm{odd}}^d(T)$, $\hEv
(\traj
^T)$ and
$ \hEvA_{\mathrm{even}}(T,\traj^T)$, $\hEvA_{\mathrm
{even}}^d(T,\traj^T)$,
$\hEvA_{\mathrm{odd}}(T,\traj^T)$, $\hEvA_{\mathrm{odd}}^d(T,\traj
^T)$ for $\G$, analogously to the definitions of $C_{\mathrm{even}}(T)$
etc. for $\G_\rroot$.
An analog of Lemma \ref{lemma:alt_children_seq} holds.

Define the probabilities
\begin{eqnarray*}
\Psih_{{\mathrm{even}},T}(\sigma^T) &=& \prob(\hEvA
_{\mathrm{even}}(T, \sigma^T )) ,\\
\Psih_{{\mathrm{even}},T}^d(\sigma^T)&=& \prob(\hEvA
_{\mathrm{even}}^d(T, \sigma^T)) ,\qquad d\ge0 .
\end{eqnarray*}
As before, we have $ \Psih_{{\mathrm{even}},T}^d(\sigma^T)$ is
nonincreasing in
$d$ and
%
%
\begin{equation}
\Psih_{{\mathrm{even}},T}(\sigma^T) = \lim_{d\rightarrow\infty
}\Psih_{{\mathrm{even}},T}^d(\sigma^T) .
\end{equation}
We similarly define $\Psih_{{\mathrm{odd}},T}(\sigma^T), \Psih
_{{\mathrm{odd}},T}^d(\sigma^T)$ and have
$\Psih_{{\mathrm{odd}},T}^d(\sigma^T)$ converging to
$\Psih_{{\mathrm{odd}},T}(\sigma^T)$ as $d\to\infty$.
\begin{lemma}
\label{lemma:relate_rooted_complete}
The following identities are satisfied for all $d\ge0$:
%
%
\begin{eqnarray}\label{eq:Alt_rc_fp1}
\Psih_{{\mathrm{odd}},T}^{d+1}(\sigma^T)
&=& \prob_0(\sigma(0))
\sum_{r=\lceil({k+1})/{2} \rceil}^{k}\pmatrix{k\cr
r} \nonumber\\[-1pt]
&&{}\times\sum_{\sigma_1^{T}\cdots\sigma_{k}^{T}}
\prod_{t=0}^{T-1}
\tK\bigl(\sigma(t+1)|\sigma_{\partial v}(t)\bigr)
\prod_{i=1}^{r}\Psi_{{\mathrm{even}},T}^d
(\sigma_i^{T}\midd \sigma^{T})\\[-1pt]
&&\hphantom{{}\times\sum_{\sigma_1^{T}\cdots\sigma_{k}^{T}}}
{}\times\prod_{i=r+1}^{k}\{\prob(\sigma_i^{T}\midd \sigma^{T}
)- \Psi_{{\mathrm{even}},T}^d(\sigma_i^{T}\midd \sigma^{T}
)\} ,\nonumber
\\[-1pt]
\label{eq:Alt_rc_fp2}
\Psih_{{\mathrm{even}},T}^{d+1}(\sigma^T)
&=& \ind\bigl(\sigma(T)=-1\bigr)
\prob_0(\sigma(0))
\sum_{r=\lceil({k+1})/{2} \rceil}^{k}\pmatrix{k\cr
r} \nonumber\\[-1pt]
&&{}\times\sum_{\sigma_1^{T}\cdots\sigma_{k}^{T}}
\prod_{t=0}^{T-1}
\tK\bigl(\sigma(t+1)|\sigma_{\partial v}(t)\bigr)
\prod_{i=1}^{r}\Psi_{{\mathrm{odd}},T}^d
(\sigma_i^{T}\midd \sigma^{T})\\[-1pt]
&&\hphantom{{}\times\sum_{\sigma_1^{T}\cdots\sigma_{k}^{T}}}
{}\times
\prod_{i=r+1}^{k}\{\prob(\sigma_i^{T}\midd \sigma^{T}
)- \Psi_{{\mathrm{odd}},T}^d(\sigma_i^{T}\midd \sigma^{T})\} ,
\nonumber\\[-12pt]\nonumber
\end{eqnarray}
\begin{eqnarray}
\tK(\cdots)
&\equiv&\cases{
\displaystyle \ind\Biggl\{\sigma(t+1) = \sign\Biggl(\sum_{i=1}^{k}\sigma_i(t)
\Biggr)\Biggr\}, &\quad if $\displaystyle \sum_{i=1}^{k}\sigma_i(t)\neq0 $,\vspace*{2pt}\cr
\dfrac{1}{2}, &\quad otherwise.}
\end{eqnarray}
\end{lemma}
\begin{pf}
The proof is
very similar to the one of Lemma \ref{lemma:PsiRecursion} (in Appendix
\ref{app:ExactCavityRec}),
and we omit it for the sake of space.
\end{pf}
\begin{lemma}
\label{lemma:theta_lb_reinterpreted}
Assume that $\bar{\Psi}_{{\mathrm{odd}},T} \succ0$
for some $T\ge0$ and $\theta\in[0,1]$. Then for the same $\theta$ and
$T$, there exists an alternating $\lceil\frac{k+1}{2} \rceil$-core
of $\G$ with positive probability with
respect to $\us(T)$.
\end{lemma}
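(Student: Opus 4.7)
The plan is to show $\prob(\hC_{\textup{odd}}(T)) > 0$ under the hypothesis $\bar{\Psi}_{\textup{odd},T} \succ 0$, since $\hC_{\textup{odd}}(T) \subseteq \{\hH_{\textup{odd}}(T) \ne \emptyset\}$. Because $\prob(\hC_{\textup{odd}}(T)) = \sum_{\sigma_0^T} \Psih_{\textup{odd},T}(\sigma_0^T)$, it suffices to exhibit a single trajectory $\sigma_0^T$ with $\Psih_{\textup{odd},T}(\sigma_0^T) > 0$.

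First I would pass to the limit $d \to \infty$ in the recursions of Lemmas \ref{lemma:PsiRecursion} and \ref{lemma:relate_rooted_complete}. The left-hand sides converge monotonically to $\Psi_{\textup{even},T}$, $\Psi_{\textup{odd},T}$, $\Psih_{\textup{even},T}$, and $\Psih_{\textup{odd},T}$ by Lemma \ref{lemma:alt_children_seq} and its analog for $\G$. The right-hand sides are polynomials of uniformly bounded degree in the finitely many $\Psi^d$-values, so pointwise convergence $\Psi^d \to \Psi$ passes through, yielding self-consistent fixed-point identities for all four quantities.

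The rest reduces to evaluating one carefully chosen summand on the constant trajectory $(-1)_0^T$. Applied to the limit form of Eq.~(\ref{eq:Alt_fp2}) with $(\sigma_\root)_0^T = u_0^T = (-1)_0^T$, the summand with $r = k-1$ in which every one of the $k-1$ children has trajectory $(-1)_0^T$ is strictly positive: the indicator $\ind(\sigma_\root(T) = -1)$ is active, each kernel factor $\K_{-1}(-1 \mid -(k-1))$ equals $1$ (the majority argument at every time is $-k \neq 0$, with sign $-1$), and the product $\prod_{i=1}^{k-1}\Psi_{\textup{odd},T}((-1)_0^T || (-1)_0^T) > 0$ by hypothesis. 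Since all remaining summands are non-negative (as $0 \le \Psi_{\textup{odd},T} \le \prob$), this yields $\Psi_{\textup{even},T}((-1)_0^T || (-1)_0^T) > 0$. Feeding this into the limit form of Eq.~(\ref{eq:Alt_rc_fp1}) with $\sigma_0^T = (-1)_0^T$, the summand with $r = k$ and all $k$ children trajectories equal to $(-1)_0^T$ has an empty $(\prob - \Psi_{\textup{even},T})$ product, kernel factors $\hK(-1 \mid -k) = 1$, and $\prod_{i=1}^{k}\Psi_{\textup{even},T}((-1)_0^T || (-1)_0^T) > 0$ by the previous step. Hence $\Psih_{\textup{odd},T}((-1)_0^T) > 0$, which is what was needed.

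The hard part is the pass-to-the-limit step: one must argue that the monotone limits $\Psi = \lim_d \Psi^d$ and $\Psih = \lim_d \Psih^d$ satisfy the fixed-point equations obtained by substituting $\Psi$ for $\Psi^d$ on the right-hand side. This is essentially bounded convergence on a finite index set, but requires care because each factor $\Psi^d$ decreases in $d$ while $\prob - \Psi^d$ increases, so one relies on pointwise convergence of the entire polynomial integrand rather than any direct monotonicity argument. Once this identity is secure, the combinatorial verification on the all-$(-1)$ trajectory is immediate, as sketched above.
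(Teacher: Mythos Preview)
Your proof is correct and follows essentially the same approach as the paper: pass to the $d\to\infty$ limit in the recursions and exhibit a strictly positive summand. The only difference is that the paper takes a one-step shortcut, applying the limit of Eq.~(\ref{eq:Alt_rc_fp2}) directly to conclude $\Psih_{\textup{even},T}(\sigma_0^T)>0$ for any $\sigma_0^T$ with $\sigma(T)=-1$, thereby producing a core with $v$ even; you instead take two steps, first using Eq.~(\ref{eq:Alt_fp2}) to get $\Psi_{\textup{even},T}((-1)_0^T\|(-1)_0^T)>0$ and then Eq.~(\ref{eq:Alt_rc_fp1}) to get $\Psih_{\textup{odd},T}((-1)_0^T)>0$, producing a core with $v$ odd. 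Your version is slightly longer but also more explicit about which summand is positive (the paper's assertion that ``every term in the summation over $r$\dots is positive'' is stated without identifying a witness), so the extra step buys clarity at little cost.
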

\begin{pf}
Take the limit $d \rightarrow\infty$ in (\ref{eq:Alt_rc_fp2}).
We have,
%
%
\begin{eqnarray}\label{eq:Alt_rc_asymp}
&&\Psih_{{\mathrm{even}},T}(\sigma^T)\nonumber\\[-1pt]
&&\qquad= \ind\bigl(\sigma(T)=-1\bigr) \prob
_0(\sigma(0))\nonumber\\[-8pt]\\[-8pt]
&&\qquad\quad{}\times
\sum_{r=\lceil({k+1})/{2} \rceil}^{k}\pmatrix{k\cr
r} \sum_{\sigma_1^{T}\cdots\sigma_{k}^{T}}
\prod_{t=0}^{T-1}
\tK\bigl(\sigma(t+1)|\sigma_{\partial v}(t)\bigr)
\prod_{i=1}^{r}\Psi_{{\mathrm{odd}},T}
(\sigma_i^{T}\midd \sigma^{T})
\nonumber\\[-1pt]
&&\qquad\quad\hphantom{{}\times
\sum_{r=\lceil({k+1})/{2} \rceil}^{k}\pmatrix{k\cr
r} \sum_{\sigma_1^{T}\cdots\sigma_{k}^{T}}}
{}\times\prod_{i=r+1}^{k}\{\prob(\sigma_i^{T}\midd \sigma^{T}
)- \Psi_{{\mathrm{odd}},T}(\sigma_i^{T}\midd \sigma^{T})
\} .\nonumber
\end{eqnarray}
Now, consider any $\theta$ such that $\bar{\Psi}_{{\mathrm{odd}},T}
\succ0$.
Consider $\Psih_{{\mathrm{even}},T}(\sigma^T)$ for any $\sigma^T$
with $\sigma(T)=-1$.
Note that every term in the summation over $r$ in (\ref{eq:Alt_rc_asymp})
is nonnegative, and, in fact, positive when $\bar{\Psi}_{\mathrm
{odd},T} \succ0$ holds.
Hence,\vspace*{1pt} $\Psih_{{\mathrm{even}},T}(\sigma^T)>0 \Rightarrow\prob
_\theta(\exists\mbox{ alternating } \lceil\frac{k+1}{2}
\rceil\mbox{-core } \HH\mbox{ of } \G\mbox{ with
respect to } \us(T) \mbox{ s.t. } v \in\HH)> 0$.
\end{pf}

The lower bound on $\theta_*(k)$ is an immediate consequence of the above
lemmas.
\begin{pf*}{Proof of Theorem \ref{thm:LowerBound}}
The thesis follows Lemmas \ref{lemma:alt_core_persistence}, \ref
{lemma:PsiRecursion} and \ref{lemma:theta_lb_reinterpreted}, the
definition of $\theta_*$ in (\ref{eq:ThresholdDef}) and
(\ref{eq:psi_odd_limit_proved}).
\end{pf*}

%
%
\subsection{Evaluating the lower bound}

Equations (\ref{eq:Alt_fp1}) and (\ref{eq:Alt_fp2}) can be iterated
with initial values given by (\ref{eq:initial_psi}) to compute
$\theta_{\mathrm{lb}}(k,T)$.
To simplify the recursion, we notice that the dynamics is ``bipartite'':
each of $\coinset$
and $\us(0)$ can be partitioned $\coinset= (\widehat{\coinset
},\widetilde
{\coinset}), \us(0) = (\widehat{\us}(0),\widetilde{\us}(0))$
such that $(\widehat{\coinset},\widehat{\us}(0)$ and $(\widetilde
{\coinset
},\widetilde{\us}(0))$ never ``interact''
in the majority dynamics on an infinite tree. This remark reduces the
number of variables in the recursions equations (\ref{eq:Alt_fp1})
and (\ref{eq:Alt_fp2}). Further, for small values of $T$, instead of summing
over all possible trajectories of children, it is faster to sum over
all possibilities
for the histogram of the trajectories followed by children.


In Table \ref{table:lower_bounds}, we present some of the lower bounds
$\theta_{\mathrm{lb}}(k,T)$ computed through this approach, and compare
them with the empirical threshold $\theta_{*,\mathrm{rgraph}}(k)$
deduced from numerical simulations (cf. Section
\ref{sec:numerical_results}). In the same table, we present the large
$k$ asymptotic behavior of $\theta_{\mathrm{lb}}(k,T)$ for fixed $T$.

%
%
\begin{table}[b]
\caption{Computed lower bound values
$\theta_{\mathrm{lb}(k,T)}$}\label{table:lower_bounds}\vspace*{-2pt}
\begin{tabular*}{\tablewidth}{@{\extracolsep{\fill}}
l@{\hspace*{-0pt}}c@{\hspace*{-2pt}}k{2.5}k{2.6}k{2.7}
k{2.6}@{\hspace*{-2pt}}c@{\hspace*{-2pt}}c@{}}
\hline
& & \multicolumn{4}{c}{$\bolds{T}$} & &\\[-4pt]
& & \multicolumn{4}{c}{\hrulefill\hspace*{-6pt}} & &\\
\hphantom{...\hspace*{1pt}}$\bolds{k}$ &
& \multicolumn{1}{c}{$\bolds{0}$\hspace*{2pt}}
& \multicolumn{1}{c}{$\bolds{1}$\hspace*{2pt}}
& \multicolumn{1}{c}{$\bolds{2}$\hspace*{2pt}}
& \multicolumn{1}{c}{$\bolds{3}$\hspace*{2pt}}
& & \multicolumn{1}{c@{}}{\multirow{2}{85pt}[12pt]{{\centering\textbf{\textit{Simulation
threshold}}
$\hphantom{000}\hspace*{3pt}\bolds{\theta_{*,\mathrm{rgraph}}(k)}$}}\hspace*{-6pt}}\\
\hline
\hphantom{000}3 & \multirow{15}{2pt}{\hspace*{1pt}\rule{0.5pt}{158.5pt}
\hspace*{-6pt}}
&
\bolds{+0},\bolds{.508}  &
\bolds{+0},\bolds{.568} & \bolds{+0},\bolds{.572} &
\bolds{+0},\bolds{.574} &
\multirow{15}{2pt}{\hspace*{-0pt}$\matrix{\vspace*{-12pt}
\cr\vdots\vspace*{-4pt}
\cr\vdots\vspace*{-4pt}
\cr\vdots\vspace*{-4pt}
\cr\vdots\vspace*{-4pt}
\cr\vdots\vspace*{-4pt}
\cr\vdots\vspace*{-4pt}
\cr\vdots\vspace*{-4pt}
\cr\vdots\vspace*{-4pt}
\cr\vdots\vspace*{-4pt}
\cr\vdots\vspace*{-4pt}
\cr\vdots\vspace*{-4pt}
\cr\vdots\vspace*{-4pt}
\cr\vdots\vspace*{-5pt}
\cr\cdot}$\hspace*{-2pt}}
& \textit{0.58}\hphantom{0}\\
\hphantom{000}5 &  & -0,.084 & \bolds{+0},\bolds{.026} & \bolds{+0},\bolds{.048} &
\bolds{+0},\bolds{.052} & &\textit{0.054}\\
\hphantom{000}7 &  & -0,.14 & -0,.020 &
\bolds{+0},\bolds{.002} & \bolds{+0},\bolds{.008} & & \textit{0.010}\\
\hphantom{000}9 &  & -0,.14 & -0,.030 & -0,.006 & -0,.0008 &\\
\hphantom{00}11 &  & -0,.12 & -0,.028 & -0,.010 & -0,.0028 & \\
\hphantom{00}15 &  & -0,.12 & -0,.024 & -0,.008 & -0,.0028 & \\
\hphantom{00}21 &  & -0,.084 & -0,.018 & -0,.0054 & -0,.0018& \\
\hphantom{00}31 &  & -0,.080 & -0,.014 & -0,.0032 & -0,.0010& \\
\hphantom{00}51 &  & -0,.046 & -0,.0070 & -0,.0014 & -0,.00038 &\\
\hphantom{0}101 &  & -0,.026 & -0,.0032 & -0,.00048 &&\\
\hphantom{0}201 &  & -0,.016 & -0,.0014 & -0,.00014 &&\\
\hphantom{0}401 &  & -0,.0084 & -0,.00048 & -0,.000040 &&\\
1001 &  & -0,.0035 & -0,.00012 & -0,.000008 &&\\
[4pt]
\textit{Asymptotics}
&  & \multicolumn{1}{c}{$-\Theta(\frac{\sqrt{\log k}}{k})$}
& \multicolumn{1}{c}{$-\Theta(\frac{\sqrt{\log k}}{k^{3/2}})$}
& \multicolumn{1}{c}{$-\Theta(\frac{\sqrt{\log
k}}{k^2})$}&&\\[3pt]
\hline
\end{tabular*}
\end{table}


As observed in the \hyperref[intro]{Introduction}, $\theta_*(k)\ge 0$
by symmetry and monotonicity. Therefore, the lower bounds are
nontrivial only if $\theta_{\mathrm{lb}}(k,T)>0$. It turns out that for
any fixed $T$, $\theta_{\mathrm{lb}}(k,T)$ becomes negative at large
$k$. Nevertheless, for $k\le 7$, our lower bounds are positive and
closely approximate $\theta_{*,\mathrm{rgraph}}(k)$, indicating that the
bounds may provide good estimates of $\theta_*(k)$.

The values of $\theta_{\mathrm{lb}}(k,T)$ are much lower for even
values of
$k$.
For example, for $k=4$, $6$, $8$, $\theta_{\mathrm{lb}}(k,3)
\approx-0.22$, $-0.09$, $-0.05$, respectively.
This is as expected, since our requirement of an alternating
$\lceil\frac{k+1}{2} \rceil$-core is more stringent for
even $k$.
On the other hand, numerical simulations suggest
that $\theta_*(k)=0$ for small even values of $k$.

\begin{appendix}

\section{Proofs of preliminary results}\label{app:prelim}

This section presents the proofs of Lemmas \ref{lemma:LessThanOne}
and \ref{lemma:Local}, with some auxiliary results proved in the
second subsection.
%
%
\subsection{Proofs}

\mbox{}

\begin{pf*}{Proof of Lemma \ref{lemma:LessThanOne}}
Consider the subgraph $\G_+$ of $\G$ induced by
vertices $i\in\V$, such that $\sigma_i(0)=+1$:
each vertex belongs to this subgraph independently with probability
$(1+\theta)/2$.
Let $\G_{+,q}$ be the maximal subgraph of $\G_+$ with minimum degree
$q=k-\lfloor(k+1)/2\rfloor+1$. It is clear that no vertex in $\G
_{+,q}$ ever
flips to $-1$ under the majority process. Consider a modified
initial condition such that $\sigma_i(0) = +1$ for
$i\in\G_{+,q}$, and $\sigma_i(0) = -1$ otherwise. By monotonicity
of the dynamics, it is sufficient to show that such a modified
initial condition converges to $\up$ under the majority process.

Notice that $\HH=\G\setminus\G_{+,q}$ is the subgraph induced by the
final set of a bootstrap percolation process with initial density
$\rho= (1-\theta)/2$ and threshold $\lfloor(k+1)/2\rfloor$ (a~vertex
joins if at least $\lfloor(k+1)/2\rfloor$ of its neighbors have
joined). It is proven in~\cite{FontesTree}, Theorem 1.1, that there
exists $\rho_{\mathrm{c}}(k)>0$ such that, for
$\rho<\rho_{\mathrm{c}}(k)$, $\HH$ is almost surely the disjoint union
of a of countable number of finite trees. This implies the thesis.
Indeed, we can restrict our attention to any such finite tree occupied
by $-1$, and surrounded by $+1$ elsewhere. On such a tree, the set of
vertices such that of $\sigma_i(t) = -1$ never increases, and at least
one vertex quits the set at each iteration. Therefore, any such tree
turns to $+1$ in finitely many iterations.
\end{pf*}
\begin{pf*}{Proof of Lemma \ref{lemma:Local}}
Let $\G_n = ([n],\Ed_n)$ be a random graph of degree $k$ over
$n$ vertices distributed according to the configuration model.
We recall that a graph is generated with this distribution by
attaching $k$ labeled half-edges to each vertex $i\in[n]$
and pairing them according to a uniformly random matching among $nk$ objects.

The proof of Lemma \ref{lemma:Local} is based on the analysis
of the majority process on the graph $\G_n$.
We will denote by $\prob_{\theta,n}$ the law of this process
when the spins $\{\sigma_i(0)\}_{i\in[n]}$ are initialized to
i.i.d. random variables with $\E_{\theta,n}\{\sigma_i(0)\} = \theta$.
We use the following auxiliary results.
\begin{lemma}\label{lemma:GraphTree}
For any fixed $i\in\naturals$, $j\in\V$ and $t\ge0$ we have
%
%
\begin{equation}
\lim_{n\to\infty}\E_{\theta,n}\{\sigma_i(t)\} =
\E_{\theta}\{\sigma_j(t)\} .
\end{equation}
\end{lemma}
\begin{lemma}\label{lemma:Concentration}
Let $\{\sigma_i(t)\}_{i\in[n], t\in\Z_+}$ be distributed according to
the majority process on $\G_n$, and define $\const(k,t) \equiv
4(t+1)(k^{t+1}-1)^2/(k-1)^2$.
Then
%
%
\begin{equation}
\prob_{\theta,n}\Biggl\{
\Biggl|\sum_{i=1}^n\sigma_i(t)-n\E_{\theta,n}\sigma_1(t)
\Biggr|
\ge n\ve\Big| \G_n\Biggr\}\le2 \exp\biggl\{-\frac{n\ve
^2}{2\const(k,t)}\biggr\}.
\end{equation}
\end{lemma}
\begin{lemma}\label{lemma:Attractivity}
There exists $\delta_*$, $k_*>0$ such that for any $k\ge k_*$
there is a set $\SSS_{k,n}$ of ``good graphs'' such that
$\prob\{\G_n\in\SSS_{k,n}\}\to1$, and
the following happens. For any $\G_n\in\SSS_{k,n}$ and any
initial condition $\{\sigma_i(0)\}_{i\in[n]}$ on the vertices of
$\G_n$ with $\sum_{i=1}^n\sigma_i(0)\ge n(1-2\delta_*/k)$, we have
%
%
\begin{equation}
\sum_{i=1}^n\bigl(1-\sigma_i(1)\bigr)\le\frac{3}{4} \sum_{i=1}^n\bigl(1-\sigma
_i(0)\bigr) .
\end{equation}
\end{lemma}

Let us now turn to the actual proof. Choose $\delta_*$ and
$k_*$ as per Lemma \ref{lemma:Attractivity} and assume $k\ge k_*$.
By assumption, there exists a time $t_*$ such that
$\E_{\theta}\{\sigma_i(t_*)\}\ge1-\delta_*/k$. By Lemmas
\ref{lemma:GraphTree} and \ref{lemma:Concentration},
for all $n$ large enough we have
%
%
\begin{equation}
\prob_{\theta,n}\Biggl\{\sum_{i=1}^n\sigma_i(t_*)\ge
n\biggl(1-2\frac{\delta}{k}\biggr)\Biggr\}
\ge1-e^{-Cn} .
\end{equation}

Assume $\sum_{i=1}^n\sigma_i(t_*)\ge n(1-2\frac{\delta
_*}{k})$ and $\G_n\in\SSS_{k,n}$.
Then, by Lemma \ref{lemma:Attractivity}, and any $t\ge t_*$ we have
%
%
\begin{equation}
\sum_{i=1}^n\bigl(1-\sigma_i(t)\bigr)\le n (3/4)^{t-t_*} .
\end{equation}
Combining this with the above remarks, and using the
symmetry of the graph distribution with respect to permutation of the vertices,
we get
%
%
\begin{equation}
\prob_{\theta,n}\{\sigma_1(t)\neq+1\}
\le2(3/4)^{t-t_*}+\prob\{\G_n\notin\SSS_{k,n}\}
+e^{-Cn} .
\end{equation}
By Lemma \ref{lemma:GraphTree}, this implies
$\prob_{\theta}\{\sigma_i(t)\neq+1\}
\le5(3/4)^{t-t_*}$ which, by Borel--Cantelli implies
$\sigma_i(t)\to+1$ almost surely, whence the thesis follows.
\end{pf*}

\subsection{Proofs of auxiliary results}

\mbox{}

\begin{pf*}{Proof of Lemma \ref{lemma:GraphTree}} Fix a vertex $i$ in
$\G_n$,
and denote by $\Ball_i(t)$ the subgraph induced by vertices
whose distance from $i$ is at most $t$.
The value of $\sigma_{i}(t)$ only depends on $\G_n$ through the
$\Ball_i(t)$. If $\Ball_i(t)$ is a $k$-regular tree of depth $t$
[to be denoted by $\Tree(t)$] then the distribution of $\sigma_j(t)$
is the same that would be obtained on $\G$, whence
\[
|\E_{\theta,n}\{\sigma_i(t)\}-\E_{\theta}\{\sigma_j(t)\}
|
\le2 \prob_{\theta,n}\{\Ball_i(t)\not\simeq\Tree(t)\} .
\]
The thesis follows since
$\prob_{\theta,n}\{\Ball_i(t)\not\simeq\Tree(t)\}\le\const^t/n$
for some constant
$\const$ (dependent only on $k$).
\end{pf*}
\begin{pf*}{Proof of Lemma \ref{lemma:Concentration}}
Let $X_n(t) \equiv\sum_{i=1}^n\sigma_i(t)$. This is a
deterministic function of the $n(t+1)$ bounded\vspace*{1pt} random variables
$\{\sigma_i(0)\}_{i\in[n]}$ and of $\{A_{i,s}\}_{i\in[n], s\le t}$.
Further, it is a Lipschitz function with constant
$\widehat{\const}(k,t)\le2(k^{t+1}-1)/(k-1)$, because any change in
$\sigma_i(0)$, or
$A_{i,s}$ only influences the values $\sigma_j(t)$ within a ball of radius
$t$ around $i$. By the Azuma--Hoeffding inequality,
%
%
\begin{equation}
\prob_{\theta,n}\{|X_n(t)-\E_{\theta,n}X_n(t)|\ge\Delta
\}
\le2 \exp\biggl\{-\frac{\Delta^2}{2n(t+1)\widehat{\const
}^2}\biggr\} ,
\end{equation}
which implies the thesis.
\end{pf*}
\begin{pf*}{Proof of Lemma \ref{lemma:Attractivity}}
Although the proof follows from a standard expansion argument, we
reproduce it here for the convenience of the reader.

Recall that a graph $\G_n$ over $n$ vertices is a $(k(1-\ve),\delta/k)$
(vertex) expander if each subset $\W$ of at most $n\delta/k$ vertices
is connected to at least $k(1-\ve)|\W|$ vertices in the rest of the graph.
It is known that there exists $\delta_*>0$ such that, for all $k$
large enough,
a random $k$ regular graph is, with high probability,
a $(3k/4,\delta_*/k)$ expander \cite{Expander}.
We let $\SSS_{k,n}$ be the set of $k$-regular graphs
$\G_n$ that are $(3k/4,\delta_*/k)$ expanders.

Let $\W$ be the set of vertices $i\in[n]$ such that $\sigma_i(0)=-1$.
By hypothesis, $|\W|\le n\delta/k$. Denote by $n_-$ the number of
vertices in
$[n]\setminus\W$ that have at least $\lceil k/2\rceil$ neighbors in
$\W$ [and hence, such that potentially $\sigma_i(1)=-1$], and by $n_+$
the set of vertices that have between $1$ and $\lceil k/2\rceil-1$
neighbors in $\W$. Further, let $l$ be the number of edges
between vertices in $\W$. Then
\[
\biggl\lceil\frac{k}{2}\biggr\rceil n_- + n_+ + 2l \le k |\W| ,\qquad
n_-+n_+ \ge\frac{3}{4}k |\W| ,
\]
where the first inequality follows by edge-counting and the second
by the expansion property.
By taking the difference of these inequalities, we get
\[
\biggl(\biggl\lceil\frac{k}{2}\biggr\rceil-1\biggr) n_- + 2l \le
\frac{k}{4} |\W| .
\]
Let $\W'$ be the set of vertices such that $\sigma_i(1) = -1$.
Thus $\W'$ is contained in the set of vertices with at
least $\lceil k/2\rceil$ neighbors in $\W$ whence
$|\W'| \le n_-\le n_-+ (2l)/(\lceil k/2\rceil-1)$,
and therefore
\[
|\W'|\le\frac{k}{4(\lceil k/2\rceil-1)} |\W|,
\]
which yields the thesis.\vadjust{\goodbreak}
\end{pf*}

%
%
\section{Proof of the exact cavity recursion}\label{app:ExactCavityRec}

\vspace*{-8pt}

\begin{pf*}{Proof of Lemma \ref{lemma:ExactCavity}}
Throughout the proof, we denote the neighbors of the root as $\{
1,\ldots
,k-1\}$.
Let $\us(0)$ be the vector of initial spins of the root and all the
vertices up to a distance $T$ from the root.
For each $i\in\{1,\ldots, k-1\}$, let
$\us_i(0)$ be the vector of initial spins of the sub-tree rooted at
$i$, and not including the root, and up to the same distance $T$ from
the root. Clearly, if we choose an appropriate ordering, we have
$\us(0)=(\sigma_\rroot(0),\us_1(0),\us_2(0), \ldots, \us_{k-1}(0))$.
Finally, we denote by $\coinset^T$ the set of coin flips
$\{\omega_{i,t}\}$ with $t\le T$, and $i$ at distance at most $T$ from
the root.
As above, we have $\coinset^T = (\omega_{\rroot}^T, \coinset
_{1}^{T},\ldots,\coinset_{k-1}^{T})$,
where $\coinset_{i}^{T}$ is the subset of coin flips in the subtree
rooted at
$i\in\{1,\ldots,k-1\}$.
By definition, the trajectory $\sigma_\rroot^{T+1}$
is a deterministic function of $\us(0)$, $u^{T+1}$ and $\coinset^T$.
We shall\vspace*{1pt} denote this function by $\cF$ and write
$\sigma_{\rroot}^{t}= \cF^{t}(\us(0),u^{t+1},\coinset^t)$.
This function is uniquely determined by the update rules.
We shall write the latter as
%
%
\begin{equation}
\sigma_{\rroot}(t+1) = f(\sigma_{\rroot}(t),\us_{\partial\rroot}(t)
,u(t),\omega_{\rroot,t}) .
\end{equation}
We have therefore
%
%
\begin{eqnarray}\label{eq:recursion_defn}
&&\prob(\sigma_\rroot^{T+1}=\traj^{T+1}\midd u^{T+1}) \nonumber\\[-8pt]\\[-8pt]
&&\qquad= \E
_{\coinset^T}\sum_{\us(0)}
\prob(\us(0)) \ind\bigl(
\traj^{T+1} = \cF^{T+1}(\us(0),u^{T+1},\coinset^T)\bigr) .\nonumber
\end{eqnarray}
Now we analyze each of the terms appearing in this sum.
Since the initialization is i.i.d., we have
%
%
\begin{equation}\label{eq:recursion_prob}
\prob(\us(0))=
\prob_0(\sigma_\rroot(0))\prob(\us_1(0)) \prob(\us_2(0)) \cdots
\prob(\us_{k-1}(0)) .
\end{equation}
Further since the coin flips $\omega_{i,t}$ and $\omega_{j,t'}$ are
independent for
$i\neq j$, we have
%
%
\begin{equation}\label{eq:recursion_prob2}
\E_{\coinset^T}\{\cdots\} = \E_{\omega_{\rroot}^T} \E_{\coinset
_{1}^{T}}\cdots
\E_{\coinset_{k-1}^{T}}\{\cdots\} .
\end{equation}
Finally, the function $\cF^{T+1}(\cdots)$ can be decomposed as follows:
%
%
\begin{eqnarray}\label{eq:recursion_ind}\quad
&&\ind\bigl(\traj^{T+1} = \cF^{T+1}(\us(0),u^{T+1},\coinset
^T)\bigr)\nonumber\\
&&\qquad=
%
\ind\bigl( \sigma_\rroot(0)=\traj(0) \bigr)\nonumber\\[-8pt]\\[-8pt]
&&\qquad\quad{}\times \sum_{\sigma_1^{T}\cdots\sigma_{k-1}^T}
\prod_{t=0}^{T}\ind\bigl(\traj(t+1) = f(\sigma_{\rroot}(t),\us
_{\partial\rroot}(t)
,u(t),\omega_{\rroot,t}) \bigr)\nonumber\\
&&\hphantom{{}\times\sum_{\sigma_1^{T}\cdots\sigma_{k-1}^T}}
\qquad\quad\times
\prod_{i=1}^{k-1}\ind\bigl(\sigma_i^{T}
=
\cF^{T}(\us_i(0),\traj^{T},\coinset_{i}^{T-1})\bigr).\nonumber\vadjust{\goodbreak}
\end{eqnarray}

Using (\ref{eq:recursion_prob}), (\ref{eq:recursion_prob2})
and (\ref{eq:recursion_ind}) in (\ref{eq:recursion_defn})
and separating terms that depend only on $\us_i(0)$, we get
\begin{eqnarray*}
&&\prob(\sigma_\rroot^{T+1} = \traj^{T+1}\midd u^{T+1})\\
&&\qquad =
\prob(\traj(0))
\sum_{\sigma_1^{T}\cdots\sigma_{k-1}^{T}}
\prod_{t=0}^T \ind\bigl(\traj(t+1) = f(\sigma_{\rroot}(t),\us
_{\partial\rroot}(t)
,u(t),\omega_{\rroot,t}) \bigr) \\
&&\qquad\quad\hphantom{\prob(\traj(0))
\sum_{\sigma_1^{T}\cdots\sigma_{k-1}^{T}}}
{}\times\prod_{i=1}^{k-1} \sum_{\us_i(0)} \prob(\us_i(0))
\ind\bigl(
\sigma_i^{T}
= \cF^{T}(\us_i(0),\traj^{T},\coinset_{i}^{T-1})
\bigr) .
\end{eqnarray*}
The recursion equation (\ref{eq:Recursion}) follows.
\end{pf*}
\begin{pf*}{Proof of Lemma \ref{lemma:PsiRecursion}}
Consider any $d \ge0$. We denote the neighbors of the root as $\{
1,\ldots,k-1\}$. We reuse the definitions of
$\us(0)$ and $\us_i(0)$ for $1 \le i \le(k-1)$ from Lemma \ref
{lemma:ExactCavity}, with depth $T$ replaced with depth $(T+d+1)$.
We denote by $\coinset^{T-1}$ the set of coin flips
$\{\omega_{i,t}\}$ with $t\le T-1$, and $i$ at distance at most
$T+d+1$ from the root.
We have $\coinset^{T-1} = (\omega_{\rroot}^{T-1}, \coinset
_{1}^{T-1},\ldots,\coinset_{k-1}^{T-1})$,
where $\coinset_{i}^{T-1}$ is the subset of coin flips in the subtree
rooted at
$i\in\{1,\ldots,k-1\}$.
Let $\G_i$ be the subtree rooted at $i$.
Define $\HH_{i,{\mathrm{even}}}^d(T)$, as the maximal depth-$d$ rooted
alternating
$\lceil\frac{k+1}{2}\rceil$-core of $\G_i$ with
respect to $\us_i(T)$,\footnote{More precisely, we take only the
restriction of $\us_i(T)$ up to depth $d$ in defining $\HH_{i,\mathrm
{even}}^d(T)$.} such that $i$ is even.
Define $\EvC_{i,{\mathrm{even}}}^d(T)= \{\rroot\in\V_{\HH
_{i,{\mathrm{even}}}(T)}^d \}$. Let $\Ev_i(\traj^T) \equiv\{\sigma
_i(t)=\traj(t), 0 \le t\le T \}$.
We define $\EvA_{i,{\mathrm{even}}}^d(T, \traj^T)=\EvC_{i,\mathrm
{even}}^d(T) \cap\Ev_i(\traj^T)$.
Hence,\vspace*{1pt} we have mirrored the definitions for
the root $\rroot$ at the child $i$.

Let $\C=\{1,2,\ldots,k-1\}$. By Definition \ref
{def:rooted_partial_alt_r_core}, it follows that (here $\EvA
^\complement$ denotes the complement of an
event $\EvA$)
%
%
\begin{eqnarray}\label{eq:lb_child_cores}\qquad
\EvC_{{\mathrm{odd}}}^{d+1}(T)&=& \mathop{\bigcup_{\SSS\subseteq\C
}}_{|\SSS| \ge\lceil(k-1)/2 \rceil}
\bigcap_{i \in\SSS} \EvC_{i,{\mathrm{even}}}^d(T) \bigcap_{j \in
\C
-\SSS} (\EvC_{j,{\mathrm{even}}}^d(T))^\complement
,\nonumber\\[-2pt]
\EvC_{{\mathrm{even}}}^{d+1}(T)&=& \ind\bigl(
\sigma_\rroot(T)=-1\bigr)\\[-2pt]
&&{}\times\mathop{\bigcup_{\SSS\subseteq\C}}_{|\SSS| \ge\lceil(k-1)/2
\rceil}
\bigcap_{i \in\SSS} \EvC_{i,{\mathrm{odd}}}^d(T) \bigcap_{j \in
\C
-\SSS} (\EvC_{j,{\mathrm{odd}}}^d(T))^\complement.\nonumber
\end{eqnarray}

Let $\J_{{\mathrm{odd}}}^d(\us(0),u^T,\coinset^{T}) \equiv\ind
(\EvC
_{{\mathrm{odd}}}^d(T))$. When $\us(0)$ is passed as an argument to
$\J
_{{\mathrm{odd}}}^d$, we implicitly mean that only the restriction of
$\us(0)$
to depth $T+d$ from the root is under consideration.
Note that $\J_{{\mathrm{odd}}}^d$
is a deterministic function. Similarly define $\J_{{\mathrm{even}}}^d$.
From (\ref{eq:lb_child_cores}), we have
%
%
\begin{eqnarray}\label{eq:lb_child_core_fn}\quad
&&\J_{{\mathrm{odd}}}^{d+1}(\us(0),u^T,\coinset^{T})\nonumber\\[-2pt]
&&\qquad= \mathop{\sum
_{\SSS\subseteq\C}}_{|\SSS| \ge\lceil(k-1)/2 \rceil}
\prod_{i \in\SSS} \J_{i,{\mathrm{even}}}^d(\us_i,\sigma_\rroot
^T,\coinset_{i}^{T})\\[-2pt]
&&\qquad\quad\hphantom{\mathop{\sum
_{\SSS\subseteq\C}}_{|\SSS| \ge\lceil(k-1)/2 \rceil}}
{}\times\prod_{j \in\C-\SSS} \bigl(1-\J_{j,{\mathrm{even}}}^d(\us
_j,\sigma_\rroot^T,\coinset_{j}^{T})\bigr).\nonumber
\end{eqnarray}

Define\vspace*{1pt} $f(\cdot, \cdot,\cdot,\cdot)$ and $\cF(\cdot,\cdot,\cdot
)$ as in the proof of Lemma \ref{lemma:ExactCavity} (cf. Appendix \ref
{app:ExactCavityRec}).
We have
$\ind(\EvA_{{\mathrm{odd}}}^{d+1}(T,\traj^T)) = \ind(\Ev(\traj^T))
\ind(\EvC_{{\mathrm{odd}}}^{d+1}(T))$,
leading to
%
%
\begin{eqnarray}\label{eq:lb_recursion_defn}
\Psi_{{\mathrm{odd}}}^{d+1}(\traj^{T}\midd u^{T})
&=& \E
_{\coinset^{T-1}}\sum_{\us(0)}
\prob(\us(0)) \ind\bigl(
\traj^{T} = \cF^{T}(\us(0),u^{T},\coinset^{T-1})\bigr)\nonumber\\[-10pt]\\[-10pt]
&&\hphantom{\E
_{\coinset^{T-1}}\sum_{\us(0)}}
{}\times\J
_{{\mathrm{odd}}}^{d+1}(\us(0),u^{T},\coinset^{T+d}) .\nonumber
\end{eqnarray}
Subtracting (\ref{eq:lb_recursion_defn}) from (\ref
{eq:recursion_defn}) after replacing $T+1$ by $T$,
we get
%
%
\begin{eqnarray}\label{eq:lb_recursion_diff}
&&\prob(\traj^{T}\midd u^{T})-\Psi_{{\mathrm{odd}}}^{d+1}
(\traj^{T}\midd u^{T}) \nonumber\\
&&\qquad= \E_{\coinset^{T-1}}\sum_{\us(0)}
\prob(\us(0))\ind\bigl(
\traj^{T} = \cF^{T}(\us(0),u^{T},\coinset^{T-1})\bigr)\\
&&\hphantom{\E_{\coinset^{T-1}}\sum_{\us(0)}}\qquad\quad
{}\times\bigl(1-\J
_{{\mathrm{odd}}}^{d+1}(\us(0),u^{T},\coinset^{T+d})\bigr) .\nonumber
\end{eqnarray}
Equations (\ref{eq:recursion_prob}) and (\ref{eq:recursion_prob2})
(with $T$ replaced by $T-1$) continue to hold.
Using (\ref{eq:lb_child_core_fn}), we have the following
decomposition, similar to (\ref{eq:recursion_ind}):
%
%
\begin{eqnarray}\label{eq:lb_recursion_ind}
&&\ind\bigl(\traj^{T} = \cF^{T}(\us(0),u^{T},\coinset^{T-1})\bigr)
\J_{{\mathrm{odd}}}^{d+1}(\us(0),u^{T},\coinset^{T+d})
\nonumber\\
&&\qquad=\ind\bigl( \sigma_\rroot(0)=\traj(0) \bigr)\nonumber\\
&&\qquad\quad{}\times\sum_{\sigma_1^{T}\cdots\sigma_{k-1}^{T}}
\prod_{t=0}^{T-1}\ind\bigl(\traj(t+1) = f(\sigma_{\rroot}(t),\us
_{\partial\rroot}(t),u(t),\omega_{\rroot,t}) \bigr)
\nonumber\\
&&\hphantom{{}\times\sum_{\sigma_1^{T}\cdots\sigma_{k-1}^{T}}}
\qquad\quad{}
\times\mathop{\sum_{\SSS\subseteq\C}}_{|\SSS| \ge\lceil(k-1)/2
\rceil}
\prod_{i \in\SSS} \ind\bigl(\sigma_i^{T} = \cF^{T}(\us
_i(0),\traj^{T},\coinset_{i}^{T-1})\bigr)\\
&&\hspace*{160.8pt}{}\times\J_{i,{\mathrm{even}}}^d(\us_i(0),\sigma_\rroot^T,\coinset_{i}^{T})
\nonumber\\
&&\hphantom{{}\times\sum_{\sigma_1^{T}\cdots\sigma_{k-1}^{T}}}
\qquad\quad\hspace*{65.1pt}{}
\times\prod_{j \in\C-\SSS} \ind\bigl(\sigma_j^{T} = \cF^{T}(\us
_j(0),\traj^{T},\coinset_{j}^{T-1})\bigr)\nonumber\\
&&\qquad\quad\hspace*{150.6pt}{}\times\bigl(1-\J_{j,{\mathrm{even}}}^d(\us_j(0),\sigma_\rroot^T,\coinset
_{j}^{T})\bigr)\nonumber.
\end{eqnarray}
Using (\ref{eq:recursion_prob}), (\ref{eq:recursion_prob2})
and (\ref{eq:lb_recursion_ind}) in (\ref{eq:lb_recursion_defn})
and separating terms that depend only on $\us_i(0)$, we get
\begin{eqnarray*}
&&\Psi_{{\mathrm{odd}}}^{d+1}(\traj^{T}\midd u^{T})\\
&&\qquad =
\prob(\traj(0))
\sum_{\sigma_1^{T}\cdots\sigma_{k-1}^{T}}
\prod_{t=0}^{T-1} \ind\{\traj(t+1) = f(\sigma_{\rroot
}(t),\us_{\partial\rroot}(t)
,u(t),\omega_{\rroot,t}) \} \\
&&\qquad\quad
{} \times\mathop{\sum_{\SSS\subseteq\C}}_{|\SSS| \ge
\lceil(k-1)/2 \rceil} \biggl\{ \prod_{i \in\SSS} \sum_{\us_i(0)}
\prob(\us_i(0)) \ind\bigl(\sigma_i^{T} =
\cF^{T}(\us_i(0),\traj^{T},\coinset_{i}^{T-1})\bigr)\\
&&\hphantom{}\hspace*{139.3pt}
{}\times\J_{i,{\mathrm{even}}}^d(\us_i(0),\sigma_\rroot^T,\coinset
_{i}^{T}) \\
&&\qquad\quad\hphantom{{} \times\mathop{\sum_{\SSS\subseteq\C}}_{|\SSS| \ge
\lceil(k-1)/2 \rceil}\biggl\{}
{} \times\prod_{j \in\C-\SSS} \sum_{\us_j(0)}
\prob(\us_j(0)) \ind\bigl(\sigma_j^{T} =
\cF^{T}(\us_j(0),\traj^{T},\coinset_{j}^{T-1})\bigr)\\
&&\hspace*{204.5pt}
{}\times\bigl(1-\J_{j,{\mathrm{even}}}^d(\us_j(0),\sigma_\rroot^T,\coinset
_{j}^{T})\bigr) \biggr\} .
\end{eqnarray*}
Using the ``even'' versions of (\ref{eq:lb_recursion_defn}) and
(\ref{eq:lb_recursion_diff}), and noticing the symmetry in the
expression between the $k-1$ children, we recover
equation (\ref{eq:Alt_fp1_redef}).

Equation (\ref{eq:Alt_fp2_redef}) follows similarly, with the additional
$\ind(\sigma_\rroot(T)=-1)$ term appearing due to the modification in
(\ref{eq:lb_child_cores}).
\end{pf*}

\vspace*{-15pt}

\section{Proof of Lemma 4.3}
\label{app:RCexp}

Equation
(\ref{eq:CorrRecursion}) follows directly from (\ref{eq:cavity_def1})
and (\ref{eq:traj_prob}). We only
need to prove (\ref{eq:RespRecursion}).

Let $\I= \{0,\ldots,t\}$ and, for $\SSS\subset\I$, define the
rectangle $\R(\traj,\SSS,R,h)\subseteq\reals^{t+1}$ as the set of vectors
$\eta^{t}=(\eta(0),\ldots,\eta(t))$ such that
%
%
\begin{eqnarray}\qquad
\eta(r)+\sum_{s=0}^{r-1}R(r,s)\traj(s)+h(r)&=&0
\qquad\mbox{for all }
r \in\SSS,\\
\sign\Biggl(\eta(r)+\sum_{s=0}^{r-1}R(r,s)\traj(s)+h(r)
\Biggr)&=&\traj(r+1) \nonumber\\[-8pt]\\[-8pt]
&&\eqntext{\mbox{for all } r \in\I\setminus
\SSS.}
\end{eqnarray}
Equation (\ref{eq:asymp_dyn_def}) defines $\sigma(t+1)$ as a function of
$\sigma(0)$, $\eta^t$ and $h$. Let us denote this function by writing
$\sigma(t+1) = \Fs_{\sigma(t+1)}(\sigma(0),\eta^t;h)$:
\begin{eqnarray*}
\hspace*{-4pt}&&
\Ex_{C,R,h}[\sigma(t+1)] \\
\hspace*{-4pt}&&\qquad= \frac{1}{2}\sum_{\traj(0)\in\{\pm1\}}
\int_{\reals^{t+1}}
\Fs_{\sigma(t+1)}(\traj(0),\eta^t;h)
\phi_{0,C_t}(\eta^t) \prod_{i =0}^t\de\eta(i) \\
\hspace*{-4pt}&&\qquad= \frac{1}{2}\sum_{\traj^t\in\{\pm1\}^{t+1}} \int_{\reals
^{t+1}} \traj(t+1)
\phi_{0,C_t}(\eta^t)\\
\hspace*{-4pt}&&\hspace*{-30pt}\qquad\quad\hphantom{\frac{1}{2}\sum_{\traj^t\in\{\pm1\}^{t+1}} \int_{\reals
^{t+1}}}
{}\times\prod_{i=0}^t
\ind\Biggl\{\sign\Biggl(\eta(i)+\sum_{s=0}^{i-1}R(i,s)\traj(s)+h(i)
\Biggr)=\traj(i+1)
\Biggr\}\,
\de\eta(i) \\
\hspace*{-4pt}&&\qquad= \frac{1}{2} \sum_{\traj^t\in\{\pm1\}^{t+1}} \traj(t+1)
\int_{\R(\traj,\varnothing,R,h)} \phi_{0,C_t}(\eta^t) \prod_{i =0}^t
\de\eta(i) \\
\hspace*{-4pt}&&\qquad=\frac{1}{2} \sum_{\traj^t\in\{\pm1\}^{t+1}} \traj(t+1)
\Phi_{0,C_t}(\R(\traj,\varnothing,R,h)) .
\end{eqnarray*}
Since $C_t$ is strictly positive definite by Lemma
\ref{lemma:cavity_non_degenerate}, $x\mapsto\phi_{0,C_t}(x)$
is a continuous function.
By the fundamental theorem of calculus, we have
\begin{eqnarray*}
\frac{\partial\Phi_{0,C_t}(\R(\traj,\varnothing
,R,h))}{\partial h(s)}
\bigg|_{h=0} =
\cases{
\Phi_{0,C_t}(\R(\traj,\{s\},R,0)), &\quad if $\traj
(s+1)=+1$,\cr
- \Phi_{0,C_t}(\R(\traj,\{s\},R,0)), &\quad if $\traj(s+1)=-1$.}
\end{eqnarray*}
The definition of $R(t,s)$ for a cavity process in (\ref{eq:cavity_def2})
now leads to
\[
R(t+1,s) = \frac{1}{2}\sum_{\traj^{t+1} \in\{\pm1\}^{t+2}} \traj(t+1)
\traj(s+1) \Phi_{0,C_t}(\R(\traj,\{s\},R,0))
\]
for all $t\ge s\ge0$.
The result follows by the change $x_i' = x_i +\mu_i(\traj^t)$
in the Gaussian integral defining $\Phi$.

\section{Proof of Lemma 4.7}
\label{app:qminusptstar}

Throughout the proof, we let $T=T_*-1$. Equation (\ref{eq:qminusp2})
continues to hold. We rewrite it as
%
%
\begin{eqnarray}
\label{eq:SumOverR}
\qprob(\sigma_\rroot^{T+1}\midd u^{T+1})-
\prob(\sigma_\rroot^{T+1}\midd u^{T+1}) &=& \frac{1}{2}
\sum_{r=1}^{k-1}\D(r,k)+O\bigl(k^{-(T_*+1)/2}\bigr) ,\\[-12pt]\nonumber
\end{eqnarray}
\begin{eqnarray}
\label{eq:RthTerm}
\D(r,k) &\equiv&
\pmatrix{k-1\cr r} \sum_{\sigma_1^{T}\cdots\sigma_{r}^{T}} \prod
_{i=1}^r \{\qprob(\sigma_i^{T}\midd\sigma_\rroot^T)-
\prob(\sigma_i^{T}\midd\sigma_\rroot^T)\}
\nonumber\\[-8pt]\\[-8pt]
&&{}
\times\sum_{\sigma_{r+1}^{T}\cdots\sigma_{k-1}^{T}} \prod_{i=r+1}^{k-1}
\prob(\sigma_i^{T}\midd \sigma_\rroot^{T}) \prod_{t=0}^T
\K_{u(t)}\bigl(\sigma_{\rroot}(t+1)|\sigma_{\droot}(t)\bigr)
.\nonumber
\end{eqnarray}

Let $r_0=\lfloor\log k\rfloor$. Split the summation over $r$ in
(\ref{eq:SumOverR}) into two parts: the first for $1\le r\le r_0$,
the second for $r_0< r\le k-1$. We will first show that the second part is
of order $o(k^{-1/2})$. Indeed, by
Lemma \ref{lemma:qminusp}, we know that
$\qprob(\sigma_i^{T}\midd\sigma_\rroot^T)- \prob
(\sigma_i^{T}\midd\sigma_\rroot^T) \le B/k$ for some constant $B$
and all
$\sigma_i^{T}\in\{\pm1\}^{T+1}$. Using the fact
that the innermost sum in (\ref{eq:RthTerm}) is bounded by $1$,
we get
%
%
\begin{eqnarray}\qquad
\Biggl|\sum_{r=r_0+1}^{k-1} \D(r,k)\Biggr|&\le&
\sum_{r=r_0+1}^{k-1} \pmatrix{k-1\cr r} \sum_{\sigma_1^{T}\cdots
\sigma_{r}^{T}}
\prod_{i=1}^r \bigl|\qprob(\sigma_i^{T}\midd\sigma_\rroot^T)-
\prob(\sigma_i^{T}\midd\sigma_\rroot^T)\bigr|\\
\label{eq:LargeR}
&\le&\sum_{r=r_0+1}^{k-1}\pmatrix{k-1\cr r} \biggl(\frac
{2^{T+1}B}{k}\biggr)^r\nonumber\\[-8pt]\\[-8pt]
&\le&\sum_{r\ge\log(k)}\frac{1}{r!} (2^{T+1}B)^r
= o(k^{-1/2}) ,\nonumber
\end{eqnarray}
where the last estimate follows from standard tail bounds on Poisson random
variables.

We are left with the sum of $\D(r,k)$ over $r\in\{0,\ldots,r_0\}$.
As in Lemma \ref{lemma:qminusp}, let
\[
\SSS_t\equiv
\{\sigma_{r+1}^{T}\cdots\sigma_{k-1}^{T}\dvtx|\sigma
_{r+1}(t)+\cdots+\sigma_{k-1}(t)+u(t)|\le r_0\} .
\]
If $\sigma_{r+1}^{T}\cdots\sigma_{k-1}^{T}$ is
not in $\bigcup_{t=0}^T \SSS_t$, then the sum
over $\sigma_{1}^{T}\cdots\sigma_{r}^{T}$ is $0$ due to the
normalization of
$\qprob( \cdot\midd\sigma_\rroot^T)$ and
$\prob( \cdot\midd\sigma_\rroot^T)$
(the same argument was already used in the proof of Lemma \ref{lemma:qminusp}).
Restricting the innermost sum and letting as before $\Sh_{t_0}\equiv
\SSS_{t_o}\cap\{\bigcap_{t\neq t_0}\overline{\SSS}_t\} $ with
$\SSS_t$
defined as in (\ref{eq:Sdef}), we then have
%
%
\begin{eqnarray}\label{eq:DrkOneTie}
\D(r,k)
&=&
\pmatrix{k-1\cr r} \sum_{t_0=0}^T\sum_{\sigma_1^{T}\cdots\sigma
_{r}^{T}} \prod_{i=1}^r \{\qprob(\sigma_i^{T}\midd\sigma
_\rroot^T)-
\prob(\sigma_i^{T}\midd\sigma_\rroot^T)\}
\nonumber\\
&&\hphantom{\pmatrix{k-1\cr r} \sum_{t_0=0}^T}
{}\times\sum_{(\sigma_{r+1}^{T}\cdots\sigma_{k-1}^{T})\in\Sh_{t_0}}
\prod_{i=r+1}^{k-1}
\prob(\sigma_i^{T}\midd \sigma_\rroot^{T}) \\
&&\hspace*{132.6pt}{}\times\prod_{t=0}^T
\K_{u(t)}\bigl(\sigma_{\rroot}(t+1)|\sigma_{\droot}(t)\bigr)
 +
\ER(r,k) .\nonumber
\end{eqnarray}
By inclusion--exclusion, the error term is bounded as
\begin{eqnarray*}
|\ER(r,k)|&\le&\pmatrix{k-1\cr r}\sum_{t_1\neq t_2}\sum_{\sigma
_1^{T}\cdots\sigma_{r}^{T}} \prod_{i=1}^r \bigl|\qprob(\sigma
_i^{T}\midd\sigma_\rroot^T)-
\prob(\sigma_i^{T}\midd\sigma_\rroot^T)\bigr|
\\
&&\hphantom{\pmatrix{k-1\cr r}\sum_{t_1\neq t_2}}
{}
\times\sum_{(\sigma_{r+1}^{T}\cdots\sigma_{k-1}^{T})\in\SSS
_{t_1}\cap\SSS_{t_2}}\prod_{i=r+1}^{k-1}
\prob(\sigma_i^{T}\midd \sigma_\rroot^{T})\\
&&\hspace*{151.5pt}{}\times \prod_{t=0}^T
\K_{u(t)}\bigl(\sigma_{\rroot}(t+1)|\sigma_{\droot}(t)\bigr)\\
&\le&\pmatrix{k-1\cr r}\sum_{t_1\neq t_2}\sum_{\sigma_1^{T}\cdots
\sigma_{r}^{T}} \prod_{i=1}^r \bigl|\qprob(\sigma
_i^{T}\midd\sigma_\rroot^T)-
\prob(\sigma_i^{T}\midd\sigma_\rroot^T)\bigr| \frac
{Br_0^2}{k}\\
&\le&\pmatrix{k-1\cr r}T^2 2^{Tr}\biggl(\frac{B}{k}\biggr)^r
\frac{Br_0^2}{k} .
\end{eqnarray*}
The first inequality follows by applying Lemma
\ref{thm:sum_approx} to the $N=k-r-1\ge k-\log(k)-1$ i.i.d. random vectors
$(\sigma_{r+1})^{T},\ldots, (\sigma_{k-1})^{T}$, which are nondegenerate
for all $k$ large enough by Lemma \ref{lemma:RCconvergence},
and summing over the values of $a_{t_1}= \sum_{i=r+1}^{k-1}\sigma
_i(t_1)+u(t_1)$
and $a_{t_2}= \sum_{i=r+1}^{k-1}\sigma_i(t_2)+u(t_2)$, with
$|a_{t_1}|, |a_{t_2}|\le r_0$. The second inequality is instead implied by
Lemma \ref{lemma:qminusp}. It is now easy to sum over $r$ to get
\[
\Biggl|\sum_{r=1}^{r_0}\ER(r,k)\Biggr|\le
\sum_{r=0}^{\infty}\frac{1}{r!}T^2(2^TB)^rB \frac{(\log k)^2}{k} =
o(k^{-1/2})
.
\]
Therefore, the error terms $\ER(r,k)$ can be neglected.

Let us now consider the main term in (\ref{eq:DrkOneTie}),
and define
\begin{eqnarray*}
&&J'_{t_0}(\sigma_\rroot^{T},(\sigma_{1})_0^{T},\ldots,(\sigma_{r})_0^{T})\\
&&\qquad\equiv\sum_{(\sigma_{r+1}^{T}\cdots\sigma_{k-1}^{T})\in\Sh_{t_0}}
\prod_{i=r+1}^{k-1}
\prob(\sigma_i^{T}\midd \sigma_\rroot^{T}) \prod_{t=0}^T
\K_{u(t)}\bigl(\sigma_{\rroot}(t+1)|\sigma_{\droot}(t)\bigr) .
\end{eqnarray*}
We now proceed exactly as in the proof of Lemma \ref{lemma:qminusp},
cf. (\ref{eq:Jdef}) to (\ref{eq:Jestimate}) with
$\Omega(t)=\sigma_\rroot(t+1)(\sum_{i=1}^r \sigma_i(t))$
and $r_0= \log(k)$. Notice Theorem \ref{thm:sum_approx} continues
to hold and $r_0$ times
the $O(k^{-1/4})$ error is still $o(1)$. We arrive at
\[
J_{t_0} = \frac{1}{\sqrt{k}}
\sigma_{\rroot}(t_0+1)\Biggl(\sum_{i=1}^{r}\sigma_i(t_0)\Biggr)
J_{t_0}^* \bigl(1+\widetilde{\ER}_{t_0}(k)\bigr) ,
\]
where $\widetilde{\ER}_{t_0}(k) \rightarrow0$ as $k \rightarrow
\infty$
for any fixed $t_0$.

If we use this estimate in (\ref{eq:DrkOneTie}), we get
\begin{eqnarray*}
\D(r,k)
&=&\pmatrix{k'\cr r} \sum_{t_0=0}^T\sum_{\{\sigma_i^{T}\}} \prod
_{i=1}^r \{\qprob(\sigma_i^{T}\midd\sigma_\rroot^T)-
\prob(\sigma_i^{T}\midd\sigma_\rroot^T)\} \frac
{J^*_{t_0}}{\sqrt{k}}
\sigma_{\rroot}(t_0+1)\\
&&\hphantom{\pmatrix{k'\cr r} \sum_{t_0=0}^T\sum_{\{\sigma_i^{T}\}} \prod
_{i=1}^r}
{}\times\sum_{i=1}^{r} \sigma_i(t_0)
\bigl(1+ \widetilde{\ER}_{t_0}(k) \bigr) + o(k^{-1/2}) \\
&=& r \pmatrix{k'\cr r} \sum_{t_0=0}^T \sum_{\{\sigma_i^{T}\}}
\prod_{i=1}^r \bigl(\qprob(\sigma_i^{T}\midd\sigma_\rroot^T)-
\prob(\sigma_i^{T}\midd\sigma_\rroot^T)\bigr)
\frac{J^*_{t_0}}{\sqrt{k}} \sigma_\rroot(t_0+1)\\
&&\hphantom{\pmatrix{k'\cr r} \sum_{t_0=0}^T\sum_{\{\sigma_i^{T}\}} \prod
_{i=1}^rr}
{}\times\sigma_1(t_0)
\bigl(1+ \widetilde{\ER}_{t_0}(k)\bigr)+ o(k^{-1/2}) ,
\end{eqnarray*}
where $k'\equiv k-1$ and we
used the symmetry among the vertices $\{1,\ldots,r\}$
to replace $(\sum_{i=1}^{r} \sigma_i(t))$ by $r\sigma_1(t)$.
If $r\ge2$, the sums over $(\sigma_2)_0^T,\ldots,\sigma_r^T$
vanish except for the error terms $\widetilde{\ER}_{t_0}(k)$
[once more by the normalization
of $\prob( \cdot\midd\sigma_\rroot^T)$ and
$\qprob( \cdot\midd\sigma_\rroot^T)$]. We need to bound
contribution of
such error terms. Find $M$ such that
$ |\qprob(\sigma_i^{T}\midd(\sigma_\rroot)_0^T)- \prob
(\sigma_i^{T}\midd\sigma_\rroot^T)| \le M/k$.
We have
%
%
\begin{eqnarray}\label{eq:qminusp_rgt2_err}
&& \biggl| r\pmatrix{k-1\cr r} \sum_{\sigma_1^{T}\cdots\sigma
_{r}^{T}} \{\qprob(\sigma_i^{T}\midd\sigma_\rroot^T)-
\prob(\sigma_i^{T}\midd\sigma_\rroot^T)\} \widetilde{\ER
}_{t_0}(k) \biggr|\nonumber\\
&&\qquad\le r\biggl(\frac{(k-1)e}{r} \biggr)^r 2^T \biggl(\frac{M}{k}
\biggr)^r
|\widetilde{\ER}_{t_0}(k) |\nonumber\\[-8pt]\\[-8pt]
&&\qquad\le r \biggl(\frac{2^TeM}{r} \biggr)^r |\widetilde{\ER}_{t_0}(k)|
\nonumber\\
&&\qquad\le
\biggl(\frac{M'}{2^r}\biggr)|\widetilde{\ER}_{t_0}(k)|\nonumber
\end{eqnarray}
for suitable $M'$. Here we have used the standard bound $ {n\choose m}
\le(\frac{ne}{m} )^m$.
Summing (\ref{eq:qminusp_rgt2_err}) over $t_0$ and $r$, we see that
${\sum_{r=2}^{r_0}}|\D(k,r)| \le C|J^*_{t_0}\widetilde{\ER
}_{t_0}(k)|/\sqrt{k}= o(k^{-1/2})$.

Further,
\begin{eqnarray*}
&&\sum_{\sigma_{1}^{T}}
\sigma_1(t)\{\qprob(\sigma_1^{T}\midd\sigma_\rroot^T)-
\prob(\sigma_1^{T}\midd\sigma_\rroot^T)\}\\
&&\qquad=\sum_{\sigma_{1}^{t}} \sigma_1(t)\{\qprob(\sigma
_1^{t}\midd\sigma_\rroot^t)- \prob(\sigma_1^{t}\midd\sigma
_\rroot^t)\}\\
&&\qquad= 2\frac{\bias_{t}}{k^{(T_*-t+1)/2}} \bigl( 1 + o(1)\bigr) ,
\end{eqnarray*}
where the second equality follows by Lemma \ref{lemma:qminusp}. Note
that for $t<T_*-1$, this sum is $o(k^{-1})$. As a consequence, only the
$t_0=T$ term
is relevant in the sum over~$t_0$.

Using these two remarks, we finally obtain
\begin{eqnarray*}
\sum_{r=1}^{r_0}\D(k,r) & = & \D(k,1) + o(k^{-1/2}) \\
& = & k \sum_{t_0=0}^T
\frac{J^*_{t_0}}{\sqrt{k}} 2\frac{\bias_{t_0}}{k^{(T_*-t_0+1)/2}}
\sigma_\rroot(t_0+1)
\bigl(1+o(1)\bigr) + o(k^{-1/2})\\
& = & 2 \frac{\bias_{T_*-1}}{k^{1/2}} \sigma_\rroot(T_*)
I_{T_*-1}(\sigma_\rroot^{T_*-1}) \bigl(1+o(1)\bigr) ,
\end{eqnarray*}
which, together with (\ref{eq:LargeR}) and (\ref
{eq:SumOverR}), proves our thesis.
Equation (\ref{eq:sumqminusptstar}) follows as in the previous
lemma.
%

\section{Proof of the local central limit theorem}
\label{app:CLT}

The proof repeats the arguments of \cite{McDonald2},
while keeping track explicitly of error terms. We will therefore
focus on the differences with respect to \cite{McDonald2}.
We will indeed prove a result that is
slightly stronger than Theorem \ref{thm:sum_approx}.
Apart from a trivial rescaling, the statement below differs from
Theorem \ref{thm:sum_approx} in that we allow for larger
deviations from the mean.
\begin{theorem}\label{thm:sum_approx2}
Let $X_1,\ldots, X_N$ be i.i.d.
vectors $X_i=(X_{i,1},X_{i,2},\ldots,X_{i,d})$ $\in\{0,1\}^d$ with
%
%
\begin{equation}
\biggl|\prob\{X_{1,\ell}=1\}-\frac{1}{2}\biggr|\le\frac{B}{\sqrt
{N}}
\end{equation}
for $\ell\in\{1,\ldots, d\}$. Further
assume $\prob\{X_i=s\}\ge1/B$ for all $s\in\{0,1\}^d$.\vspace*{1pt}

Let $a\in\Z^d$ be such that $\sup_i|a_i-N/2|\le B\sqrt{N}$, and define,
for a partition $\{1,\ldots,d\}=\I_0\cup\I_+$,
\begin{eqnarray*}
A(a,\I) &\equiv& \{z \in\Z^d \dvtx z_i=a_i \ \forall i \in\I_0,
z_i\ge a_i \ \forall i \in\I_+\} ,\\
\A_{\infty}(a,\I) &\equiv& \bigl\{z \in\reals^d \dvtx z_i=a_i/\sqrt
{N} \
\forall i \in\I_0,
z_i\ge a_i/\sqrt{N} \ \forall i \in\I_+\bigr\} .
\end{eqnarray*}
Let $p_N$ be the distribution of $S_N=\sum_{i=1}^N X_i$. Then, there
exists a finite constant $\const=\const(B,d)$ such that for $K \equiv
|\I_0|$,
%
%
\begin{eqnarray}
\biggl|F(a,\I) -\frac{1}{N^{K/2}} \Phi_{\sqrt{N}\Ex X_1,
\Cov(X_1)}(\A_{\infty}(a,\I))\biggr|&\le&\frac{\const
(B,d)}{N^{(K+(K+1)^{-1})/2}},\nonumber\\[-8pt]\\[-8pt]
F(a,\I) &\equiv&\sum_{y \in A(a,\I) } p_N(y).\nonumber
\end{eqnarray}
\end{theorem}

Since $\Phi_{\sqrt{N}\Ex X_1, \Cov(X_1)}(\A_{\infty}(a, \I))$
is bounded away from $0$ for $B$ bounded, the error estimate in the last
statement is equivalent to the one in Theorem
\ref{thm:sum_approx}.
For $K=0$ our claim is implied by the multi-dimensional Berry--Esseen theorem
\cite{BR}, and we will
therefore focus on $K\ge1$.

Recall that the Bernoulli decomposition of \cite{McDonald2} allows to
write, for $S_N=(S_{N,1},\ldots,S_{N,d})$ and $r\in\{1,\ldots,d\}$
%
%
\begin{equation}
S_{N,r} = Z_{N,r} + \sum_{i=1}^{M_{N,r}} L_{i,r},
\end{equation}
where $Z_{N}$ is a lattice random variable,
$M_{N,r}\sim$Binom$(N,q_r)$ for $r=1,\ldots, d$, and $\{L_{i,r}\}$
is a collection of i.i.d. Bernoulli$(1/2)$ random variables
independent from $Z_N$ and $M_N$. Finally, it is easy to check that
$q_r\ge1/(Bd)$.

We have the following key estimate.
\begin{lemma}\label{lemma:LipCLT}
There exists a numerical constant $\widehat{\const}$ such that,
for any $a,b\in\Z^d$
%
%
\begin{equation}
|F(a,\I)-F(b,\I)|\le\widehat{\const}\biggl(\frac
{Bd}{N}\biggr)^{(K+1)/2}
\|a-b\| ,
\end{equation}
where \mbox{$\|\cdot\|$} denotes the $L^1$ norm.
\end{lemma}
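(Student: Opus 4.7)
My plan is to bound the discrete gradient of $F(\cdot, \I)$ coordinate-by-coordinate and then sum via telescoping. Along a monotone lattice path from $a$ to $b$ with $\|a - b\|$ unit steps, the triangle inequality reduces the lemma to the estimate $|F(a + e_j, \I) - F(a, \I)| \le C(Bd/N)^{(K+1)/2}$ for every $j \in \{1, \dots, d\}$, uniformly for $a$ in the regime of Theorem \ref{thm:sum_approx2}.

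To prove this pointwise bound I would use the Bernoulli decomposition $S_N = Z_N + L(M_N)$ directly. Conditioning on $(M_N, Z_N) = (m, z)$ and using independence of $L(m)_r$ across coordinates,
$$F(a, \I) = \E\!\left[\, \prod_{r \in \I_0} p(a_r - Z_{N,r}; M_{N,r}) \prod_{r \in \I_+} P_{\ge}(a_r - Z_{N,r}; M_{N,r}) \right],$$
with $p(k; m) = \binom{m}{k}/2^m$ and $P_\ge(k; m) = \sum_{\ell \ge k} p(\ell; m)$. If $j \in \I_+$, the discrete antiderivative identity $P_\ge(k; m) - P_\ge(k + 1; m) = p(k; m)$ converts one tail factor of size $O(1)$ into a point-mass factor of size $O(m_j^{-1/2})$, gaining exactly one factor of $(Bd/N)^{1/2}$ beyond the baseline of $K$ such factors. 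If $j \in \I_0$, the binomial ratio identity $p(k+1; m)/p(k; m) = (m - k)/(k + 1)$ gives $p(k + 1; m) - p(k; m) = p(k; m)\,(m - 2k - 1)/(k + 1) = O(1/m)$ uniformly in the regime $|k - m/2| = O(\sqrt{m})$, again providing one extra factor of $(Bd/N)^{1/2}$ compared with the baseline.

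Finally, it remains to verify that the typical regime $|k - m/2| = O(\sqrt{m})$ captures essentially all contributions. Since $M_{N,r} \sim \textup{Binom}(N, q_r)$ with $q_r \ge 1/(Bd)$, a Chernoff bound yields $M_{N,r} \ge cN/(Bd)$ off an event of exponentially small probability; standard concentration of $Z_N$ on scale $\sqrt{N}$, combined with the hypothesis $|a_r - N/2| \le B\sqrt{N}$, places $a_r - Z_{N,r}$ within $O(\sqrt{M_{N,r}})$ of $M_{N,r}/2$ on a set of probability $1 - e^{-cN}$. Contributions from the complementary event are bounded crudely using $p \le 1$, $P_\ge \le 1$ and absorbed into the constant. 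The main obstacle I expect is exactly this bookkeeping on the atypical set --- checking that the exponential tails of $M_N$ and $Z_N$ dominate the crude uniform bounds on $p$ and $P_\ge$ when some coordinate of $a - Z_N$ is far from the center. Once this is handled, the two identities above reduce the pointwise estimate to an elementary computation.
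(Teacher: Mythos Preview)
Your approach is essentially the paper's: both condition on $(M_N,Z_N)$ via the Bernoulli decomposition and reduce to elementary estimates on the binomial point mass $p(k;m)=\binom{m}{k}/2^m$. The paper packages your two coordinate-wise identities into a single inequality
\[
\Big|\sum_{x\in A(a,\I)}r_m(x)-\sum_{x\in A(b,\I)}r_m(x)\Big|\le \frac{C_*}{\min_i (m_i)^{(K+1)/2}}\,\|a-b\|,
\]
attributed to Lemma~2.2 of \cite{McDonald2}, and then averages over $M_N$.

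The one simplification you are missing is that the binomial estimates hold \emph{uniformly} in the argument: $\sup_k p(k;m)\le C/\sqrt{m}$ and $\sup_k |p(k+1;m)-p(k;m)|\le C/m$, with no restriction to $|k-m/2|=O(\sqrt{m})$. (For the second, write $|p(k+1;m)-p(k;m)|=p(k;m)\,|m-2k-1|/(k+1)$; for $|k-m/2|\le m/4$ combine $p(k;m)\le Cm^{-1/2}e^{-c(k-m/2)^2/m}$ with the linear factor, and for $|k-m/2|>m/4$ use that $p(k;m)$ is exponentially small.) Consequently your conditional bound on $|F(a+e_j,\I)-F(a,\I)|$ given $M_N=m$ holds for \emph{every} $a\in\Z^d$ and is independent of $Z_N$. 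This eliminates all the bookkeeping you anticipate: no concentration of $Z_N$ is needed, no restriction to the regime of Theorem~\ref{thm:sum_approx2} is needed, and the lemma follows for all $a,b\in\Z^d$ as stated. The only tail estimate required is for $M_N$, to control $\E[(\min_r M_{N,r})^{-(K+1)/2}]$, which is handled exactly as you describe.
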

\begin{pf}
As in \cite{McDonald2}, we let, for $x,m\in\Z^d$,
%
%
\begin{equation}
r_m(x) \equiv\prod_{i=1}^d\frac{1}{2^{m_i}}\pmatrix{m_i\cr x_i} ,
\end{equation}
be the probability mass function of the
vector $\Lambda_{m}
\equiv(\sum_{i=1}^{m_1}L_{i,1},\ldots, \sum_{i=1}^{m_d}L_{i,d})$.
It then follows immediately that
%
%
\begin{equation}
\biggl|\sum_{x\in A(a,\I)}r_m(x) - \sum_{y\in A(b,\I)}r_m(y)
\biggr|\le\frac{\widetilde{\const}}{\min_i(m_i)^{(K+1)/2}}\|a-b\|
\end{equation}
for some numerical constant $\widetilde{\const}$. This is a slight
generalization
of Lemma 2.2 of \cite{McDonald2}, and follows again immediately from
the same estimates on the combinatorial coefficients used
in \cite{McDonald2}.

We then proceed analogously to the proof of Theorem 2.1 of \cite{McDonald2},
namely, for $h\in\Z^d$,
\begin{eqnarray*}
\hspace*{-4pt}&& \sup_{a\in\Z^d}|F(a+h,\I)-F(a,\I)|\\
\hspace*{-4pt}&&\qquad\le\sup_{a\in\Z^d}
\sum_{m\in\Z^d}\prob\{M_N=m\}\bigl|\prob\{S_N\in A(a,\I)| M_N=m\}\\
\hspace*{-4pt}&&\qquad\quad\hphantom{\sup_{a\in\Z^d}
\sum_{m\in\Z^d}\prob\{M_N=m\}\bigl|}\hspace*{-3pt}{}
-\prob\{S_N\in A(a+h,\I)| M_N=m\}\bigr|\\
\hspace*{-4pt}&&\qquad=\sup_{a\in\Z^d}
\sum_{m\in\Z^d}\prob\{M_N=m\}\bigl|\prob
\{Z_{N}+\Lambda_{m}\in A(a,\I)
| M_N=m\}\\
\hspace*{-4pt}&&\qquad\quad\hphantom{\sup_{a\in\Z^d}
\sum_{m\in\Z^d}\prob\{M_N=m\}\bigl|}\hspace*{-3pt}{}
-\prob\{Z_{N}+\Lambda_{m}\in A(a+h,\I)
| M_N=m\}\bigr|\\
\hspace*{-4pt}&&\qquad\le\sup_{a\in\Z^d}
\sum_{m\in\Z^d}\prob\{M_N=m\} \sum_{l\in\Z^d}\prob\{Z_N=l\}\\
&&\qquad\quad\hphantom{\sup_{a\in\Z^d}
\sum_{m\in\Z^d}\prob\{M_N=m\} \sum_{l\in\Z^d}}
{}\times
\bigl|\prob
\{\Lambda_{m}\in A(a-l,\I)
| M_N=m\}\\
\hspace*{-4pt}&&\qquad\quad\hphantom{\sup_{a\in\Z^d}
\sum_{m\in\Z^d}\prob\{M_N=m\}\bigl|}\hspace*{65pt}\hspace*{-35.7pt}{}
-\prob\{\Lambda_{m}\in A(a+h-l,\I)
| M_N=m\}\bigr|\\
\hspace*{-4pt}&&\qquad\le\sum_{m\in\Z^d} \frac{\widetilde{\const}}{\min
_i(m_i)^{(K+1)/2}}\|h\|,
\end{eqnarray*}
which is bounded as in the statement by the same argument used in
\cite{McDonald2}.
\end{pf}

We are now in a position to prove Theorem \ref{thm:sum_approx2}.
\begin{pf*}{Proof of Theorem \ref{thm:sum_approx2}}
For $a$ as in the statement and $\ell> 0$, let
\begin{eqnarray*}
R(a,\ell) &=& \{z\in\Z^d\dvtx|z_i-a_i|\le\ell\ \forall i\in
\I_0,
z_i = a_i\ \forall i\in\I_+\} ,\\
\R_{\infty}(a,\ell) &= &\bigl\{z\in\reals^d\dvtx\bigl|z_i-a_i/\sqrt
{N}\bigr|\le\ell/\sqrt{N}
\ \forall i\in\I_0,
z_i \geq a_i/\sqrt{N}\ \forall i\in\I_+\bigr\}.
\end{eqnarray*}
Then, by Lemma \ref{lemma:LipCLT}, there exists a constant $\const_1(B,d)$
such that
%
%
\begin{equation}\label{eq:CLTFin1}
\biggl|F(a,\I)-\frac{1}{|R(a,\ell)|}\sum_{z\in R(a,\ell)}F(z,\I
)\biggr|
\le\frac{\const_1(B,d)\ell}{N^{(K+1)/2}} .
\end{equation}
On the other hand, by the Berry--Esseen theorem,
%
%
\begin{equation}\label{eq:CLTFin2}\qquad
\biggl|\sum_{z\in R(a,\ell)}F(z,\I)-
\int_{\R_{\infty}(a,\ell) }\Phi_{\sqrt{N}\Ex X_1, \Cov(X_1)}
(\A_{\infty}(z,\I)) \,\de
z\biggr|
\le\frac{\const_2(d)}{N^{1/2}} .
\end{equation}
Finally,\vspace*{1pt} it is easy to see that
$\Phi_{\sqrt{N}\Ex X_1, \Cov(X_1)}(\A_{\infty}(z,\I
))$ is
Lipschitz continuous in $z$ with Lipschitz constant bounded uniformly
in $N$,
whence
%
%
\begin{eqnarray}\label{eq:CLTFin3}
&&\biggl|\Phi_{\sqrt{N}\Ex X_1, \Cov(X_1)}
(\A_{\infty}(a,\I))\nonumber\\
&&\quad{}-\frac{1}{|\R_{\infty}(a,\ell)|}
\int_{\R_{\infty}(a,\ell) }\Phi_{\sqrt{N}\Ex X_1, \Cov(X_1)}
(\A_{\infty}(z,\I))\,\de
z\biggr|\\
&&\qquad\le\frac{\const_3\ell}{\sqrt{N}} .\nonumber
\end{eqnarray}
The\vspace*{1pt} proof is completed by putting together (\ref{eq:CLTFin1}),
(\ref{eq:CLTFin2}) and (\ref{eq:CLTFin3}), using
$|R(a,\ell)|=\Theta(\ell^{K})$, $|R_{\infty}(a,\ell)|=\Theta(\ell
^{K}N^{-K/2})$,
and setting $\ell=N^{K/(2K+2)}$.
\end{pf*}
\end{appendix}

%

%
\printaddresses

\end{document}